\definecolor{green}{rgb}{0,0.8,0} 
\newtheorem{theorem}{Theorem}[section]
\newtheorem{lemma}[theorem]{Lemma}
\newtheorem{proposition}[theorem]{Proposition}
\theoremstyle{definition}
\theoremstyle{remark}
\newtheorem{remark}[theorem]{Remark}
\numberwithin{equation}{section}
\newcommand{\abs}[1]{\vert#1\vert^2}
\newcommand{\pa}{\partial}
\newcommand{\na}{\nabla}
\newcommand{\Del}{\Delta}
\newcommand{\ep}{\epsilon}
\newcommand{\la}{\lambda}
\newcommand{\Om}{\Omega}
\newcommand{\ps}{\psi}
\newcommand{\bbR}{\mathbb R}
\newcommand{\bbr}{\mathbb R}
\newcommand{\calP}{\mathcal P}
\newcommand{\PP}{\mathcal P}
\newcommand{\ddt}{\frac{d}{dt}}
\newcommand{\vp}{\varphi}
\newcommand{\dvg}{\na \cdot}
\newcommand{\dv}{\na \cdot}
\newcommand{\be}{\begin{equation}}
\newcommand{\ee}{\end{equation}}
\newcommand{\ben}{\begin{equation*}}
\newcommand{\een}{\end{equation*}}
\newcommand{\bsp}{\begin{split}}
\newcommand{\esp}{\end{split}}
\newcommand{\fn}[1]{\frac{ #1}{N}}
\newcommand{\fnn}[1]{\frac{ #1}{\sqrt{N}}}
\newcommand{\lr}[1]{\left( #1 \right)}
\newcommand{\ipa}{\pa_y^{i}}
\newcommand{\jpa}{\pa_z^{j}}
\newcommand{\Fn}{\left(\fn{1} \right)}
\begin{document}

\title[]{Nonlinear stability of planar traveling waves\\ 
in a chemotaxis model of tumor angiogenesis
 \\   with chemical diffusion }

\subjclass[2010]{92B05, 	35K45}%
\keywords{tumor, angiogenesis, chemotaxis,  Keller-Segel, nonlinear stability, 2D infinite cylinder, planar traveling wave, infinite strip, chemical diffusion, Cole-Hopf transform}

\author{Myeongju Chae}%
\address{Department of Mathematics, Hankyong University, Anseong-si, Gyeonggi-do, Republic of Korea}%
\email{mchae@hknu.ac.kr}
\author{Kyudong Choi}%
\address{Department of Mathematical Sciences, Ulsan National Institute of Science and Technology, Ulsan, Republic of Korea}%
\email{kchoi@unist.ac.kr}
\subjclass{}%
\keywords{}%

\date{\today}%
\begin{abstract}
We consider
a simplified  chemotaxis model of   tumor angiogenesis, described by  a  Keller-Segel system 
on  the two dimensional infinite cylindrical domain
$ (x, y) \in  
  \bbr \times {\mathbf S^{\la}}$, 
where $ \mathbf S^{\la}$ is the circle of perimeter $\la>0$. The domain  models a virtual channel where newly generated blood vessels toward the vascular endothelial  growth factor  will be located.
The system is known to allow planar traveling wave solutions of an invading type. In this paper, we establish the nonlinear stability of these traveling invading waves  when   chemical diffusion     is present if  $\la$ is sufficiently small.
The same result for the corresponding system in one-dimension was  obtained by  Li-Li-Wang (2014) \cite{LiLiWa}. Our result solves the problem remained open in  \cite{CCKL} at  which only linear stability of  the waves was obtained  under certain artificial assumption. 
\end{abstract}
\maketitle
\section{Introduction}\label{sec_intro}
\subsection{A Keller-Segel system}

The formation of new blood vessels from pre-existing vessels, which is so-called angiogenesis, is the essential mechanism for tumour progression and metastasis. Focusing on the interaction between endothelial cells and growth factor, a simplified model of   tumor angiogenesis  can be described by the following Keller-Segel system \cite{FrTe,Le,Pe}:
 \begin{align}\label{KS} \begin{aligned}
\pa_t n - \Del n& = - \na \cdot (n \chi(c) \na c ) \\
\pa_t c - \ep\Del c & = - c^m n. 
\end{aligned} \end{align} 
\indent We consider the above system in two-dimension   with a front boundary condition in $x$   and a periodic condition in $y$, both specified later,
with $m >0$ and $\ep>0$.  
In a general Keller-Segel context, the unknown  $n(x, y, t)> 0 $ is the bacterial density while
the unknown  $c(x,y, t)> 0$ is the concentration of chemical nutrient consumed by bacteria at position $(x,y)$ and time $t$. Considering formation of new blood vessels, $n$ denotes the density of endothelial cells while $c$ does the concentration of  the protein known as the vascular endothelial  growth factor (VEGF).   
The chemosensitivity function  $\chi(\cdot): \bbr^+ \to  \bbr^+ $ is a given decreasing    function, 
reflecting that the chemosensitivity gets lower as the  concentration of the chemical gets higher. 
 The positive constant $\epsilon>0$ is the diffusion rate constant for the chemical substance  $c$ while 
$m$ indicates  the consumption rate of nutrient $c$.  
 
\indent
 When we model  endothelial angiogenesis,   we interpret that the  endothelial cells
behave as an invasive species, responding to signals produced by the hypoxic tissue.  Accordingly, we choose the $x$-axis by the propagating direction and 
 the system \eqref{KS} is given the front condition at left-right ends such that
\begin{align}\label{nfront}
\lim_{x\to -\infty} n (x, y, t) = n_- >0 ,  \quad  \lim_{x\to \infty} n(x, y, t) = 0,\\
\lim_{x\to -\infty} c (x, y, t) = 0,  \quad  \lim_{x\to \infty} c(x, y, t) = c_+ > 0. \label{cfront}
\end{align} To all functions in this paper, we impose  the periodic condition in $y$-variable of  period $\la>0$.

\indent
A \textit{planar}  traveling wave solution of \eqref{KS} is a traveling wave solution 
independent of the transversal direction $y$: 
\begin{align}\label{twave}
 n(x,y,t) =N(x-st), \quad c(x, y, t) = C(x-st)
\end{align}
with  a given wave speed $s>0$ which we always assume positive  in this paper without loss of generality.    We consider only  waves $(N, C)$ satisfying the boundary conditions \eqref{nfront} and \eqref{cfront} which means
\begin{equation}\label{bdry_nc}
\lim_{x\to -\infty}  N (z) =n_->0,\quad \lim_{x\to + \infty}  C  (z) =c_+>0,\quad \lim_{x\to +\infty}  N(z) = \lim_{x\to - \infty}  C (z) =0.
\end{equation}
We also assume  that
\begin{equation}\label{bdry_derivative_nc}
\lim_{x\to \pm \infty}  N'(z) = \lim_{x\to \pm \infty}  C' (z) =0.
\end{equation}
\noindent
To have a traveling wave, it is known that 
  that the chemosensitivity function $\chi( c)$  needs to be singular near  $c=0$  (\textit{e.g.} see \cite{KSb, Sc}). In the paper \cite{KSb}, $\chi(c)= c^{-1}$, which yields  the logarithmic singularity $(\chi(c)\nabla c) = \nabla\ln(c)$, is assumed, which choice of  $\chi(c)$  is then adopted on modeling the formation of the vascular network toward cancerous cells (\textit{e.g.} see \cite{FrTe,Le,Pe}). The existence of traveling wave solution with an invading front might  be an evidence of the tumor encapsulation (\textit{e.g.} see \cite{AC, BT, Sh}).\\

\indent
In this paper,  we consider only the case    $\chi(c)= c^{-1}$ and $m =1$ of \eqref{KS}:
\begin{align}\label{eq:main}
 \begin{aligned}
\pa_t n - \Del n& = - \na \cdot \left(n \frac{ \na c}{c} \right), \\
\pa_t c-\ep\Del c  & = - cn, \quad (x, y,t) \in \bbr\times  \mathbf{S}^{\la}\times  \bbr_+
\end{aligned} \end{align} where $ \mathbf{S}^{\la}
$ is the circle of perimeter $\la>0.$ This 2D cylindrical domain  would be understood as  a virtual channel where newly generated blood vessels toward the chemical (VEGF)   will be located. We   focus on establishing 
  the time asymptotic stability of a planar traveling wave solution $(N, C)$ of \eqref{eq:main}.
 The restriction on $m =1$ is required for  treating the singularity of $ 1/c$ by the Cole-Hopf transformation
\begin{equation}
\label{CH}  p := - \na \ln c = -\frac{\na c}{c}= -(\frac{\pa_x  c}{c},\frac{\pa_y c}{c}).
\end{equation}    A  well-written   explanation of the system including biological interpretation can be found  in \cite{Rosen1, Rosen2} (also refer to  \cite{PeWa} and the references therein).

\subsection{A parabolic system of conservation laws}
By  the Cole-Hopf transform, we translate the singular Keller-Segel system \eqref{eq:main} into the following system of $(n, p)=(n, (p_1,p_2))$  without singularity:
 \begin{align}\label{nq} \begin{aligned}
&\pa_t n -\Del n  =  \na \cdot (np) , \\
&\pa_t p -  \ep \Del p   =  -2\ep (p \cdot \na ) p +\na n, \quad (x, y,t) \in \bbr\times  \mathbf{S}^{\la}\times  \bbr_+
\end{aligned} \end{align}
with the notation $ ((p\cdot \na)p )_i = \sum_{k=1,2} p_k \pa_k p_i$.

By denoting  
 \begin{equation}\label{CP} \calP:=-C'/C\quad\mbox{and } P:=(\calP,0),
\end{equation} 
we have a planar traveling wave solution $(N, P)=(N, (\calP,0))$ of \eqref{nq}   of speed $s$ with the boundary conditions  inherited from those of $(N, C)$. The existence and some properties of those waves $(N,C)$ and   $(N, \calP)$ can be found in  \cite{Wa, LiWa}. We put some of the results on the waves we need in Subsection \ref{subsec_existence_traveling}.

\indent
 The study on  the existence of traveling wave solutions of a Keller-Segel model was  initiated by the paper \cite{KSb} then many works followed (see \cite{Ho1} and the references therein). {We also refer to  the    survey paper \cite{Wa_survey} which is an excellent exposition of the topic.}
 The existence of  traveling waves 
 with the  front conditions  \eqref{nfront} and \eqref{cfront} 
can be found
 in \cite{WaHi} for $\ep=0$,  and \cite{LiWa, Wa} for $\ep>0$.  
When considering the one dimensional system (i.e. no $y$-dependency in \eqref{eq:main}), the nonlinear   stability results were shown 
 in a weighted Sobolev space
  in \cite{JinLiWa} for $\ep=0$, and \cite{LiLiWa} when $\ep>0$ is small (also see \cite{PeWa}).  The weighted Sobolev space has commonly appeared when studying nonlinear stability of viscous shocks of conservations laws since 
  \cite{KaMa} (also see \cite{Ni}).
  
  \indent The study of higher dimensional traveling waves   is a very interesting topic and  remains open for many questions including existence and stability of such waves as indicated in  
\cite{Wa_survey}. As a special case in 2D,   planar waves  for an   infinite cylinder $\mathbb{R}\times \mathbf{S}^\la$ was considered  by \cite{CCKL} following the spirit of  the nonlinear energy estimates  developed in \cite{JinLiWa} for the whole line $\mathbb{R}$ case.  In angiogenesis, one may consider that a blood vessel in our body has a 2D cylinder structure. 

\indent
   {The previous result \cite{CCKL} mainly proved two things: one is the nonlinear stability for $\ep=0$   and the other is  the  stability of the linearized equation for small $\ep>0$    under the additional  mean-zero assumption   in transversal direction $y$ for some technical reason.  In addition to these two results, Theorem 1.6 in \cite{CCKL} gives  a hint  why studying planar waves is natural instead of doing general 2D traveling waves  by showing that the $y$-derivative of any smooth solution $(n,p)$ decays to zero in $L^2$-sense under certain additional assumption.}

\indent
In this paper, we show that  traveling wave solutions $(N, P)$ of {the nonlinear system  \eqref{nq} are globally stable under the smallness assumption on the parameters  $\ep>0$ and  $\la>0$ without the artificial  mean-zero assumption   in transversal direction $y$, which was needed in \cite{CCKL} even for the corresponding linearized system. Indeed, the main estimate \eqref{main_est}  holds uniformly for small $\ep>0$  when the antiderivative $(\vp,\psi)$ of a perturbation of the form $(n-N,p-P)=(\dv\vp,\na\psi)$ } is sufficiently small   in a weighted Sobolev space (see \eqref{np_perturb} and \eqref{w_sobolev}). 
Our result can be considered as an extension of \cite{CCKL} into $\ep>0$ case and an extension of \cite{LiLiWa} into 2D case.
See Theorem \ref{theoremnc} and Subsection \ref{subsec_main_thm} for the precise set-up. 
We state the stability result in terms of the perturbation of $(n, p)$ in Theorem \ref{theoremnc}, then explain the implication of the theorem for the perturbation of $(n,c)$ to $(N, C)$ in Remark \ref{remark_nc}.

 \indent
 At first glance, the transformed $(n, p)$-system \eqref{nq} seems simpler than the $(n, c)$-system \eqref{eq:main}
 to analyze since this parabolic system \eqref{nq} of conservation laws does not  have the logarithmic singularity. As a price for this, however,  the quadratic nonlinear term   $2\ep (p \cdot \na ) p$ appears, 
 and it is not clear at all if 
{the linear term $2\ep P\cdot\na\psi$ in the main perturbation equation \eqref{main_eq} produced by the nonlinear term  $2\ep (p \cdot \na ) p$ in \eqref{nq}
  can be controlled by the diffusion term  $\ep\Del\psi$  in \eqref{main_eq} produced by the diffusion term $\ep \Del p$ in \eqref{nq}}.
  
  \indent {In this regard, the main obstacle   is to handle the quantity $\ep\int_0^t \|\sqrt{\calP'}\psi\|^2$
 in \eqref{eq_lemma0_-}, which is  the time integral of a  localized $L^2$-norm of $\psi$ multiplied by $\ep$.  We overcome the difficulty thanks to {certain } dissipations of a localized $L^2$-norm of $\vp$ (not of $\psi$)  together with a careful manipulation done in Lemma \ref{lemma0_temp} (see Remark \ref{rem_difficulty}). In doing so, we need   the smallness assumption on  $\ep>0$.} This idea was first used in \cite{LiLiWa} for the  one-dimensional system while for  our  two dimensional system, it becomes more delicate due to the non-symmetric nature of the main perturbation equation \eqref{main_eq} on the propagating {direction $x$} and {the} transversal {direction $y$}. { For instance, when we denote $\vp=(\vp^1,\vp^2)$, we see the non-symmetric term
$\int\frac{\PP}{N}\vp^1(\vp^2)_y$ in  \eqref{quad_control_0-}. The smallness condition on the chemical diffusion constant $\ep$ might be understood in the sense that  
   the chemical in angiogenesis often diffuses in the dense    network of extracellular matrix and tissues  which are almost static as mentioned in \cite{Wa_survey}.}
 
  \indent {
  Unfortunately, we also need the smallness assumption on the perimeter $\la>0$ of a 2D infinite cylinder, and it appears due to a technical reason in our proof. In fact, with wave speed  $s>0$,  we ask   the product $s\cdot\la$ to be smaller than a given absolute constant (see \eqref{smallness_lemma0_-_la}). This condition enables us to employ Poincar\'{e} inequality \eqref{ineq:poincare2} in the transversal direction $y$ in order to control a non-localized   $L^2$-norm of $\vp$ (see \eqref{quad_control_0-} and \eqref{use_poincare})). In our opinion, it is very challenging      to remove this technical  smallness assumption  on $\la>0$.}

   \indent
  For the Cauchy problem of \eqref{KS}, we refer to \cite{CPZ1, CPZ2, FoFr, FrTe, LiLi}, where \cite{CPZ1, CPZ2} prove the existence of a global weak solution, and \cite{LiLi} proves the existence of  a global classical solution considering  the zero chemical diffusion case in a multi-dimension.  When a bounded domain is considered, a boundary layer may appear. We refer to \cite{HoLiWa} and \cite{PeWaZhZh} for the stability questions of the layer.

\indent
The remaining parts of the paper are organized as follows. In Subsection \ref{subsec_existence_traveling},
 we introduce background materials including the existence and some  properties of traveling wave solutions and state the main result (Theorem \ref{theoremnc}) with its set-up in Subsection \ref{subsec_main_thm}.   In Subsection \ref{hopf}, we  state  the local existence of a perturbative solution and  its  \textit{a priori} uniform-in-time estimate. In Section \ref{sec_three_main}, we prove the uniform-in-time   estimate. The zero-th and first order estimates (Subsection \ref{sec_zero} and \ref{sec_first}) are  the essential part. Then,  the higher order estimate (Lemma \ref{lemma23_}) can be obtained in a similar way. We present its proof  for completeness in Subsection \ref{sec_higher}.

\section{Main theorem and background materials}
\begin{subsection}{Existence and Properties of traveling wave solutions}\label{subsec_existence_traveling}

\indent We collect some results on   traveling wave solutions $(N, C)$ and $(N,P)=(N, (\calP,0))$ with the  front conditions introduced in Section \ref{sec_intro}.

We first observe that a traveling wave solution $(N, C)$ defined by \eqref{twave}  solve the following ODE system by plugging the expression \eqref{twave} into \eqref{eq:main}:
\begin{align} \begin{aligned} \label{NC}
-sN' -  N'' &= -  ( \frac{C'}{C} N)',\\
-s C' -  \ep C'' &= - C N.
\end{aligned} \end{align}
\begin{theorem}[\cite{Wa} Lemma 3.2, Lemma 3.4]\label{thm_NC}
A monotone 
 solution of \eqref{NC} for $\ep>0$ with the boundary conditions \eqref{bdry_nc} and \eqref{bdry_derivative_nc} exists if the relation $$n_- = (1+\ep)s^2$$ holds. More precisely  it holds that 
\begin{enumerate}
\item $N' <0$ and $C'>0$,
\item
$N(z)\sim e^{-sz}$, as $z\rightarrow \infty$,
 and
\item $\displaystyle \lim_{z\to -\infty} \frac{C'(z)}{C(z)} =  s$ and $\displaystyle \lim_{z\to \infty} \frac{C'(z)}{C(z)} = 0$.
\end{enumerate}
\end{theorem}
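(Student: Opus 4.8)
The plan is to construct the wave by a phase--plane analysis of the travelling--wave ODE in the Cole--Hopf variable and then to translate the orbit back to $(N,C)$. Substituting the planar ansatz \eqref{twave} with $p=(\calP,0)$ and $z=x-st$ into the transformed system \eqref{nq} gives
\[
-sN'-N''=(N\calP)',\qquad -s\calP'-\ep\calP''=-2\ep\calP\calP'+N'.
\]
Each side is an exact $z$--derivative; integrating once and using the decay of $N,N',\calP,\calP'$ at $z=+\infty$ (inherited from \eqref{bdry_nc} and \eqref{bdry_derivative_nc} through \eqref{CP}) to kill the integration constants, I obtain the autonomous planar system
\[
N'=-N(s+\calP),\qquad \ep\calP'=\ep\calP^{2}-s\calP-N.
\]
Conversely, any orbit of this first--order system with the right endpoints integrates back: setting $C(z):=c_+\exp\!\big(\int_z^{\infty}\calP\big)$ makes $\calP=-C'/C$, and a short computation confirms that $(N,C)$ then solves \eqref{NC} and satisfies \eqref{bdry_nc} and \eqref{bdry_derivative_nc}; moreover $C'=-\calP\,C$, so $C$ increases exactly where $\calP<0$. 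It therefore suffices to produce a heteroclinic orbit of the planar system joining its two relevant equilibria.

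Next I would classify those equilibria. The state at $z=+\infty$ is $(0,0)$, whose linearisation has eigenvalues $-s$ and $-s/\ep$, so it is a stable node. The state at $z=-\infty$ must have $\calP=-s$ (since $\calP=-C'/C\to s$), and $(n_-,-s)$ is an equilibrium of the planar system if and only if $(1+\ep)s^{2}-n_-=0$ --- which is precisely the hypothesis $n_-=(1+\ep)s^{2}$. There the linearisation has negative trace and negative determinant, so $(n_-,-s)$ is a saddle with a one--dimensional unstable manifold, and one checks that its unstable eigenvector points into the open rectangle $R:=(0,n_-)\times(-s,0)$.

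The main step --- and the one I expect to be the most delicate --- is to show that the branch of the unstable manifold entering $R$ stays in $\overline{R}$ for all $z$ and converges to $(0,0)$ as $z\to+\infty$. I would verify that the vector field points strictly inward on the three open edges of $R$: $\ep\calP'=-N<0$ on $\{\calP=0\}$, $\ep\calP'=n_--N>0$ on $\{\calP=-s\}$, and $N'=-n_-(s+\calP)<0$ on $\{N=n_-\}$, while $\{N=0\}$ is invariant; hence $\overline{R}$ is a trapping region, and inspecting the nullclines shows its only equilibria are $(0,0)$ and $(n_-,-s)$. Since $N'=-N(s+\calP)<0$ throughout the interior of $R$, the function $N$ is strictly decreasing along the orbit, which excludes periodic orbits in $R$ (one may even use $N$ itself as the independent variable); by the Poincar\'e--Bendixson theorem the orbit must then tend to $(0,0)$. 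The resulting profile is automatically monotone --- $N'<0$ and, since $\calP\in(-s,0)$ along the orbit, $C'=-\calP\,C>0$ --- which is part~(1); and as only one branch of the unstable manifold enters $R$, the wave is unique up to translation.

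Finally I would read off the asymptotics. From $\calP(-\infty)=-s$ and $\calP(+\infty)=0$ we obtain $C'/C=-\calP\to s$ as $z\to-\infty$ and $C'/C\to0$ as $z\to+\infty$, which is part~(3). For part~(2), convergence to the hyperbolic sink $(0,0)$ is exponential, so $\calP$ is integrable at $+\infty$; integrating $N'/N=-s-\calP$ then yields $N(z)\sim e^{-sz}$ as $z\to\infty$. Higher regularity of the profiles and the remaining limit relations in \eqref{bdry_derivative_nc} are standard ODE consequences.
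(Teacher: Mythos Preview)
Your phase--plane argument is correct and complete: the reduction to the planar system $N'=-N(s+\calP)$, $\ep\calP'=\ep\calP^{2}-s\calP-N$ is right, the equilibrium classification is accurate, the trapping--region check on $\partial R$ holds line by line, and the monotonicity of $N$ along the orbit kills periodic orbits so Poincar\'e--Bendixson forces convergence to the sink. The asymptotics for $N$ and $C'/C$ follow as you say.

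Your route, however, differs from the one the paper attributes to \cite{Wa}. There the existence of $(N,C)$ is obtained by reducing the second--order system \eqref{NC} to a single scalar equation of KPP--Fisher type (roughly, eliminating $N$ via the $C$--equation and recognising the resulting equation for $C$ or for $C'/C$ as a Fisher wave problem), and then invoking the classical KPP existence theory. What you have done instead is closer in spirit to \cite{LiWa} (Theorem~\ref{thm_NP} in the present paper): you pass first to the Cole--Hopf variable $\calP=-C'/C$ and run a direct phase--plane analysis on $(N,\calP)$, then translate back to $(N,C)$. The KPP reduction has the advantage of tapping into a well--developed theory (and gives the minimal--speed relation $n_-=(1+\ep)s^{2}$ essentially for free from the KPP speed condition), while your approach is self--contained, makes the monotonicity $N'<0$, $\calP'>0$ transparent, and simultaneously yields the $(N,\calP)$ wave needed later in the paper without a separate argument.
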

In \cite{Wa}, the author  used the results of the KPP-Fisher equation to establish the above theorem.
 \indent

The relation $ \calP = - C'/C$ gives the system
 \begin{align}\label{NP}\begin{aligned}
 -sN' -  N'' &=   (\calP
 N)',\\
-s \calP' -  \ep \calP'' &= (N-\ep\calP^2)'.
\end{aligned}\end{align}
We observe that $(N,P)=(N, (\calP,0))$ is a traveling wave solution of \eqref{nq}.
 From  \eqref{bdry_nc}  with the  above theorem, the wave $(N, \calP)$ is given the boundary condition 
 \begin{equation}\label{zeroth}
N(-\infty) =  (1+\ep)s^2,\, N(+\infty)=0,\, \calP(-\infty) = -s , \,\calP(+\infty)=0,\, N'(\pm\infty)=\PP'(\pm\infty)=0.
\end{equation}   We abbreviate $\displaystyle \lim_{z \to \pm \infty} f(z)$ by $f(\pm \infty)$ for any function $f$ on $\mathbb{R}$.
Moreover, the following theorem holds.
\begin{theorem}[\cite{LiWa} Theorem 2.1, or \cite{LiLiWa}]\label{thm_NP}
For $s>0$, if $\ep>0$ is small, then there exists a monotone 
solution  $(N,\calP)$ to \eqref{NP} with the boundary condition \eqref{zeroth}. In particular, it satisfies 
$0<N<
(1+\ep)s^2$ with
$N'  <0$ and $-s<\calP<0$ with $\calP'>0$, and   it is unique up to a translation.
\end{theorem}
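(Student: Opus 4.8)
The plan is to integrate \eqref{NP} once, reduce to a planar autonomous first-order system, and then exhibit the wave as a heteroclinic orbit of that system. Integrating both equations of \eqref{NP} from $z$ to $+\infty$ and using the limits and the vanishing of derivatives in \eqref{zeroth}, the second derivatives drop out and one obtains
\begin{align}\label{reduced_NP}
N' &= -(s+\calP)N,\\
\ep\calP' &= \ep\calP^2 - s\calP - N.
\end{align}
Conversely, a bounded orbit of \eqref{reduced_NP} with the right endpoints solves \eqref{NP}. The rest states of \eqref{reduced_NP} are $E_+:=(N,\calP)=(0,0)$, $E_-:=((1+\ep)s^2,-s)$ and $(0,s/\ep)$, and only the first two lie in the box $B:=\{N\ge0,\ -s\le\calP\le0\}$; so a solution of \eqref{NP}--\eqref{zeroth} is precisely a heteroclinic orbit of \eqref{reduced_NP} running from $E_-$ at $z=-\infty$ to $E_+$ at $z=+\infty$ and staying in $B$.

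Next I would record the linearizations at the endpoints. At $E_+$ the Jacobian of \eqref{reduced_NP} has eigenvalues $-s$ and $-s/\ep$, so $E_+$ is a stable node; at $E_-$ the Jacobian has determinant $-(1+\ep)s^2/\ep<0$, so $E_-$ is a saddle with a one-dimensional unstable manifold $W^u(E_-)$. The wave, if it exists, must lie on $W^u(E_-)$. To produce the connection and read off the qualitative properties, I would use the $\calP$-nullcline $\Gamma=\{N=\ep\calP^2-s\calP\}$, which for $-s<\calP<0$ is a strictly decreasing arc joining $E_-$ to $E_+$, together with the region below it,
\[
\calD:=\{(N,\calP):\ -s<\calP<0,\ 0<N<\ep\calP^2-s\calP\}.
\]
A direct check of the vector field on $\partial\calD$ gives: on $\Gamma$ the flow is vertical and points downward, hence into $\calD$; the segment $\{N=0\}$ is invariant and carries $\calP'=\calP(\ep\calP-s)/\ep>0$; and on $\{\calP=-s\}$ one has $N'=0$ while $\calP'=\big((1+\ep)s^2-N\big)/\ep>0$, so the flow again enters $\calD$. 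Thus $\calD$ is positively invariant, and throughout $\calD$ we have $N'<0$ and $\calP'>0$.

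Consequently any orbit in $\calD$ is strictly monotone in both components and bounded, so it converges forward to a rest state of \eqref{reduced_NP} in $\overline{\calD}$; the only candidates are $E_\pm$, and since $N$ is strictly decreasing the limit must be $E_+$. Applying this to the $N$-decreasing branch of $W^u(E_-)$ (which leaves $E_-$ with $N$ decreasing and $\calP$ increasing) yields at once: existence of the heteroclinic; its monotonicity $N'<0$, $\calP'>0$; the strict bounds $0<N<(1+\ep)s^2$ and $-s<\calP<0$; and uniqueness up to a translation in $z$, since $W^u(E_-)$ is one-dimensional and only one of its two branches can remain in $B$.

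The single step that requires $\ep$ small --- and, I expect, the real content of the argument --- is the claim that the $N$-decreasing branch of $W^u(E_-)$ actually enters $\calD$, i.e.\ leaves $E_-$ below $\Gamma$ rather than above it (above $\Gamma$ one has $\calP'<0$, and the monotone picture collapses). This amounts to comparing the unstable eigendirection at $E_-$ with the tangent to $\Gamma$ there, whose slope in the $(\calP,N)$-plane is $dN/d\calP=-(1+2\ep)s$: a short computation with the positive eigenvalue $\mu_+$ of the Jacobian at $E_-$ shows that the unstable direction is the steeper of the two exactly when $\ep$ is small, and that this can fail for $\ep$ large. A more structural way to see the same thing is to read \eqref{reduced_NP} as a slow--fast system: the critical manifold $\{N+s\calP=0\}$ is normally hyperbolic and attracting (there $\partial_\calP(\ep\calP^2-s\calP-N)=2\ep\calP-s\approx-s<0$), the reduced flow on it is the logistic equation $N'=-sN(1-N/s^2)$ with the explicit heteroclinic $N(z)=s^2/(1+e^{sz})$ --- which is exactly the $\ep=0$ traveling wave --- and Fenichel's theorem furnishes a nearby locally invariant slow manifold $\calM_\ep$ carrying an $O(\ep)$-perturbation of that flow, whose heteroclinic (with endpoints $E_\pm$ lying on $\calM_\ep$) persists for small $\ep>0$. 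In either presentation the crux is making this entry/persistence statement rigorous uniformly in small $\ep$; the rest is the routine trapping-region analysis sketched above.
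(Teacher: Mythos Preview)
The paper does not prove Theorem~\ref{thm_NP}; it is quoted verbatim from \cite{LiWa,LiLiWa} and used as a black box.  So there is nothing to compare against in this paper itself.

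Your argument is correct and is essentially the one given in the cited references.  The reduction to the planar system \eqref{reduced_NP} is right, the identification of $E_-$ as a saddle and $E_+$ as a stable node is right, and the trapping region $\calD$ is correctly chosen: on $\Gamma$ one has $\calP'=0$, $N'<0$; on $\{\calP=-s\}$ one has $N'=0$, $\calP'>0$; and $\{N=0\}$ is invariant.  Inside $\calD$ the monotonicity $N'<0$, $\calP'>0$ is immediate, and any bounded monotone orbit must run to $E_+$.  Uniqueness up to translation follows from $\dim W^u(E_-)=1$.

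The one place you should tighten is the entry of $W^u(E_-)$ into $\calD$, which you correctly flag as the only step needing $\ep$ small.  Your eigenvector comparison is right in spirit: at $E_-$ the unstable eigenvalue expands as $\mu_+=s(1-2\ep)+O(\ep^2)$, so the unstable direction has slope $dN/d\calP=-(1+\ep)s^2/\mu_+=-s(1+3\ep)+O(\ep^2)$, which is strictly steeper than the nullcline slope $-(1+2\ep)s$ for small $\ep>0$.  Writing this one-line expansion out explicitly would make the argument self-contained and remove any need to invoke Fenichel theory, which is heavier machinery than the problem requires (though your singular-perturbation reading is correct and recovers the known $\ep=0$ wave $N=s^2/(1+e^{sz})$ on the critical manifold $\{N=-s\calP\}$).
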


The next lemma gives a uniform estimate of $N$ and $\PP$ for any small $\ep$. 
\begin{lemma}\label{lem_NP}
 
  For  $s>0$, there exist  constants   $\ep_1>0$ and  $L\in\mathbb{R}$ such that    if $(N,\calP)$ is a solution of \eqref{NP} and  \eqref{zeroth} for some $\ep\in(0,\ep_1)$
 given by Theorem \ref{thm_NP}, then 
  $$ |{N^{ (k)}}| \leq L, \quad   |{\calP^{(k)}}| \leq L,\quad \mbox{ for } 0\leq k\le 2,\quad \mbox{ and } $$
 $$
\Big|(\frac{1}{N})'\Big|+\Big|(\frac{1}{N})''
\Big|\leq \frac{L}{N},\quad 
\Big|(\frac{1}{\sqrt {N}})'\Big|\leq \frac{L}{\sqrt {N}}.$$
 
\end{lemma}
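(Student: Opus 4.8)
The goal is to establish uniform-in-$\ep$ bounds on $N,\calP$ and their derivatives (up to second order) together with the weighted bounds on $1/N$ and $1/\sqrt N$. The plan is to exploit the ODE system \eqref{NP} together with the qualitative information from Theorem \ref{thm_NP} ($0<N<(1+\ep)s^2$, $N'<0$, $-s<\calP<0$, $\calP'>0$), and the sharp exponential decay rates that come from linearizing \eqref{NP} at the two endpoints $z=\pm\infty$. First I would record the zeroth-order bounds: $0\le N\le (1+\ep_1)s^2$ and $|\calP|\le s$ already give $L$-bounds on $N,\calP$ uniform in $\ep\in(0,\ep_1)$, after shrinking $\ep_1$ if necessary.

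Next I would handle the first and second derivatives. Here the trick is to recast \eqref{NP} in integrated (first-order) form. Integrating the first equation of \eqref{NP} from $z$ to $+\infty$ and using \eqref{zeroth} gives $sN + N' = -\calP N$, i.e.
\be\label{plan_N1}
N' = -(s+\calP)N,
\ee
which immediately yields $|N'|\le 2sL$ and, since $s+\calP\in(0,s)$, also shows $N$ decays at rate $e^{-sz}$ at $+\infty$ (consistent with Theorem \ref{thm_NC}) while staying bounded below away from $-\infty$. Similarly, integrating the second equation of \eqref{NP} from $-\infty$ to $z$ and matching boundary values produces a relation of the form $s(\calP+s) + \ep\calP' = N - \ep\calP^2 - n_- = N-\ep\calP^2-(1+\ep)s^2$; solving for $\ep\calP'$ and using the zeroth-order bounds gives $\ep|\calP'|\le CL$, hence $\calP'$ itself is controlled once we know $\ep$ is bounded below — but we must be careful, since we want a bound uniform as $\ep\to 0$. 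So instead I would argue: from \eqref{NP} the quantity $\calP'$ satisfies a first-order linear ODE with bounded, $\ep$-uniformly controlled coefficients (after dividing by $\ep$ one sees the structure of a singular perturbation), and the monotone profile connecting $-s$ to $0$ forces $\calP'$ to be $O(1)$ with an exponential layer only possibly at $-\infty$ — but the known decay rate there is $\calP+s\sim e^{s z/(\text{something})}$, controlled because the linearization eigenvalue stays bounded. Differentiating \eqref{plan_N1} gives $N''=-(s+\calP)N'-\calP' N$, so once $N',\calP'$ are $L$-bounded so is $N''$; differentiating the $\calP$-equation once more and using the already-established bounds gives the $L$-bound on $\calP''$.

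For the weighted estimates, I would use \eqref{plan_N1} directly: $(1/N)' = -N'/N^2 = (s+\calP)/N$, and since $0<s+\calP<s\le L$ this gives $|(1/N)'|\le L/N$; differentiating again, $(1/N)'' = \calP'/N + (s+\calP)(1/N)' = \calP'/N + (s+\calP)^2/N$, which is bounded by $L/N$ using the $L$-bound on $\calP'$. Finally $(1/\sqrt N)' = -\tfrac12 N'/N^{3/2} = \tfrac12 (s+\calP)/\sqrt N$, bounded by $L/\sqrt N$.

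The main obstacle is getting the bound on $\calP'$ (and $\calP''$) \emph{uniform} in $\ep$ as $\ep\to 0^+$: the second equation of \eqref{NP} is singularly perturbed (the $\ep\calP''$ term degenerates), so one must rule out a boundary layer in $\calP$ of width $O(\ep)$ where $\calP'$ would blow up like $1/\ep$. The resolution is that the reduced ($\ep=0$) profile $\calP=-C'/C$ from Theorem \ref{thm_NC} is already smooth with bounded derivative, and the monotonicity plus the explicit endpoint linearizations (eigenvalues $-s$ and, at $-\infty$, a value bounded away from $0$ uniformly in small $\ep$) show the $\ep>0$ profiles are uniform $C^2$-perturbations of it; one makes this rigorous either by a continuity/bootstrap argument on $[0,\ep_1]$ using the integrated form of \eqref{NP}, or by citing the construction in \cite{LiWa,LiLiWa} where these profiles are built precisely as perturbations of the $\ep=0$ wave. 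Once that uniformity is in hand, all the remaining estimates are the routine algebraic manipulations sketched above.
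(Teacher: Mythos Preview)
The paper's own ``proof'' of this lemma is a two-line citation to \cite{LiLiWa} (for the bounds on $N^{(k)},\calP^{(k)}$ with $k\le 1$) and to Lemma~4.3 of \cite{CCKL} (for the rest), so there is little to compare against directly. Your computations for the weighted bounds are correct and are exactly the argument one finds in \cite{CCKL}: once \eqref{plan_N1} (which is \eqref{NP_relation} in the paper) is in hand, $(1/N)'=(s+\calP)/N$, $(1/N)''=\calP'/N+(s+\calP)^2/N$, and $(1/\sqrt N)'=\tfrac12(s+\calP)/\sqrt N$ follow by direct differentiation, and the $L/N$ and $L/\sqrt N$ bounds are immediate from $0<s+\calP<s$ and the $L$-bound on $\calP'$.

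Your integration of the $\calP$-equation from $-\infty$ has an algebraic slip; the clean relation (integrating from $+\infty$, as the paper does in the proof of Lemma~\ref{lem_center}) is $-s\calP-\ep\calP'=N-\ep\calP^2$, i.e.\ $\ep\calP'=-(N+s\calP)+\ep\calP^2$. You correctly identify the genuine obstacle: this only gives $\ep|\calP'|\le C$, and to get $|\calP'|\le L$ uniformly as $\ep\to 0^+$ one must know $N+s\calP=O(\ep)$, which is precisely the statement that the $\ep>0$ profile is a regular (not singular) perturbation of the $\ep=0$ profile $N=-s\calP$. You are right that this is where the real work lies and that it is supplied by the construction in \cite{LiLiWa}; the paper itself simply cites that reference for this step, so your proposal is at least as complete as the paper's proof and takes the same route.
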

 \begin{proof}
 The estimate for $k\leq 1$ in the first line was proved   in \cite{LiLiWa} while the proof of the rest can be found in  Lemma $4.3$ in \cite{CCKL}.
\end{proof}

Lastly, we need the following lemma which gives a point in $\mathbb{R}$ contained both in the
   transition layer of $N$ and in that of  $\PP$.

\begin{lemma}\label{lem_center}
 
   For any $s>0$, there exists a constant $\ep_1>0$ such that if $(N,\calP)$ is a solution of \eqref{NP} and  \eqref{zeroth} for some $\ep\in(0,\ep_1)$
 given by Theorem \ref{thm_NP}, then there exists a point $z_0\in\bbR$ satisfying 
$$\calP(z_0)=-\frac{s}{2}\quad\mbox{ and }\quad  N(z_0)\geq \frac{s^2}{4}.$$
 
\end{lemma}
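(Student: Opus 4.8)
The plan is to pin down $z_0$ with the intermediate value theorem and then simply read off $N(z_0)$ from an exact algebraic relation between $N$, $\calP$ and $\calP'$ that comes from integrating the second equation of \eqref{NP}. First, by Theorem \ref{thm_NP} the function $\calP$ is continuous, strictly increasing, with $\calP(-\infty)=-s$ and $\calP(+\infty)=0$, so there is a (unique) $z_0\in\bbR$ with $\calP(z_0)=-s/2$; in particular $-s<\calP(z_0)<0$. It remains to estimate $N(z_0)$ from below.

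To do that, integrate the second equation of \eqref{NP}, $-s\calP'-\ep\calP''=(N-\ep\calP^2)'$, over $(z,+\infty)$. By the boundary data in \eqref{zeroth} (namely $\calP(+\infty)=0$, $\calP'(+\infty)=0$, $N(+\infty)=0$), the fundamental theorem of calculus gives, for every $z\in\bbR$,
\[
s\,\calP(z)+\ep\,\calP'(z)=-N(z)+\ep\,\calP(z)^2 ,
\qquad\text{i.e.}\qquad
N(z)=-s\,\calP(z)+\ep\,\calP(z)^2-\ep\,\calP'(z).
\]
Evaluating at $z=z_0$ and using $\calP(z_0)=-s/2$ together with $\calP'>0$ (Theorem \ref{thm_NP}) yields
\[
N(z_0)=\frac{s^2}{2}+\ep\,\frac{s^2}{4}-\ep\,\calP'(z_0)\ \ge\ \frac{s^2}{2}-\ep\,\calP'(z_0).
\]
Finally, by Lemma \ref{lem_NP} there exist $\ep_1'>0$ and $L$, depending only on $s$, such that $|\calP'|\le L$ for all $\ep\in(0,\ep_1')$; setting $\ep_1:=\min\{\ep_1',\,s^2/(4L)\}$ we obtain $\ep\,\calP'(z_0)\le \ep L\le s^2/4$ whenever $\ep\in(0,\ep_1)$, hence $N(z_0)\ge s^2/2-s^2/4=s^2/4$, as claimed.

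Honestly, I do not expect a real obstacle here: the content is the identity $N=-s\calP+\ep\calP^2-\ep\calP'$, which is immediate once one notices that the second equation of \eqref{NP} is already in divergence form and that all relevant quantities vanish at $+\infty$. The only points needing a little care are (i) checking that the decay in \eqref{zeroth} legitimately kills all boundary terms at $+\infty$ in the integration, and (ii) invoking the $\ep$-\emph{uniform} bound on $\calP'$ from Lemma \ref{lem_NP} rather than a solution-dependent one, so that the threshold $\ep_1$ can be chosen uniformly (and independently of the particular translate of the wave).
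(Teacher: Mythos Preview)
Your proof is correct and follows essentially the same route as the paper: both use the intermediate value theorem to locate $z_0$, integrate the second equation of \eqref{NP} (using the vanishing at $+\infty$ from \eqref{zeroth}) to obtain the identity $N=-s\calP+\ep\calP^2-\ep\calP'$, and then invoke the uniform bound $|\calP'|\le L$ from Lemma~\ref{lem_NP} to choose $\ep_1$ small enough. The only cosmetic difference is that you retain the nonnegative term $\ep\,\calP(z_0)^2=\ep s^2/4$ one line longer before discarding it.
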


 \begin{proof}
 
  Since $\PP$ is continuous on $\bbR$ and $\PP(-\infty)=-s, \PP(+\infty)=0$, there    exists a point $z_0\in\bbR$ such that
$\calP(z_0)=-\frac{s}{2}$. To show $ N(z_0)\geq \frac{s^2}{4}$ for sufficiently small $\ep$, recall the equation \eqref{NP}. From 
$N(+\infty)=\PP(+\infty)=0$, we have 
\begin{align*} \begin{aligned}  
-s \calP -  \ep \calP' = (N-\ep\calP^2)
\end{aligned} \end{align*} 
 Assume that $\ep_1>0$ is smaller than $\ep_1$ in Lemma \ref{lem_NP}. Then for any $\ep\in(0,\ep_1)$, we get 
\begin{align*} \begin{aligned}  
N(z_0)=-s \calP(z_0)  +\ep((\calP(z_0))^2 -\calP'(z_0))
\geq \frac{s^2}{2}-\ep
|\calP'(z_0)|
\geq \frac{s^2}{2}-\ep_1L
\end{aligned} \end{align*} where $L$ is the constant from Lemma \ref{lem_NP}.
We take $\ep_1>0$ small enough to have 
$\ep_1
L
\leq \frac{s^2}{4}$.
Then $N(z_0)\geq \frac{s^2}{4}$ for any $\ep\in(0,\ep_1)$.
\end{proof}
 
 \begin{remark}\label{transition layer} The lemma is due to the fact 
 $N=-s\PP+\ep \mathcal{O}(1),$ which  means  the transition layers of  $N$ is overlapped with that of $\PP$  in some extent when $\ep$ is small enough.
\end{remark}
 
 \begin{remark} The first equation in  \eqref{NP} with \eqref{bdry_derivative_nc} and \eqref{zeroth} gives the simple relation between $N$ and $\calP$: 
 \begin{equation}\label{NP_relation}
 \frac{-N'}{N}=s+\calP.
 \end{equation}
 
  \end{remark}
   \begin{remark} 
If we denote $w(\cdot)=\frac{1}{N(\cdot)}$, then the above lemma implies
 \be\begin{split}\label{rem_center}
&\frac{w'(z)}{w(z)}\geq \frac{s}{2}\quad \mbox{for } z\geq z_0\quad \mbox{and}\quad w(z)\leq \frac{4}{s^2}\leq \frac{16}{s^4} N \quad \mbox{for } z\leq z_0.
 \end{split}\ee
 	Indeed, for $z\geq z_0$, we have 
 	$$\frac{w'}{w}=\frac{(1/N)'}{1/N}
=\frac{-N'/N^2}{1/N}=\frac{-N'}{N}=s+P\geq s+P(z_0)=\frac{s}{2}  
 	$$ thanks to \eqref{NP_relation} and $\calP'>0$. For $z\leq z_0$, we have 
 	$$
 	w=\frac{1}{N}\leq \frac{1}{N(z_0)}\leq \frac{4}{s^2}= \frac{16}{s^4}\cdot\frac{s^2}{4}\leq \frac{16}{s^4}N(z_0)\leq \frac{16}{s^4}N(z)  
 	$$ due to $N'<0$.
 We will use \eqref{rem_center} in the proof of Lemma \ref{lemma1_2}.
 \end{remark}
 
 Figure   $1$ describes the above discussions including
monotonicity of waves.
\begin{figure}[h]\label{figure_monotone}
\begin{center}
\includegraphics[scale=0.7]{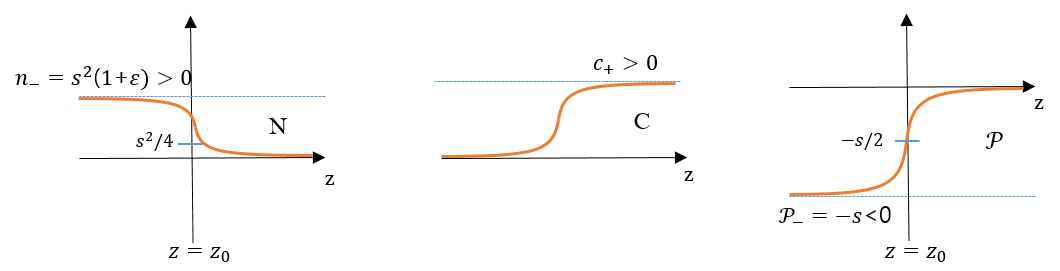}
\caption{ Monotonicity of $N$, $C$ and $\mathcal P$.}
\end{center}
\end{figure}

\end{subsection}

\begin{subsection}{Main theorem}\label{subsec_main_thm}
\indent
We recall \eqref{nq}:
 \begin{align}\label{np_main_eq} \begin{aligned}
\pa_t n - \Del n& = \na \cdot (np) \\
\pa_t p - \ep \Del p & =  -2\ep (p \cdot \na ) p + \na n,\qquad
 ( x, y,t) \in \bbr \times \mathbf{S}^\la\times  \bbr_+.
\end{aligned} \end{align}
Let   $(N,P)=(N,(\calP,0))$  be a traveling wave solution 
  of \eqref{np_main_eq}   with \eqref{zeroth}.  
In the below we introduce a weighted Sobolev space where our perturbative functions 
are constructed.
 We use the  weight function $w(\cdot_z)$ (only in the horizontal direction)  defined by
\[ w(z) = \frac{1}{N(z)}, \quad z\in \bbr\] where this unbounded weight was essentially
introduced in  \cite{LiLiWa} to handle the difficulty coming from the vacuum state $n_+=N(+\infty)=0$. 
 Note that $w$ is monotonically increasing, $w(-\infty)=\frac{1}{(1+\ep)s^2}$ and $w(z) \sim e^{sz}$ when $z\gg 1$ 
 by \eqref{zeroth} and Theorem $2.2$. \\
 \indent
For an integer $k\ge 0$ and for any $\la>0$, we define
   the Sobolev  spaces  $H^k$ and  a weighted Sobolev space  $H^k_{w}$ for functions periodic in $y$ with period $\la$  as follows;
\begin{align*}   H^k  & :=  \{ \vp\in {H_{loc}^k} (\mathbb{R}^2)\, |\, 
 \quad \sum_{i+j\le k} 
\sum_{n\in \mathbb{Z}}
 \int_\bbR  n^{2j}|\pa_z^i \varphi_n(z)|^2  dz < \infty,\, \vp(\cdot_z,\cdot_y+\la)=\vp(\cdot_z,\cdot_y) \},
\end{align*}
\begin{align*}   H^k_{w}  & :=   \{ \vp\in {H^k_{loc}} (\mathbb{R}^2))\, |\, 
 \quad \sum_{i+j\le k} 
\sum_{n\in \mathbb{Z}}
 \int_\bbR  n^{2j}|\pa_z^i \varphi_n(z)|^2 w(z) dz < \infty,\, \vp(\cdot_z,\cdot_y+\la)=\vp(\cdot_z,\cdot_y) \}
\end{align*}
 where  for each $n\in\mathbb{Z}$ and for each $z\in\mathbb{R}$, $ \varphi_n(z)$ is the $n$th Fourier coefficient of the ($\la$-) periodic (in $y$) function $\varphi(z, \cdot_y )$. \\
 We define the norms by \footnote{ The two quantities used to define $\| \cdot \|_{H^k}$ are equivalent up to the transversal length scale $\la>0$. In this paper, we do not pursue any  estimate which needs to hold uniformly on $\la$.}
\begin{align*}
&\| \varphi\|_{H^k}^2  := \sum_ { i+ j \le k}\int_{\bbr\times [0, \la]} | \pa_z^i \pa_y^j \varphi (z, y) |^2 dzdy, \quad\\
& \| \varphi\|_{H^k_w}^2 :=  \sum_ { i+ j \le k}\int_{\bbr\times [0, \la]} | \pa_z^i \pa_y^j \varphi (z, y) |^2 w(z) dzdy.
 \end{align*} 
  
Note that for any $f\in H_w^k$, we know
\be\label{norm_comp}
\| f\|_{H^k}^2\leq  {(1+\ep)s^2}\| f\|_{H_w^k}^2\ee
  due to $w\geq  \frac{1}{(1+\ep)s^2}$.\\


\indent We perturb the equation \eqref{np_main_eq} around the wave
  \begin{align}\label{np_perturb}
n(x, y, t) = N(x- st) + \dv\vp (x-st, y, t) \quad \mbox{and } 
\quad 
p(x, y, t) = P(x- st) + \na\psi (x-st, y, t).
\end{align}
With $z:=x-st$ in the  moving frame, we expect that for each time $t$,  the 
perturbation $(\vp(\cdot_z,\cdot_y,t), \psi(\cdot_z,\cdot_y,t))$ lies on  the following function class:
\begin{equation}\label{w_sobolev}
 \varphi=(\vp^1,\vp^2)\in (H^3_{w})^2 \quad\mbox{ and }\quad 
 \psi\in H^3\quad\mbox{ with } \quad \na \psi \in H^2_w.
\end{equation} 


\begin{remark}

Such a one-sided  decaying function  (in the weighted Sobolev space) appears typically  with respect to the solvability of $\dv \vp = u$ in the infinite cylinder  $\mathbb{R}\times\mathbf{S}^\la$ (\textit{e.g.} see \cite{Sol}). An explanation relevant to the perturbation \eqref{np_perturb} is given in Remark 1.2 in  \cite{CCKL}.

\end{remark} 
 
  Now we  state the main theorem:
 \begin{theorem}\label{theoremnc}  For any $s>0$ and for any $\la>0$ such that the product $s\cdot \la$ is sufficiently small, there exist constants $\ep_0>0$, $ K_0>0$,  and $ C_0\geq 1$  such that if $(N,\calP)$ is a solution of \eqref{NP} for some $\ep\in(0,\ep_0)$
with \eqref{zeroth} given by Theorem \ref{thm_NP}, then 
for any
 initial data $(n_0, p_0)$ of \eqref{np_main_eq} in the form of $  n_0 = N+\dv \varphi_0$ and $p_0=P+\nabla\psi_0  $ satisfying 
  $$M_0:=(\| \vp_0\|^2_{H^3_w} + \| \ps_0\|^2_{H^3}  + \|\nabla\psi_0\|^2_{H^2_w}  \color{black})\le K_0,$$  
 there exists a unique global  solution $(n,p)$  of  \eqref{np_main_eq}  in the form of
\[
n(x, y, t) = N(x- st) + \dv\varphi (x-st, y, t), 
\quad 
p(x, y, t) = P(x- st) + \nabla\psi (x-st, y, t),
\]
where $ \varphi|_{t=0} = \varphi_0$ and $\psi|_{t=0} = \psi_0$, and
  $ (\phi, \psi)$ satisfies the following inequality:
\be\label{main_est}   \sup_{t\in [0, \infty)}\Big(\| \varphi\|^2_{H^3_w} + \| \psi\|^2_{H^3} + \| \na \psi \|^2_{H^2_w}\Big)(t) 
+\int_0^{\infty}  \Big(  \| \na  \vp\|_{H^3_w}^2
+   \| \na  \ps\|_{H^2_w}^2
+\ep     \| \na^{4} \ps\|_w^2 \Big)(t)dt
 \le C_0M_0.\ee
 
\end{theorem}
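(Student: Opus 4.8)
The plan is to follow the standard continuation/energy-method architecture for nonlinear stability of traveling waves, as in \cite{KaMa, JinLiWa, LiLiWa, CCKL}, but with the extra care forced by the $\ep>0$ term and the two-dimensionality. Combining the local-in-time existence statement of Subsection \ref{hopf} with a uniform-in-time a priori estimate, one closes a continuity argument: if the a priori bound \eqref{main_est} holds with $C_0M_0$ replaced by some $2C_0M_0$ on a maximal existence interval, and $K_0$ is taken small enough that the constants hiding in the nonlinear terms are controlled, then the bound self-improves and the solution extends to $[0,\infty)$. So the entire content is the a priori estimate, which I would prove by a weighted energy hierarchy of orders $0$, $1$, $2$, $3$ in the antiderivative variables $(\vp,\psi)$.

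First I would write down the perturbation equation \eqref{main_eq} satisfied by $(\vp,\psi)$ after substituting \eqref{np_perturb} into \eqref{np_main_eq} and integrating the conservation laws once in space (this is where the one-sided weighted space for $\vp$ is exactly what makes the antiderivative well-defined on $\mathbb{R}\times\mathbf{S}^\la$). The linear part will contain the wave-dependent coefficients $N$, $\calP$, $N'$, $\calP'$, the transport term $-s\vp_z$, the diffusion terms $\Del\vp$ and $\ep\Del\psi$, the coupling $\na n = \na\dv\vp$, and crucially the term $2\ep P\cdot\na\psi = 2\ep\calP\,\psi_z$ produced by the quadratic nonlinearity $2\ep(p\cdot\na)p$; the remaining nonlinearities are genuinely quadratic and small in $K_0$. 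For the zero-th order estimate I would multiply the $\vp$-equation by $\vp/N$ (i.e. by $w\vp$) and the $\psi$-equation by $\psi$, integrate over $\mathbb{R}\times[0,\la]$, and integrate by parts. This yields the master identity of the type displayed as \eqref{eq_lemma0_-}: a good energy $\|\vp\|_w^2 + \|\psi\|^2$ (and whatever weighted cross term is needed to make the quadratic form coercive), good dissipation $\|\na\vp\|_w^2 + \|\na\psi\|^2 + \ep\|\na^2\psi\|_w^2$, together with localized dissipation terms $\int\sqrt{\calP'}\,|\vp|^2$ coming from $\calP'>0$, against which one must absorb the dangerous term $\ep\int_0^t\|\sqrt{\calP'}\,\psi\|^2$. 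Handling this last term is, as the introduction flags, the crux: one does not have localized dissipation of $\psi$, only of $\vp$, so I would follow the Li--Li--Wang device sharpened in \cite{LiLiWa}, using the structural relation between $\vp$ and $\psi$ coming from the equations to trade the localized $\psi$-norm for the localized $\vp$-norm at the price of a factor of $\ep$ and an application of the Poincar\'e inequality \eqref{ineq:poincare2} in $y$ (valid because $s\la$ is small) to close the non-localized $\vp$-piece — this is Lemma \ref{lemma0_temp}. The genuinely two-dimensional wrinkle is the non-symmetric term $\int\frac{\calP}{N}\vp^1(\vp^2)_y$ in \eqref{quad_control_0-}, which has no $x\leftrightarrow y$ counterpart and must be bounded by Cauchy--Schwarz plus Poincar\'e, again eating a power of $s\la$.

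Next, for the first-order estimate I would differentiate \eqref{main_eq} once in $z$ and once in $y$, multiply by the corresponding weighted derivatives of $(\vp,\psi)$, and repeat; the new feature here is that $\pa_z w = -w N'/N = w(s+\calP)$ by \eqref{NP_relation}, so the weight derivative produces manageable lower-order terms controlled by Lemma \ref{lem_NP}, and the localized-$\vp$-dissipation-vs-$\ep\psi$ mechanism is reapplied at this level. The second- and third-order estimates (Lemma \ref{lemma23_}) go the same way — differentiate, pair, absorb — and by that stage the wave coefficients are smooth and bounded with all derivatives (Lemma \ref{lem_NP}), the vacuum singularity is handled entirely by $w$, and no new structural obstruction appears; the introduction already says this is routine given the lower-order work, so I would present it briefly for completeness. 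Summing the four levels with suitably small relative weights gives a closed inequality $\frac{d}{dt}\mathcal{E} + \mathcal{D} \lesssim \sqrt{M_0}\,\mathcal{D}$ for an energy $\mathcal{E}\sim \|\vp\|_{H^3_w}^2 + \|\psi\|_{H^3}^2 + \|\na\psi\|_{H^2_w}^2$ and dissipation $\mathcal{D}\sim \|\na\vp\|_{H^3_w}^2 + \|\na\psi\|_{H^2_w}^2 + \ep\|\na^4\psi\|_w^2$, valid once $\ep<\ep_0$ and $s\la$ are small enough; taking $K_0$ small makes the right side absorbable, and integrating in time yields \eqref{main_est}.

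The main obstacle, beyond bookkeeping, is exactly the one the authors identify: controlling $\ep\int_0^t\|\sqrt{\calP'}\,\psi\|^2$ uniformly as $\ep\to 0$. The naive bound $\ep\|\psi\|^2 \le \ep\cdot(\text{energy})$ does not close because $\|\psi\|^2$ is part of the energy, not the dissipation, and there is no decaying factor; Gr\"onwall would give only exponential-in-time growth. The resolution must exploit that this term is the time-integral of a \emph{localized} quantity and that $\vp$ (not $\psi$) enjoys localized dissipation, converting the $\psi$-mass near the transition layer into $\vp$-mass via the equations — and the point $z_0$ of Lemma \ref{lem_center}, where both transition layers overlap, is what makes the localization weights comparable. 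Getting the constants in this trade to be $\ep$-independent, while simultaneously paying for the non-symmetric $y$-derivative coupling with the small budget $s\la$, is the delicate step and the reason both smallness hypotheses enter.
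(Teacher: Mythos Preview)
Your architecture (local existence + uniform a priori estimate + continuation) and your diagnosis of the zeroth-order difficulty --- the term $\ep\int_0^t\int\calP'|\psi|^2$ handled by Lemma \ref{lemma0_temp} together with the Poincar\'e trick for the non-symmetric piece $\int\tfrac{\calP}{N}\vp^1(\vp^2)_y$ --- are exactly right and match the paper. But there is a real gap at first order, and your stated plan ``differentiate and reapply the localized-$\vp$-dissipation-vs-$\ep\psi$ mechanism'' would not close. At zeroth order the dissipation is only $\|\na\vp\|_w^2 + \ep\|\na\psi\|^2$ plus the localized $\vp$-terms (there is no $\|\na\psi\|^2$ nor $\ep\|\na^2\psi\|_w^2$ there, contrary to what you wrote), so after Lemma \ref{lemma0_temp} the right side of \eqref{eq_lemma0_} still carries $C\,M(t)\int_0^t\|\na\psi\|_w^2$. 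One therefore needs an independent source of $\int_0^t\|\na\psi\|_w^2$-dissipation, and this does \emph{not} come from repeating the $\ep\psi\to\vp$ device at the next level.

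The paper supplies it in two separate steps you do not anticipate. First (Lemma \ref{lemma1_1}): multiply the $\vp$-equation by $\na\psi$; the cross term $\Del\vp\cdot\na\psi$ integrates by parts (using curl-freeness) to $\na(\dv\vp)\cdot\na\psi$, and the $\psi$-equation then replaces $\dv\vp$ by $\psi_t - s\psi_z - \ep\Del\psi + \cdots$, converting the term into $\tfrac12\tfrac{d}{dt}|\na\psi|^2$ plus controllable $\ep$-pieces. This yields a bound on $\int_0^t\int N|\na\psi|^2$ and exploits the consumption sign in \eqref{eq:main} (see Remark 3.7), not any localized-dissipation trade. Second (Lemma \ref{lemma1_2}): multiply the $\na\psi$-equation by $w\na\psi$ and split the strip at $z_0$; on $\{z>z_0\}$ one has $w'/w\ge s/2$ from \eqref{rem_center}, so the transport term furnishes weighted dissipation directly, while on $\{z<z_0\}$ one has $w\le CN$ and the just-obtained $\int N|\na\psi|^2$ bound suffices. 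Only after both does $\int_0^t\|\na\psi\|_w^2$ appear on the left and close everything up to order one; the higher orders then genuinely are ``differentiate and repeat,'' but repeat \emph{this} pair of lemmas, not the zeroth-order $\ep\psi\to\vp$ device. Your sketch conflates two distinct mechanisms, and the one you omit is the load-bearing one for obtaining $\|\na\psi\|_w$-dissipation.
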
 
\begin{remark}\label{remark_nc}
 
 From \eqref{np_perturb}, \eqref{CP} and \eqref{CH}, 
 we have $c(\cdot_z +st)/C=e^{-\psi}$. Together with 
  $n(\cdot_z +st )-N=\dv\vp $, the above theorem implies
 \[   
 \sup_{t\in[0,\infty)}\Big(  \|n(\cdot_z +st,\cdot_y,t)-N(\cdot_z)  \|_{H^2_w}^2
+   \|  \nabla\big( 
\log c (\cdot_z +st,\cdot_y,t) - \log C(\cdot_z)
\big)   \|_{H^2_w}^2
\Big)
\]
 \[   
+\int_0^\infty  \Big(  \|n(\cdot_z +st,\cdot_y,t)-N(\cdot_z)  \|_{H^3_w}^2
+   \|  \nabla\big( 
\log c (\cdot_z +st,\cdot_y,t) - \log C(\cdot_z)
\big)   \|_{H^2_w}^2
\Big)dt\leq C_0M_0.\]
  
\end{remark}

Before closing this subsection we give a summary on notations used in the paper.
 \[\Om = \bbr \times [0, \la],\]
\[ w(z) : =\frac{1}{N(z)},\]
\[M(t):=  \sup_{ s\in [0, t]}( \| \varphi(s)\|^2_{H^3_w} 
+ \|\psi(s)\|^2_{H^3} +\|\na \psi(s)\|^2_{H^2_w}),\]
\[ M_0:=  ( \| \varphi_0\|^2_{H^3_w} 
+ \|\psi_0\|^2_{H^3}+\|\na \psi_0\|^2_{H^2_w})  \]
\[  \|  f\|:=\|  f\|_{L^2(\Om)},\]
\[  \|  f\|^2_{k}:=\|  f\|^2_{H^k}=\sum_{|\alpha|=0}^k\int_\Omega|D^\alpha f|^2\,dzdy,\]
\[  \|  f\|^2_{k,w}:=\|  f\|^2_{H^k_w}=\sum_{|\alpha|=0}^k\int_\Omega|D^\alpha f(z,y)|^2\,w(z)dzdy,
\]
\[\int f:=\int_\Omega f(z,y) dzdy,\]
\[\int_0^t g:=\int_0^t g(\sigma) d\sigma.\]

Here we use the notation $\| \cdot \|$ to indicate certain norm in space only. For instance, when $f$ is time-dependent then $\|f\|$ means $\|f(t)\|$ in the sequel.

\end{subsection}

\begin{subsection}{Perturbation equation}\label{hopf}

In this subsection, we derive the system on $(\vp, \psi)$ first. Next we state the main propositions including results on the local existence and the uniform estimates  of  $(\vp, \psi)$.

From \eqref{np_main_eq} and \eqref{np_perturb}, by setting 
 \begin{align} \label{uv}
 u = \na \cdot \varphi\quad  \mbox{ and } \quad  v = \na \psi 
 \end{align}
  temporarily in \eqref{np_perturb}, we obtain 
\begin{align}\label{perturb} \begin{aligned}
&u_t - s u_z - \Del u
=  \na \cdot ( Np + Pu+ up ),\\
&v_t - s v_z - \ep\Del v =  -2\ep \left(  ( (P+v )\cdot \na ) (P+v) - (P\cdot \na) P \right) + \na  u.
\end{aligned}\end{align}
Plugging the relation \eqref{uv} in \eqref{perturb} and taking off derivatives, we find that the antiderivative $(\vp, \psi)=((\vp^1,\vp^2),\psi)$ of $(u,v)$ 
satisfies the system
\begin{align}\label{main_eq} \begin{aligned}
\varphi_t - s \varphi_z - \Del \varphi
& =   N \na \psi + P\na \cdot \varphi + \na\cdot \varphi \na \psi, \\
\ps_t - s \ps_z -\ep \Del \ps &=  -2\ep  P \cdot\na \ps -\ep  |\na \ps|^2 + \dv\varphi
\end{aligned}
\end{align}
for $
 ( z, y,t) \in \bbr \times \mathbf{S}^\la\times  \bbr_+
 $. In doing so, we use the curl free property of $v$ and $p$. Here the term $P\dv\vp$ means the vector $
 (\calP\dv\vp,0)$.  The multidimensional setting \eqref{uv}
 was proposed in \cite{CCKL}. Looking for a perturbation in our system as an antiderivative follows the setting in one dimensional 
 works \cite{JinLiWa}, \cite{LiLiWa}. This method 
   can be  found in the study on the nonlinear stability of  shock profiles of viscous conservation laws
  under the mean zero condition with a weight function since the papers \cite{KaMa} and \cite{Go}. Without the mean zero condition, we refer to \cite{Liu}, \cite{Zou}, \cite{smol} and references therein. \\

\indent 
We obtain Theorem \ref{theoremnc} immediately without any difficulty once we prove the proposition below.
\begin{proposition}\label{zeroglobal}  

For any $s>0$ and any $\la>0$ such that the product $(s\cdot\la)$ is sufficiently small, there exist constants $\ep_0>0$, $K_0>0$ and $C_0\geq 1$ such that if $(N,\calP)$ is a solution of \eqref{NP} for some $\ep\in(0,\ep_0)$
with \eqref{zeroth} given by Theorem \ref{thm_NP}, then 
  we have the following:\\

For any
 initial data $(\vp_0, \psi_0)$ of \eqref{main_eq}   satisfying 
 $$M_0:=\| \vp_0\|^2_{H^3_w} + \| \ps_0\|^2_{H^3}  + \|\nabla\psi_0\|^2_{H^2_w}  \color{black}\le K_0,$$  
 there exists a unique global  solution $(\vp,\psi)$  of  \eqref{main_eq}   
where $ \varphi|_{t=0} = \varphi_0$ and $\psi|_{t=0} = \psi_0$, and  $ (\phi, \psi)$ satisfies the following inequality:
\[   \sup_{t\in [0, \infty)}\Big(\| \varphi\|^2_{H^3_w} + \| \psi\|^2_{H^3} + \| \na \psi \|^2_{H^2_w}\Big) 
+\int_0^{\infty}  \Big(  \| \na  \vp\|_{H^3_w}^2
+   \| \na  \ps\|_{H^2_w}^2
+\ep     \| \na^{4} \ps\|_w^2 \Big)dt
 \le C_0M_0.\]

\end{proposition}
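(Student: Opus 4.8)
The plan is to obtain Proposition \ref{zeroglobal} from a single \emph{a priori} estimate by the usual continuation argument. Local-in-time existence and uniqueness of $(\vp,\psi)$ in the class \eqref{w_sobolev} follows from standard parabolic theory for \eqref{main_eq}, so it suffices to show that there is a small $\delta_0>0$ such that, whenever a solution exists on $[0,T]$ with $M(t)\le\delta_0^2$ there, one in fact has
\[
M(t)+\int_0^t\!\Big(\|\na\vp\|^2_{H^3_w}+\|\na\psi\|^2_{H^2_w}+\ep\|\na^4\psi\|^2_w\Big)\,d\sigma \;\le\; C\Big(M_0+(\delta_0+\ep+s\la)\,M(t)\Big).
\]
Once $\delta_0$, $\ep_0$ and $s\la$ are small enough, the last term is absorbed, and choosing $K_0$ with $CK_0<\delta_0^2/4$ makes $M(t)\le\delta_0^2/2$ propagate; by continuity of $M$ the solution is global and obeys the asserted bound (after renaming constants), which is \eqref{main_est} and hence Theorem \ref{theoremnc}. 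Thus the whole content is the weighted energy estimate for \eqref{main_eq}, which I organize as a hierarchy of identical shape: a zeroth-order estimate (Subsection \ref{sec_zero}), a first-order one (Subsection \ref{sec_first}), and a combined second/third-order one (Lemma \ref{lemma23_}, Subsection \ref{sec_higher}).

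For the zeroth-order step I test the $\vp$-equation of \eqref{main_eq} against $w\vp$ and the $\psi$-equation against $\psi$, integrate over $\Omega$, and add. Two structural facts drive the estimate: because $w=1/N$, the coupling is antisymmetric, $\int N\na\psi\cdot\vp\,w=\int\na\psi\cdot\vp=-\int\dv\vp\,\psi$, so the $N\na\psi$ term in the $\vp$-equation cancels the $\dv\vp$ term coming from the $\psi$-equation; and integrating the transport and Laplacian terms by parts, using $w'/w=s+\calP$ (from \eqref{NP_relation}), $|w''|\le Lw$ and $|(1/\sqrt N)'|\le L/\sqrt N$ (Lemma \ref{lem_NP}), produces on the good side the weighted diffusion $\|\na\vp\|^2_w$, a localized weighted $L^2$-quantity of $\vp$ essentially of the form $\int w'|\vp|^2$, and $\ep\|\na\psi\|^2$, the weight-derivative remainders being lower order. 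The nonlinearities $(\dv\vp)\na\psi$ and $\ep|\na\psi|^2$ are handled by H\"older, the embedding $H^2(\Omega)\hookrightarrow L^\infty(\Omega)$ on the $2$D cylinder, and the smallness $M(t)\le\delta_0^2$, contributing $\lesssim\delta_0\cdot(\text{dissipation})$. Applying $\pa_z,\pa_y$ and then second and third derivatives to \eqref{main_eq} and repeating yields the first- and higher-order estimates, the new commutator terms built from $N',\calP',\dots$ again controlled by Lemma \ref{lem_NP}; the top-order $\ep$-weighted quantity $\ep\|\na^4\psi\|^2_w$ is exactly the dissipation furnished by $\ep\Del\psi$ at third order, while the uniform-in-$\ep$ dissipation $\|\na\psi\|^2_{H^2_w}$ is recovered by coupling the full parabolic dissipation of $\vp$ with the relation $N\na\psi=\vp_t-s\vp_z-\Del\vp-P\dv\vp-(\dv\vp)\na\psi$ read off from \eqref{main_eq} through a weighted interaction functional of the type $\int w\,\vp\cdot\na\psi$ (note $Nw=1$).

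The single step that does not close by itself — and which I expect to be the main obstacle — is the appearance of $\ep\int_0^t\!\int\calP'\psi^2$ in \eqref{eq_lemma0_-}, the $\ep$-weighted time integral of a localized $L^2$-norm of $\psi$ produced by the linear term $2\ep P\cdot\na\psi$, together with its first/second-order analogues. I plan to dispose of it in two moves. Writing $\psi=\bar\psi(z)+\psi^\perp$ for the $y$-average and the $y$-oscillation, the oscillatory part obeys the transversal Poincar\'e inequality \eqref{ineq:poincare2}, $\|\psi^\perp\|\lesssim\la\|\pa_y\psi\|$, so $\ep\int\calP'(\psi^\perp)^2$ is bounded (using that $\calP'$ is localized and $|\calP'|\le L$) by $C(s)\la^2\cdot\ep\|\na\psi\|^2$, absorbable once $s\la$ is small; this is exactly where \eqref{smallness_lemma0_-_la} enters, and the same Poincar\'e device controls the non-localized $L^2$-norm of $\vp$ in \eqref{use_poincare}. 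For the $y$-average $\bar\psi$, which Poincar\'e does not see, I follow the one-dimensional idea of \cite{LiLiWa}: form the weighted interaction functional $\ep\!\int \calP\,w\,\psi\,\vp^1$ (well defined since $\calP/N$ is bounded, by Theorems \ref{thm_NC} and \ref{thm_NP}); its time derivative, computed from both equations in \eqref{main_eq}, contains $-\ep\int\calP'\psi^2$ with the favorable sign, while the remaining contributions are $\ep$-small multiples of the $\vp$-dissipations already in hand — the localized $\int w'|\vp|^2$ and $\|\na\vp\|^2_w$ — plus the genuinely two-dimensional, non-symmetric term $\int\frac{\calP}{N}\vp^1(\vp^2)_y$ of \eqref{quad_control_0-}, which I split by Young's inequality into a small multiple of $\|\na\vp\|^2_w$ and an $L^2$-piece again tamed by the $y$-Poincar\'e inequality. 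This is the delicate bookkeeping of Lemma \ref{lemma0_temp}, and it is the one place forcing $\ep$ (beyond $s\la$) to be small; the asymmetry between the $z$- and $y$-directions in \eqref{main_eq} is precisely what prevents the clean one-dimensional cancellations of \cite{LiLiWa} from being quoted directly.

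Finally I assemble the pieces: set $\mathcal{E}(t)=\kappa_0E_0(t)+\kappa_1E_1(t)+\kappa_2E_2(t)+\ep\!\int \calP\,w\,\psi\,\vp^1$ with constants $\kappa_0,\kappa_1,\kappa_2>0$ chosen so that every cross term and every bad contribution at a given order is dominated by the dissipation already produced, so that $\frac{d}{dt}\mathcal{E}+c\,\mathcal{D}\le C(\delta_0+\ep+s\la)\,\mathcal{D}$, where $\mathcal{D}$ is the full dissipation appearing in \eqref{main_est}. Since the interaction term is a lower-order perturbation, $\mathcal{E}(t)\aeq M(t)$, so integrating in time gives the displayed \emph{a priori} bound; the continuity argument then upgrades the local solution to the global one satisfying \eqref{main_est}, proving Proposition \ref{zeroglobal}.
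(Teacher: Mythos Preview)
Your overall architecture --- local existence plus an \emph{a priori} estimate closed by continuation --- matches the paper, and you correctly isolate $\ep\int_0^t\!\int\calP'\psi^2$ in \eqref{eq_lemma0_-} as the crux. But the mechanism you propose to kill this term is not the one the paper uses, and it appears to have a genuine leak.

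The paper does \emph{not} split $\psi=\bar\psi+\psi^\perp$, and the smallness \eqref{smallness_lemma0_-_la} is never applied to $\psi$; the Poincar\'e device \eqref{ineq:poincare2} is used solely for the non-symmetric $\vp$-term $\int\frac{\calP}{N}\vp^1(\vp^2)_y$ (in both Lemma~\ref{lemma0_-} and Lemma~\ref{lemma0_temp}). To bound $\int\calP'\psi^2$ the paper instead tests the two equations against the \emph{diagonal} weights $\frac{\calP}{N}\vp$ and $\calP\psi$ (Lemma~\ref{lemma0_temp}). The good term $\frac{s}{2}\int\calP'\psi^2$ then comes from the \emph{transport} $-s\psi_z$, with no $\ep$ in front, and every remainder is either a localized $\vp$-quantity already supplied by Lemma~\ref{lemma0_-} (namely $\int\frac{(N')^2}{N^3}|\vp|^2$, $\int\frac{\calP'}{N}(\vp^1)^2$, $\int\frac{\calP N'}{N^2}(\vp^2)^2$) or a small multiple of $\int\calP'\psi^2$ itself. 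One then forms $\eqref{eq_lemma0_-}+\ep C_1\cdot(\text{Lemma~\ref{lemma0_temp}})$ and uses $\ep_0 C_1 C_2\le\tfrac12$ to absorb.

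Your off-diagonal interaction functional $\ep\int\frac{\calP}{N}\psi\vp^1$ does produce $-\frac{\ep}{2}\int\calP'\psi^2$ from the coupling $N\psi_z$, but its time derivative also contains, from the transport and $\calP\,\dv\vp$ pieces, terms that after integration by parts reduce only to things of size $C\ep\|\psi\|\,\|\na\vp\|$ or $C\ep\|\vp\|\,\|\na\psi\|$ (since $\calP/N$ is merely bounded, not decaying). After Young these leave $C\ep\int_0^t\|\psi\|^2$ or $C\ep\int_0^t\|\vp\|^2$, for which there is no time-integrated dissipation in the scheme; Gronwall would then give $e^{C\ep t}$ growth, not a uniform bound. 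The paper's diagonal multiplier avoids this precisely because the corresponding transport contributions $s(\frac{\calP}{N})'|\vp|^2/2$ and $\frac{s}{2}\calP'\psi^2$ are \emph{localized} and sign-definite. So either switch to the paper's multiplier $\big(\frac{\calP}{N}\vp,\ \calP\psi\big)$, or supply a careful argument showing how the unlocalized remainders from your interaction functional are actually absorbed.
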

Proposition \ref{zeroglobal} is a consequence of the following two propositions:  Proposition \ref{zerolocal} which gives a local-in-time existence result and Proposition \ref{uniform_} which shows an \textit{a priori} uniform-in-time estimate.

\begin{proposition}\label{zerolocal}
Let $s>0$ and $\la>0$. For sufficiently small $\epsilon>0$,
if $(N,\calP)$ is a solution of \eqref{NP} 
with \eqref{zeroth} given by Theorem \ref{thm_NP}, then  for any 
$M>0$,  there exists $T_0>0$ such that 
for any data $ (\varphi_0, \psi_0)$ with $ \| \varphi_0\|^2_{H^3_w} + \| \psi_0\|^2_{H^3} + \|\na \psi_0\|^2_{H^2_w}  \leq M $,
 the system \eqref{main_eq} has
a unique solution $(\vp, \psi)$ on $[0, T_0]$ satisfying 
\[  \varphi \in L^{\infty}(0, T_0; H^3_w), \, \psi \in L^{\infty}(0, T_0; H^3), \, \nabla\psi \in L^{\infty}(0, T_0; H^2_w)  \color{black} \mbox{ with }  \varphi|_{t=0} = \varphi_0,\, \psi|_{t=0} = \psi_0\] and
\begin{equation*}
 \sup_{t\in [0, T_0]}\Big( \| \varphi\|^2_{H^3_w}+ \| \psi \|^2_{H^3_w }+\| \na \psi\|^2_{H^2_w}\Big) \le 2 M.
 \end{equation*}
\end{proposition}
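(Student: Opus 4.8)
The plan is the classical \emph{linearize--iterate--contract} scheme for semilinear parabolic systems; the only points that are not entirely routine are the bookkeeping of the exponentially growing weight $w(z)=1/N(z)$ and the fact that the function class \eqref{w_sobolev} is tuned so that the nonlinear terms in \eqref{main_eq} lie exactly one derivative below the solution class. First I would record the properties of $w$ that make weighted estimates as good as unweighted ones: by \eqref{NP_relation} one has $w'/w=-N'/N=s+\calP$, which is bounded with $0<w'/w<s$, and by Lemma \ref{lem_NP} also $|w'|+|w''|\le Lw$; equivalently, conjugation by $w^{1/2}$ carries $\Del+s\pa_z$ to itself plus lower-order terms with bounded coefficients. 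Hence, for $a\in\{1,\ep\}$ and an integer $k\ge 0$, the linear Cauchy problem $f_t-sf_z-a\Del f=g$, $f|_{t=0}=f_0$, with $f_0\in H^k_w$ and $g\in L^2(0,T;H^{k-1}_w)$, has a unique solution $f\in C([0,T];H^k_w)\cap L^2(0,T;H^{k+1}_w)$ satisfying
\[
\sup_{[0,T]}\|f\|_{H^k_w}^2+\int_0^T\|\na f\|_{H^k_w}^2\,dt\ \le\ Ce^{CT}\Big(\|f_0\|_{H^k_w}^2+\int_0^T\|g\|_{H^{k-1}_w}^2\,dt\Big).
\]
This is obtained by the standard Galerkin construction (or via the analytic semigroup of the conjugated operator) together with the weighted $D^\alpha$-energy identities, in which the transport term produces $\tfrac{s}{2}\int|D^\alpha f|^2 w'$ and the weight--diffusion cross term $-\tfrac{a}{2}\int|D^\alpha f|^2 w''$, both absorbed into the left side since $0<w'\le sw$ and $|w''|\le Lw$; the same statement holds with $H^k_w$ replaced by the unweighted $H^k$, which governs the non-weighted part of $\psi$.

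Next I would run a Picard iteration on the closed set
\[
B_{2M}:=\Big\{(\vp,\psi)\ :\ (\vp,\psi)|_{t=0}=(\vp_0,\psi_0),\ \ \sup_{[0,T_0]}\big(\|\vp\|_{H^3_w}^2+\|\psi\|_{H^3}^2+\|\na\psi\|_{H^2_w}^2\big)\le 2M\Big\},
\]
sending $(\bar\vp,\bar\psi)\in B_{2M}$ to the pair $(\vp,\psi)$ obtained by solving the two \emph{decoupled} linear parabolic equations gotten from \eqref{main_eq} by freezing all right-hand sides at $(\bar\vp,\bar\psi)$. The crucial observation is that, using Lemma \ref{lem_NP} (so that $N,\calP$ and their first two derivatives are bounded) and the two-dimensional product (Moser) inequalities in the weighted spaces --- which are legitimate because $\na\bar\psi\in H^2_w\hookrightarrow H^2\hookrightarrow L^\infty$ and $\dv\bar\vp\in H^2\hookrightarrow L^\infty$ (recall \eqref{norm_comp} and $H^2(\Om)\hookrightarrow L^\infty$) --- the frozen right-hand sides obey
\[
\big\|N\na\bar\psi+P\,\dv\bar\vp+\dv\bar\vp\,\na\bar\psi\big\|_{H^2_w}+\big\|{-}2\ep\,P\cdot\na\bar\psi-\ep|\na\bar\psi|^2+\dv\bar\vp\big\|_{H^2_w}\ \lesssim\ \sqrt{2M}+2M,
\]
so they belong to $L^\infty(0,T_0;H^2_w)$, one derivative below the solution class. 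Feeding this into the linear estimate with $k=3$ (using $\vp_0\in H^3_w$, $\psi_0\in H^3$, $\na\psi_0\in H^2_w$, and treating the unweighted $H^3$-part of $\psi$ and the weighted $H^2_w$-part of $\na\psi$ separately) yields
\[
\sup_{[0,T_0]}\big(\|\vp\|_{H^3_w}^2+\|\psi\|_{H^3}^2+\|\na\psi\|_{H^2_w}^2\big)\ \le\ Ce^{CT_0}\big(M+T_0\,Q(2M)\big)
\]
for a polynomial $Q$ with coefficients depending on $s,\la,\ep$; choosing $T_0=T_0(M,s,\la,\ep)$ small enough drives the right side below $2M$, so the iteration map preserves $B_{2M}$.

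Finally I would verify that the map is a contraction. For inputs $(\bar\vp_i,\bar\psi_i)\in B_{2M}$, $i=1,2$, the difference of the two images solves the same linear system with right-hand sides equal to the differences of the nonlinear terms; since these are affine-plus-quadratic in $(\bar\vp,\bar\psi)$ with all factors bounded by $\sqrt{2M}$, they are dominated by $C(M)$ times the difference of the inputs in the \emph{weaker} norm $\sup_{[0,T_0]}(\|\cdot\|_{H^2_w}^2+\|\cdot\|_{H^2}^2+\|\na\cdot\|_{H^1_w}^2)$, on which $B_{2M}$ is complete. Running the linear estimate one level down, with vanishing initial difference, then gives --- after a further shrinking of $T_0$ --- a contraction factor $\le\tfrac12$ in that weaker norm, so Banach's fixed point theorem delivers the unique $(\vp,\psi)\in B_{2M}$, which inherits the regularity $\vp\in L^\infty(0,T_0;H^3_w)$, $\psi\in L^\infty(0,T_0;H^3)$, $\na\psi\in L^\infty(0,T_0;H^2_w)$ of the Picard iterates together with the bound $\le 2M$. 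Uniqueness within the full solution class follows by applying the same weaker-norm energy estimate to the difference of any two solutions and closing with Gr\"onwall's inequality. I expect the main obstacle to be precisely this weighted bookkeeping: one must verify at each step that the unbounded weight never produces an uncontrolled term --- which is exactly where $w'/w=s+\calP$ bounded (from the traveling-wave ODE \eqref{NP_relation}) and $|w'|+|w''|\le Lw$ (Lemma \ref{lem_NP}) enter --- and one must keep track, in the product estimates, that the class \eqref{w_sobolev} keeps $\na\psi$, hence $\dv\vp$, in $H^2_w\hookrightarrow L^\infty$. Compared with the one-dimensional versions in \cite{JinLiWa,LiLiWa} and the two-dimensional linear analysis in \cite{CCKL}, no new idea is needed here, and, being purely local in time, the argument does not see the delicate $\ep$-uniform structure that Proposition \ref{uniform_} will require.
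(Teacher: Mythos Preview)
Your proposal is correct and follows exactly the route the paper indicates: the paper omits the proof entirely, stating only that ``the local solution of \eqref{main_eq} can be obtained by the usual contraction method and by a similar computation as in the proof of Proposition \ref{uniform_}, for which we omit its proof (or see \cite{CCKL}).'' Your linearize--iterate--contract outline, with the weighted bookkeeping resting on $w'/w=s+\calP$ bounded and $|w'|+|w''|\le Lw$, is precisely the standard argument the authors have in mind.
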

\indent
The local  solution of \eqref{main_eq} can be obtained 
 by the usual contraction method and by  a similar computation as in the proof of Proposition \ref{uniform_}, for which we omit its proof (or see \cite{CCKL}).\\  
 
 \noindent
The following proposition gives a uniform-in-time estimate, which is the main heart of this paper.
\begin{proposition}\label{uniform_} 
For any $s>0$ and any $\la>0$ such that the product $(s\cdot\la)$ is sufficiently small, there exist constants $\ep_0>0$, $\delta_0>0$ and $C_0\geq 1$ such that if $(N,\calP)$ is a solution of \eqref{NP} for some $\ep\in(0,\ep_0)$
with \eqref{zeroth} given by Theorem \ref{thm_NP}, then 
  we have the following:\\
 If  $(\varphi, \psi)$ be a local solution of \eqref{main_eq} on $[0,T]$ for some $T>0$ with $M(T)\leq\delta_0$, 
then   we have 
\[   M(T) +  \int_0^{T}  \sum_{l = 1}^4 \| \na^{l} \vp\|_w^2
+\int_0^{T}   \sum_{l = 1}^3\| \na^l \ps\|_w^2
+\ep\int_0^{T}     \| \na^{4} \ps\|_w^2 \leq C_0 M(0).\]

\end{proposition}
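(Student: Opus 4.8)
The plan is to run a weighted energy method in the moving frame $z=x-st$. For each order $k=0,1,2,3$ I will derive a differential inequality of the schematic form $\frac{d}{dt}\calE_k(t)+\calD_k(t)\aleq\calR_k(t)$, where $\calE_k(t)$ is equivalent to the order-$k$ part of $M(t)$; the dissipation $\calD_k$ contains the order $k{+}1$ weighted gradient of $\vp$, the order-$k$ weighted gradient of $\psi$ (appearing with the factor $\ep$ only at the top order $k=3$), and several \emph{localized} lower-order dissipations of $\vp$ produced by the weight $w$ and by $\calP'$; and $\calR_k$ collects the remainder. Forming $\calE:=\sum_{k=0}^{3}\beta_k\calE_k$ with $\beta_0\gg\beta_1\gg\beta_2\gg\beta_3>0$ chosen so that $\sum_k\beta_k\calR_k$ is dominated by $\frac12\sum_k\beta_k\calD_k$ — here one uses the standing smallness $M(T)\le\delta_0$, the smallness of $\ep<\ep_0$, and the smallness of $s\la$ — then integrating over $[0,T]$ and invoking $\calE(t)\aeq M(t)$ gives $M(T)+\int_0^{T}(\text{dissipations})\le C_0 M(0)$, which is the assertion.

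\noindent\textbf{The zeroth order estimate.} First I multiply the $\vp$-equation in \eqref{main_eq} by $w\vp$ and the $\psi$-equation by $\psi$, integrate over $\Om$, and add. The weight $w=1/N$ is chosen precisely so that the two coupling terms cancel: $\int_\Om(N\na\psi)\cdot(w\vp)=\int_\Om\na\psi\cdot\vp=-\int_\Om(\dv\vp)\,\psi$. Integration by parts in the convection and diffusion terms, using $w'/w=s+\calP=-N'/N$ from \eqref{NP_relation} and Lemma \ref{lem_NP} (which controls $w''$ by $w'$ up to a bounded localized term), places on the left the dissipations $\|\na\vp\|_w^2$, $\int_\Om w'|\vp|^2$ and $\ep\|\na\psi\|^2$; and one further integration by parts in $z$ in the term $\int_\Om(P\dv\vp)\cdot(w\vp)$ produces, after a favorable combination with the preceding pieces, a localized dissipation of $\vp^1$ comparable to $\int_\Om\calP'|\vp^1|^2$, at the cost of the genuinely non-symmetric term $\int_\Om\frac{\calP}{N}\vp^1\pa_y\vp^2$. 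The right-hand side then consists of: (i) the linear term $\ep\int_\Om\calP'|\psi|^2$ coming from $-2\ep P\cdot\na\psi$; (ii) the non-symmetric term just mentioned; (iii) the cubic terms $\int_\Om(\dv\vp)(\na\psi)\cdot w\vp$ and $\ep\int_\Om|\na\psi|^2\psi$. The non-$\ep$-weighted dissipation $\|\na\psi\|_w^2$ recorded in the statement (and its higher-order analogues) is gained from a separate cross estimate pairing the two equations of \eqref{main_eq}, at the price of a controllable fraction of $\vp$-dissipation.

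\noindent\textbf{Disposing of the remainder.} Terms of type (iii) are absorbed by $\calD_0$ using $M(T)\le\delta_0$ and the embedding $H^2(\Om)\hookrightarrow L^\infty(\Om)$. For (ii), note that $\pa_y\vp^2$ has vanishing $y$-average, so the $y$-mean of $\vp^1$ may be subtracted from $\vp^1$ inside the integral; since $\calP/N$ is bounded, the Poincar\'e inequality in $y$ then gives $\big|\int_\Om\frac{\calP}{N}\vp^1\pa_y\vp^2\big|\aleq\la s^{-1}\|\pa_y\vp^1\|\,\|\pa_y\vp^2\|\aleq s\la(1+\ep)\|\na\vp\|_w^2$ via \eqref{norm_comp}, which is absorbed by $\|\na\vp\|_w^2$ once $s\la$ is small — this is the mechanism behind the $s\la$-smallness hypothesis. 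Term (i), namely $\ep\|\sqrt{\calP'}\psi\|^2$, is a localized $L^2$-mass of $\psi$ that no dissipation of $\psi$ can absorb (that dissipation carries the small factor $\ep$): this is exactly what Lemma \ref{lemma0_temp} handles. A careful manipulation there — coupling the $\psi$-equation to the $\vp$-equation on the transition layer, where $\calP'$ concentrates (cf.\ Lemma \ref{lem_center}) — bounds $\int_0^{t}\|\sqrt{\calP'}\psi\|^2$ by a fixed multiple of $\int_0^{t}\big(\int_\Om\calP'|\vp^1|^2+\|\na\vp\|_{H^1_w}^2+\|\na\psi\|_{H^1_w}^2\big)$ plus the initial energy, so that after multiplication by $\ep$ and a choice of small $\ep_0$ it is swallowed by the dissipations already at hand. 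I expect this step — making the localized $L^2$-mass of $\psi$ subordinate, with an $\ep$-gain, to that of $\vp^1$ — to be the main obstacle of the proof, and the point where the asymmetry between the propagation direction $z$ and the transversal direction $y$ forces work genuinely beyond the one-dimensional argument of \cite{LiLiWa}.

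\noindent\textbf{Higher orders and conclusion.} For $k=1,2,3$ I apply $\pa_z^i\pa_y^j$ with $i+j=k$ to \eqref{main_eq} and test the resulting equations by the corresponding weighted multiples of $\pa_z^i\pa_y^j\vp$ and $\pa_z^i\pa_y^j\psi$, adding as before; at $k=3$ only the $\ep$-weighted top dissipation $\ep\|\na^4\psi\|_w^2$ survives, which is why it appears with $\ep$ in the statement. The principal-part cancellation persists verbatim; the commutators $[\pa^\alpha,w]$, $[\pa^\alpha,N]$, $[\pa^\alpha,\calP]$ are of lower order and are controlled by Lemma \ref{lem_NP} together with the lower-order dissipations already obtained; the derivatives of the nonlinearities $\dv\vp\,\na\psi$ and $\ep|\na\psi|^2$ are handled by Sobolev embedding on the strip $\Om$ and by $M(T)\le\delta_0$; the non-symmetric cross terms are treated exactly as above via the Poincar\'e inequality in $y$ and the smallness of $s\la$; and the localized terms $\ep\|\sqrt{\calP'}\pa^\alpha\psi\|^2$ are absorbed by the higher-order analogue of Lemma \ref{lemma0_temp}. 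The first-order estimate ($k=1$) is of the same essential character as the zeroth, being where the mixed structure in $z$ and $y$ of \eqref{main_eq} first enters in full, whereas the orders $k=2,3$ iterate this routinely (Lemma \ref{lemma23_}). Combining as in the first paragraph then yields the stated inequality.
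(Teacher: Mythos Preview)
Your overall architecture is right, but two specific mechanisms are either missing or misidentified, and without them the estimate does not close.

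First, the ``separate cross estimate pairing the two equations'' that you invoke to obtain the non-$\ep$-weighted dissipation $\int_0^t\|\na\psi\|_w^2$ is not sufficient. Multiplying the $\vp$-equation by $\na\psi$ (the paper's Lemma~\ref{lemma1_1}) only yields control of $\int_0^t\int N|\na\psi|^2$, i.e.\ the \emph{wrong}-weighted quantity, with weight $N$ rather than $w=1/N$; no cross pairing with weight $w$ or $w^2$ can do better without demanding uncontrolled higher $\vp$-norms. To upgrade to $\int_0^t\int w|\na\psi|^2$, the paper (Lemma~\ref{lemma1_2}) tests the $\na\psi$-equation by $w\na\psi$ and exploits the convection $-s\psi_z$, which produces $\tfrac{s}{2}\int w'|\na\psi|^2$; one then splits the integral at the transition point $z_0$ of Lemma~\ref{lem_center}, using $w'/w\ge s/2$ for $z>z_0$ to make this a genuine dissipation there, and $w\le CN$ for $z<z_0$ to reduce to the already-controlled $\int N|\na\psi|^2$. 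You have placed Lemma~\ref{lem_center} inside your description of Lemma~\ref{lemma0_temp}, but this splitting is where it is actually used. Without it the zeroth-order remainder $CM(t)\int_0^t\|\na\psi\|_w^2$ cannot be absorbed and the hierarchy does not close.

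Second, your account of Lemma~\ref{lemma0_temp} is off: the mechanism is not a ``coupling on the transition layer''. Rather, one repeats the zeroth-order test with the extra multiplier $\calP$: test the $\vp$-equation by $\tfrac{\calP}{N}\vp$ and the $\psi$-equation by $\calP\psi$. The convection term then gives $-s\int\calP\psi\psi_z=\tfrac{s}{2}\int\calP'|\psi|^2>0$ on the left, and all other terms land on the left side of \eqref{eq_lemma0_-} (including the localized $\vp$-dissipations you correctly isolated) plus $CM(t)\int_0^t\|\na\psi\|_w^2$. Multiplying the resulting bound by $\ep C_1$ and adding to \eqref{eq_lemma0_-} then kills $\ep\int\calP'|\psi|^2$ for small $\ep_0$. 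Incidentally, at orders $k\ge1$ no analogue of Lemma~\ref{lemma0_temp} is needed: for $|\alpha|\ge1$ the term $\ep\int\calP'|\pa^\alpha\psi|^2$ is already dominated by $C\ep\|\na^{|\alpha|}\psi\|^2$, which the previously established estimates absorb directly.
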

Note that $C_0$ does not depend on $T>0$.\\

\begin{proof}[Proof of Proposition \ref{zeroglobal} from Proposition \ref{zerolocal} and Proposition \ref{uniform_}] We include the proof here for readers' convenience even if  this continuation argument is now standard (or see \cite{CCKL}).
Let's take $M:=\delta_0/2$ and $K_0:=M/C_0$ where $\delta_0>0$ and $C_0\geq 1$ are the constants in  Proposition \ref{uniform_}. Due to $C_0\geq 1$, we know $K_0\leq M$.
Consider the initial data $(\vp_0, \ps_0)$ with $
M_0\leq K_0
$. By using the constant $M$ to the local-existence result (Proposition \ref{zerolocal}),
  there exist  $T_0>0$,
and there is the  unique local solution $(\vp, \ps)$ on $[0,T_0]$ with $M(T_0)\leq 2M$. Due to
$M(T_0)\leq2M\leq \delta_0$, we can use the result of Proposition \ref{uniform_} to obtain $M(T_0)\leq C_0 M(0)= C_0 M_0$, which implies $M(T_0)\leq C_0 K_0\leq M$. Hence we can extend the solution from the time $T_0$  up to the time $2T_0$ by Proposition \ref{zerolocal} and we obtain
  $M(2T_0) \leq 2M\leq \delta_0$. Again by Proposition \ref{uniform_}, it implies $M(2T_0)\leq C_0M_0\leq M$. Thus we can repeat this process of the extension to get
  $M(kT_0)\leq C_0M_0$ for any $k\in\mathbb{N}$. 
\end{proof}

  In the rest of the paper, we focus on proving Proposition \ref{uniform_}.
 
\end{subsection}

\section{Uniform-in-time estimate: Proof of Proposition \ref{uniform_}}\label{sec_three_main}

Let $s>0$ and $\lambda>0$. 
Recall  the
Poincar\'{e} inequality on  intervals which says that there is a constant $C_{p}>0$ such that for any $\la>0$ and 
 for any $f\in W^{1,2}(0,\la)$, the inequality \begin{equation}\label{ineq:poincare2} 
\| f- \overline{f}\|_{L^2(0,\la)} \le \la  C_p \|f'\|_{L^2(0,\la)}  
\end{equation} holds. Here  the mean value $\overline{f}$ of $f$  is defined by $\overline{f}:=\frac 1 \la\int_0^\la f(y)dy$.
We assume that the product $s\cdot\la$ is small to have 
\begin{equation}\label{smallness_lemma0_-_la}
s\cdot\la\cdot C_{p} \leq \frac{1}{16}.
\end{equation} 
 From now on, these values $s>0$ and $\la>0$ are fixed until the end of the proof. 
Let's assume  $0<\ep_0\leq 1$  and $0<\delta_0\leq 1$ which will be taken sufficiently small later in the proof several times. \\

\indent We suppose first that  $\ep_0>0$ is sufficiently small so that any $\ep\in(0,\ep_0]$  meets the assumption of Theorem \ref{thm_NP}.
Let $(N,\calP)$ be a solution of \eqref{NP}
for some $\ep\in(0,\ep_0]$ with \eqref{zeroth} given by Theorem \ref{thm_NP}.
Let $(\varphi, \psi)$ be a local solution of \eqref{main_eq} on $[0,T]$ for some $T>0$ with $M(T)\leq\delta_0$.\\

\indent In the sequel,    
 $C$ denotes a positive constant which may change
from line to line, but which stays independent on ANY choice of $\ep\in(0,\ep_0)$ and $T>0$ as long as the positive parameters $\ep_0$ and $\delta_0$ are sufficiently small.\\
\subsection{Zero-th order estimate}\label{sec_zero}
\ \\
\begin{lemma}\label{lemma0_-}
 If the positive constants $\ep_0,  \delta_0 $ are sufficiently small, then
 there exists a constant 
  $C_1>0 $ such that 
 for any $t\in[0,T]$,

\begin{equation}\begin{split} \label{eq_lemma0_-}
&\| \psi \|^2 + \|{\vp}\|_w^2 + \int_0^t \|{\na \varphi}\|_w^2
+{\ep}\int_0^t \|\na \psi\|^2
+  \int_0^t\int\Big( \frac{(N')^2}{N^3}   {|\vp|^2} 
 +   \fn{\calP'}   {(\vp^1)^2} 
  +   \frac{ \calP N'}{N^2}   {(\vp^2)^2} \Big)
\\
&  \le C_1 \cdot( \| \psi_0\|^2 + \| {\varphi_0}\|_w^2 )
 +\ep\cdot  C_1\int_0^t \int\calP'|\psi|^2  
+ C_1\cdot{M(t)}\cdot \int_0^t 
 \|{\na \psi}\|_w^2.
\end{split}\end{equation}

\end{lemma}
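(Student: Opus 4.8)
The plan is to run the standard weighted energy method: multiply the $\varphi$-equation in \eqref{main_eq} by $w\varphi$ and the $\psi$-equation by $\psi$, integrate over $\Omega=\bbr\times[0,\la]$, and add. First I would treat the $\psi$-equation. Testing $\psi_t-s\psi_z-\ep\Del\psi=-2\ep P\cdot\na\psi-\ep|\na\psi|^2+\dv\varphi$ against $\psi$ gives, after integration by parts, $\frac12\frac{d}{dt}\|\psi\|^2+\ep\|\na\psi\|^2$ on the left (the $-s\psi_z$ term integrates to zero by periodicity in $y$ and decay in $z$), and on the right the troublesome linear term $-2\ep\int(P\cdot\na\psi)\psi=-2\ep\int\calP\psi_z\psi=\ep\int\calP'\psi^2$ (integrating by parts in $z$; recall $P=(\calP,0)$), the cubic term $-\ep\int|\na\psi|^2\psi$ which is absorbed into $\ep\|\na\psi\|^2$ at the cost of a factor $M(t)$ via $\|\psi\|_{L^\infty}\lesssim M(t)^{1/2}$ (Sobolev embedding, $H^3\hookrightarrow L^\infty$ in 2D), and the coupling term $\int\psi\,\dv\varphi=-\int\na\psi\cdot\varphi$ which we keep to cancel against a matching term from the $\varphi$-equation. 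This already produces the $\ep\int_0^t\int\calP'|\psi|^2$ term on the right of \eqref{eq_lemma0_-}, which the introduction flags as the main obstacle to be dealt with \emph{later} (in Lemma \ref{lemma0_temp}); here it is simply carried along.

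Next I would handle the $\varphi$-equation $\varphi_t-s\varphi_z-\Del\varphi=N\na\psi+P\dv\varphi+\dv\varphi\,\na\psi$ tested against $w\varphi$. The time term gives $\frac12\frac{d}{dt}\|\varphi\|_w^2$. The term $-\int\Del\varphi\cdot w\varphi=\int\na\varphi\cdot\na(w\varphi)=\|\na\varphi\|_w^2+\int w'\varphi_z\cdot\varphi$, and the last piece together with the transport term $-s\int\varphi_z\cdot w\varphi=\frac{s}{2}\int w'|\varphi|^2$ and the contribution of $P\dv\varphi$ produce the localized quadratic good terms $\int\big(\frac{(N')^2}{N^3}|\varphi|^2+\fn{\calP'}(\vp^1)^2+\frac{\calP N'}{N^2}(\vp^2)^2\big)$ after using $w=1/N$, the identity \eqref{NP_relation} $-N'/N=s+\calP$, and Lemma \ref{lem_NP} to control $w'$, $w''$ by $w$; here one must carefully track the component structure, since $P\dv\varphi=(\calP\dv\varphi,0)$ couples $\vp^1$ and $\vp^2$ asymmetrically (this is the non-symmetry alluded to in the introduction, but at zeroth order it still closes). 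The crucial cancellation is between $\int N\na\psi\cdot w\varphi=\int\na\psi\cdot\varphi$ (since $Nw=1$) and the $-\int\na\psi\cdot\varphi$ coming from the $\psi$-equation — these are equal and opposite, so they disappear upon adding the two energy identities. That cancellation is the whole reason the weight $w=1/N$ is chosen.

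Finally, the nonlinear term $\int\dv\varphi\,\na\psi\cdot w\varphi$ is estimated by Hölder and Sobolev embeddings: $\|\na\psi\|_{L^\infty}\lesssim M(t)^{1/2}$ and $\|w^{1/2}\varphi\|_{L^4}$, $\|w^{1/2}\dv\varphi\|$ bounded by weighted $H^1_w$/$H^2_w$ norms, yielding a bound of the form $C\,M(t)^{1/2}\big(\|\na\varphi\|_w^2+\|\varphi\|_w^2\big)$; the $\|\varphi\|_w^2$ piece is not yet absorbable, but that is acceptable because the final inequality \eqref{eq_lemma0_-} is \emph{not} a closed Gronwall estimate — it is one ingredient, and the term $C M(t)\int_0^t\|\na\psi\|_w^2$ on its right is likewise carried forward. (Here one uses $\|\na\psi\|^2\le\|\na\psi\|_w^2$ is false in general since $w\geq 1/((1+\ep)s^2)$ gives the reverse, but $\ep\|\na\psi\|^2\leq\ep(1+\ep)s^2\|\na\psi\|_w^2$, absorbing constants into $C_1$; alternatively keep $\ep\int_0^t\|\na\psi\|^2$ as written.) Integrating the combined differential inequality in time from $0$ to $t$ and collecting terms gives \eqref{eq_lemma0_-} with a constant $C_1$ independent of $\ep\in(0,\ep_0)$ and $T$. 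The main genuine difficulty at this stage is book-keeping: verifying that the coefficient of each localized quadratic term has the correct sign after combining the $w'$, $w''$, $s$, and $P$ contributions, which relies essentially on monotonicity of the waves ($N'<0$, $\calP'>0$, $-s<\calP<0$) from Theorem \ref{thm_NP} and the derivative bounds of Lemma \ref{lem_NP}; the analytically serious obstacle — absorbing $\ep\int_0^t\int\calP'|\psi|^2$ — is deferred to a subsequent lemma and does not appear here.
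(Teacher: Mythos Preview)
Your overall architecture is right --- test the $\varphi$-equation against $w\varphi$, the $\psi$-equation against $\psi$, add, and use the cancellation $\int N\na\psi\cdot w\varphi + \int\psi\,\dv\varphi = 0$ coming from $Nw=1$. The treatment of the $\ep$-terms in the $\psi$-equation is also correct. However, there are two genuine gaps.

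The serious one is your handling of the quadratic term $\int\frac{\calP}{N}\varphi^1\,\dv\varphi$. Expanding $\dv\varphi=(\vp^1)_z+(\vp^2)_y$, the $(\vp^1)_z$ part integrates by parts in $z$ and feeds the localized good terms, but the cross piece
\[
\int\frac{\calP}{N}\,\vp^1\,(\vp^2)_y
\]
has no sign and is \emph{not} localized: a naive Cauchy--Schwarz leaves you with $C\|\vp\|_w^2$ on the right, which is not absorbable and is not in the statement of \eqref{eq_lemma0_-}. This is precisely where the smallness assumption $s\la C_p\le\frac{1}{16}$ enters: since $\int_0^\la(\vp^2)_y\,dy=0$ one may replace $\vp^1$ by $\vp^1-\overline{\vp^1}$, apply the Poincar\'e inequality \eqref{ineq:poincare2} in $y$, and bound the whole cross term by $s\la C_p\int\frac{|\na\varphi|^2}{N}\le\frac{1}{16}\|\na\varphi\|_w^2$, which is then absorbed. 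Your sentence ``at zeroth order it still closes'' is incorrect without this step; indeed this term is exactly the ``non-symmetric term $\int\frac{\calP}{N}\vp^1(\vp^2)_y$'' singled out in the introduction as the reason for the restriction on $s\la$.

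The second, smaller gap is in the cubic term $\int\frac{1}{N}(\dv\varphi)\,\varphi\cdot\na\psi$. Pulling $\|\na\psi\|_{L^\infty}$ out, as you do, leaves $C\sqrt{M(t)}\,\|\na\varphi\|_w\|\varphi\|_w$, whose bad half $C\sqrt{M(t)}\,\|\varphi\|_w^2$ is again neither absorbable nor present in \eqref{eq_lemma0_-}. The fix is to pull out $\|\varphi\|_{L^\infty}\lesssim\sqrt{M(t)}$ instead, which yields
\[
\int\frac{1}{N}|\dv\varphi||\varphi||\na\psi|
\le C\,M(t)\int\frac{|\na\psi|^2}{N}+\frac{1}{16}\int\frac{|\na\varphi|^2}{N},
\]
matching the $C_1 M(t)\int_0^t\|\na\psi\|_w^2$ term on the right of \eqref{eq_lemma0_-}.
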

 \begin{remark}\label{rem_difficulty} 
We note that the term $$\int_0^t\int\Big( \frac{(N')^2}{N^3}   {|\vp|^2} 
 +   \fn{\calP'}   {(\vp^1)^2} 
  +   \frac{ \calP N'}{N^2}   {(\vp^2)^2} \Big)$$ in the left-hand side of 
  \eqref{eq_lemma0_-} plays a role of dissipation on the zero-th order. This is  non-symmetric for $\vp_1$ and $\vp_2$ due to the non-symmetric structure of the main equation \eqref{main_eq}. In Lemma \ref{lemma0_temp}, these localized $L^2$-norms of $\vp$ will be used to control  $$\ep\int_0^t \int\calP'|\psi|^2$$   in the right-hand side of  \eqref{eq_lemma0_-}, which is 
a localized $L^2$-norm of $\psi$ multiplied by $\ep$.

 \end{remark}
\begin{proof}

We multiply  $\fn{\varphi}$ to the $\varphi$ equation and $\psi$ to the $\psi$ equation:

\ben\begin{split}
& \fn{1}\vp\cdot(  \varphi_t - s \varphi_z - \Del \varphi)
+ \psi( \ps_t - s \ps_z -\ep \Del \ps)\\
&
 =  \fn{1}\vp \cdot(N \na \psi + P\na \cdot \varphi + \na\cdot \varphi \na \psi )+ \psi(  -2\ep  P \cdot\na \ps -\ep  |\na \ps|^2 + \dv\varphi).
\end{split}\een  Thus we get 
\ben\begin{split}
& 
(\fn{1}|\vp|^2/2)_t - s(\fn{1}|\vp|^2/2)_z + s(\fn{1})'|\vp|^2/2+
\fn{1}\vp\cdot(  - \Del \varphi)\\
&\quad\quad +(|\psi|^2/2)_t
-s(|\psi|^2/2)_z
+\psi(   -\ep \Del \ps)\\
&
 =  
  \vp \cdot \na \psi+ 
  \fn{\PP}\vp^1\na \cdot \varphi
+ \fn{1}(\na\cdot \varphi) \vp \cdot  \na \psi 
  -2\ep\PP\psi \psi_z 
-\ep \psi |\na \ps|^2 
+\psi\dv\vp.
\end{split}\een

By integrating in space $\Omega$, we have
\begin{align*}
&\frac 12 \ddt \left( \int \fn{ |\varphi|^2} +\int  |\psi|^2 \right) 
+ \int\fn{   |\na \varphi |^2} + \ep\int |\na \psi|^2+ 
\frac s 2 \int |\varphi|^2 \left( \fn{1}\right)'\\
&=  -  \int \vp\cdot\vp_z \left(  \fn{1}\right)'+
\int \fn{\calP} \varphi^1 \na \cdot \varphi  
+  \int \fn{\varphi \cdot \na\psi} \na\cdot \varphi
-
 2\ep \int \calP \ps_z \psi
- \ep \int |\na \ps|^2 \ps.
\end{align*} 
Here we use the notation
$$   |\na \varphi |^2:= \sum_{i=1}^2 |\na \varphi^i|^2.$$


Recall the Sobolev embedding which gives us a constant $C_{SV}>0$ such that for any $f\in H^2(\Omega)$, the inequality 
$$\|f\|_{L^\infty}\leq C_{SV} \|f\|_{H^2}$$ holds.
 We control the cubic term:
\begin{equation}\begin{split}\label{cubic_0}
  \int \Big| \fn{\varphi \cdot \na\psi} \na\cdot \varphi\Big| & \le \Big( 4\| \vp\|_{L^{\infty}} ^2\int \fn{| \na \psi|^2} 
 + \frac{1}{16} \int \fn{|\na \vp|^2}\Big) \\ 
 & \le C\cdot {M(t)} \int \fn{| \na \psi|^2}  + \frac{1}{16}\int \fn{|\na \vp|^2}
\end{split}\end{equation}
 where we used $\|\vp\|_{L^\infty}\leq C_{SV} \|\vp\|_{H^2}
\leq C_{SV}\cdot s\sqrt{1+\ep_0} \|\vp\|_{H_w^2} 
  \leq C \sqrt{M(t)}$ due to \eqref{norm_comp} and $0<\ep\leq\ep_0\leq 1$.\\ 

  We control the quadratic term:
  \be\begin{split}\label{quad_control_0-}
  \int \fn{\calP} \varphi^1\na \cdot \varphi   &  =
  \int \fn{\calP} (\frac{(\vp^1)^2}{2})_z
  +\int  \fn{\calP}   \vp^1 (\vp^2)_y \\
   &  =
 - \int (\fn{\calP})'  \frac{(\vp^1)^2}{2} 
  +\int_\bbR \fn{\calP}(z)\int_0^\la (\vp^1(z,y)-\overline{\vp^1}(z))(\vp^2)_y(z,y)\,dydz\\
\end{split}\ee  where $\overline{\vp^1}(z):=\frac 1\la \int_0^\la \vp^1(z,y)dy$.
Note $\Big|\fn{\PP}\Big|\leq \frac{s}{N}$ and the
Poincar\'{e} inequality \eqref{ineq:poincare2} on an interval $(0,\la)$.
Thus we get
  \be\begin{split}\label{use_poincare}
  \Big|\int_\bbR \fn{\calP}(z)\int_0^\la (\vp^1(z,y)-\overline{\vp^1}(z))(\vp^2)_y(z,y)\,dydz \Big| &\leq sC_p\la\int_\bbR\frac{\|(\vp^1)_y(z)\|_{L^2(0,\la)} {\|(\vp^2)_y(z)\|_{L^2(0,\la)}}}{ {N}} dz \\
   &\leq sC_p\la\int \fn{|\nabla\vp|^2}\leq\frac{1}{16}\int \fn{|\nabla\vp|^2} 
\end{split}\ee where we used the assumption \eqref{smallness_lemma0_-_la}. 
 Then we have
 \begin{align*}
  \int \fn{\calP} \varphi^1\na \cdot \varphi    
   &  \leq 
 - \int (\fn{\calP})'  \frac{(\vp^1)^2}{2} 
  +\frac{1}{16}\int \fn{|\nabla\vp|^2}.
\end{align*}  

For the $\ep$-terms, we assume that $\delta_0>0$ is  small enough to get 
\begin{equation*}
C_{SV}s\sqrt{2}\sqrt{\delta_0}\leq 1/4.
\end{equation*}
Then, we have
\begin{align*}
 -2\ep \int \calP \ps_z \psi &
=\ep\int\calP'|\psi|^2\quad\mbox{and} \\
 - \ep \int |\na \ps|^2 \ps&
 \leq \frac{1}{4}\ep   \|\na\psi\|^2
\end{align*}
where we used $\|\psi\|_{L^\infty}\le C_{SV}\|\psi\|_{H^2}\leq C_{SV}s\sqrt{2}\|\psi\|_{H^2_w}
\leq C_{SV}s\sqrt{2}\sqrt{\delta_0}\leq \frac 1 4$.

Up to now, we have
\begin{align*}
&\frac 12 \ddt \left( \int \fn{ |\varphi|^2} +\int  |\psi|^2 \right) 
+ \frac{7}{8}\int\fn{   |\na \varphi |^2} + \frac{3}{4}\ep\int |\na \psi|^2 
\\
&\leq  \underbrace{ -  \int \vp\cdot\vp_z \left(  \fn{1}\right)'
-\frac s 2 \int |\varphi|^2 \left( \fn{1}\right)'
 - \int (\fn{\calP})'  \frac{(\vp^1)^2}{2}  }_{=:(*)} 
+\ep\int\calP'|\psi|^2  
 + C\cdot    {M(t)} \int \fn{| \na \psi|^2}.  
\end{align*}
We observe 
\begin{align*}
&  (*) =
 \int \vp\cdot\vp_z \frac{N'}{N^2}
  - \int (\fn{s+\calP})'  \frac{|\vp|^2}{2}
  +  \int (\fn{\calP})'  \frac{(\vp^2)^2}{2} 
  \\ &  \leq  
     \int |\vp||\vp_z |\frac{N'}{N^2}
  - \int \fn{\calP'}  \frac{|\vp|^2}{2}
   - \int \frac{(s+\calP   )(-N')}{N^2}  \frac{|\vp|^2}{2}
  +  \int \fn{\calP'}   \frac{(\vp^2)^2}{2}
  -   {\int \frac{ \calP N'}{N^2}  \frac{(\vp^2)^2}{2}}  
    \\ &  \leq 
      \frac{3}{4} \int \fn{|\vp_z|^2}
  +  \frac 13 \int |\vp|^2\frac{(N')^2}{N^3}       
   -  \int \frac{(N')^2}{N^3}  \frac{|\vp|^2}{2}
 - \int \fn{\calP'}   \frac{(\vp^1)^2}{2}
 - \int \frac{ \calP N'}{N^2}  \frac{(\vp^2)^2}{2} 
    \\ &  \leq 
      \frac{3}{4} \int \fn{|\na\vp|^2}
    -\Big(\frac 16 \int \frac{(N')^2}{N^3}  \frac{|\vp|^2}{2}
 + \int \fn{\calP'}   \frac{(\vp^1)^2}{2}
  + \int \frac{ \calP N'}{N^2}  \frac{(\vp^2)^2}{2} \Big).
\end{align*}
Thanks to Theorem \ref{thm_NP}, we observe $$\frac{(N')^2}{N^3}  > 0, \,\fn{\calP'}  > 0\quad \,\mbox{and }\quad \frac{ \calP N'}{N^2}> 0.$$ Thus we get 
 \begin{equation*}\begin{split}
&\frac 12 \ddt \left( \int \fn{ |\varphi|^2} +\int  |\psi|^2 \right) 
+ \frac{1}{8}\int\fn{   |\na \varphi |^2} + \frac{3}{4}\ep\int |\na \psi|^2 \\&\quad\quad +\frac 16 \int \frac{(N')^2}{N^3}  \frac{|\vp|^2}{2}
 + \int \fn{\calP'}   \frac{(\vp^1)^2}{2}
  + \int \frac{ \calP N'}{N^2}  \frac{(\vp^2)^2}{2}
\\
&\leq   \ep\int\calP'|\psi|^2  
  +C\cdot   {M(t)} \int \fn{| \na \psi|^2}. 
\end{split}\end{equation*} 
Integrating in time gives the lemma.
\end{proof}

\begin{lemma}\label{lemma0_temp}

 If the positive constants $\ep_0,  \delta_0 $ are sufficiently small, then there exists a constant 
  $C_2
>0$ such that 
 for any $t\in[0,T]$,

 \ben
 \int_0^t\int \PP'|\psi|^2
 \leq
C_2\cdot [\mbox{LHS of } \eqref{eq_lemma0_-}  ]
   + 
   C_2\cdot 
     M(t) \int_0^t\int \fn{| \na \psi|^2}.   
\een
Here LHS is shorthand for the left-hand side.

\end{lemma}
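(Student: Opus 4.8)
The plan is to estimate $\int_0^t\int\PP'|\psi|^2$ by \emph{trading $\psi$ against $\vp^1$} through the coupling in \eqref{main_eq}, paying for the trade with the zeroth-order dissipations of $\vp$ already present on the left of \eqref{eq_lemma0_-}; this is precisely what will let Lemma~\ref{lemma0_-} absorb its own right-hand side term $\ep\,C_1\int_0^t\int\PP'|\psi|^2$ once $\ep$ is small. First I would integrate by parts in $z$ (using that $\PP$ is bounded and $\psi(\cdot,\cdot,t)\in H^3$) to write $\int\PP'\psi^2=-2\int\PP\,\psi\,\psi_z$. Next, from the $x$-component of the first equation of \eqref{main_eq} one has $N\psi_z=\vp^1_t-s\vp^1_z-\Del\vp^1-\PP\,\dv\vp-(\dv\vp)\,\psi_z$, so that (equivalently, using the cross-quantity $\frac{d}{dt}\int\frac{\PP}{N}\,\vp^1\psi$)
\[
\int\PP'\psi^2=-2\!\int\!\frac{\PP}{N}\,\psi\big(\vp^1_t-s\vp^1_z-\Del\vp^1\big)+2\!\int\!\frac{\PP^2}{N}\,\psi\,\dv\vp+2\!\int\!\frac{\PP}{N}\,\psi\,(\dv\vp)\,\psi_z .
\]
What makes this identity usable is the profile relation $N=-s\PP+\ep(\PP^2-\PP')$, coming from the second equation of \eqref{NP} with the boundary data \eqref{zeroth}: it shows that the ratios $\PP/N$ and $\PP^2/N$ that multiply $\psi$ here are \emph{bounded uniformly for $\ep\in(0,\ep_0]$}, so the weight $1/N$ — which blows up as $z\to+\infty$ and against which $\psi$ is \emph{not} controlled — disappears after the substitution.

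Then I would integrate in time. In the $\vp^1_t$-term I integrate by parts in $t$: the boundary contributions are $\lesssim\|\psi\|^2+\|\vp\|_w^2$ at time $t$ (both on the left of \eqref{eq_lemma0_-}) plus the data $\|\psi_0\|^2+\|\vp_0\|_w^2$, and in the remaining $\int_0^t\!\int(\cdot)\,\psi_t\,\vp^1$ one substitutes $\psi_t$ from the second equation of \eqref{main_eq}. Every piece that is linear in $\psi$ (or $\na\psi$) and in a first derivative of $\vp$ is then integrated by parts in $z$ so that the $\vp$-factor is paired with $\sqrt w$ and absorbed into $\int_0^t\|\na\vp\|_w^2$ (using $N\le(1+\ep)s^2$ and the bounds of Lemma~\ref{lem_NP}), while the companion $\psi$-quadratic is matched either against the localized dissipation $\int_0^t\int\frac1N\PP'(\vp^1)^2$, when its coefficient is $\lesssim\PP'/N$, or is carried along as a reappearing multiple of $\int_0^t\int\PP'\psi^2$ with coefficient $\lesssim\ep+\eta$ and sent back to the left. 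The genuinely cubic contributions — notably $\int_0^t\int\frac{\PP}{N}\psi(\dv\vp)\psi_z$ and all the $\ep$-corrections produced by $\psi_t$ — are estimated by the weighted Young inequality balanced at $\tfrac1{2\sqrt{M(t)}}$ against $\tfrac{\sqrt{M(t)}}{2}$, using $\|\psi\|_{L^\infty}+\|\sqrt w\,\dv\vp\|_{L^\infty}\lesssim\sqrt{M(t)}$ in the spirit of \eqref{cubic_0}; they yield exactly the admissible remainder $C\,M(t)\int_0^t\int\frac1N|\na\psi|^2$ together with a multiple of the left-hand side of \eqref{eq_lemma0_-}.

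The delicate point is the leftover terms of the form $\int_0^t\!\int(\text{bounded})\,\psi^2$ that these estimates still leave: no non-localized $L^2$-in-time norm of $\psi$ sits on the left of \eqref{eq_lemma0_-} (only $\ep\int_0^t\|\na\psi\|^2$ does), so such a $\psi$ must be tamed by splitting $\psi=\overline\psi+\psi^\perp$ into its $y$-average and its mean-zero part. For $\psi^\perp$ the Poincar\'e inequality \eqref{ineq:poincare2} gives $\int_0^t\|\psi^\perp\|^2\le(C_p\la)^2\int_0^t\|\na\psi\|^2\le C\int_0^t\int\frac1N|\na\psi|^2$, which — carrying the $\ep$ that Lemma~\ref{lemma0_-} puts in front of $\int_0^t\int\PP'\psi^2$ — is harmlessly absorbed, and this is exactly where the smallness of $s\cdot\la$ from \eqref{smallness_lemma0_-_la} enters. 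The $y$-average $\overline\psi$ solves a one-dimensional equation, namely the $y$-average of the second equation in \eqref{main_eq}, in which $\overline{\dv\vp}=\partial_z\overline{\vp^1}$ because $\overline{\partial_y\vp^2}=0$; so the one-dimensional mechanism of \cite{LiLiWa} — trading $\overline\psi$ against the localized dissipations of $\overline{\vp^1}$ — closes the estimate for $\int_0^t\int\PP'\overline\psi^2$. I expect the main obstacle to be exactly this bookkeeping: keeping straight which products of $N,\PP$ and their derivatives (such as $\PP'/N$ and $(\PP/N)'$, which one must show are $O(1)$, indeed $O(\ep)$, rather than only $O(1/N)$) stay bounded uniformly in $\ep$, and threading the $s\la$-smallness through the Poincar\'e step so that every non-localized occurrence of $\psi$ is ultimately paid for either by an $\ep$-factor or by $M(t)\int_0^t\int\frac1N|\na\psi|^2$.
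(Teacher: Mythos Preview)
Your route is genuinely different from the paper's and considerably more delicate. The paper does \emph{not} substitute $\psi_z$ from the $\vp^1$-equation; instead it tests the $\psi$-equation with $\PP\psi$ and the $\vp$-equation with $\frac{\PP}{N}\vp$. The transport term $-s\psi_z$ against $\PP\psi$ produces $\tfrac{s}{2}\int\PP'\psi^2$ directly, while the coupling terms $\PP\psi\,\dv\vp$ and $\PP\vp\cdot\na\psi$ combine (after one integration by parts in $z$) into $-\int\PP'\psi\,\vp^1$, which Young's inequality splits as $\tfrac{s}{8}\int\PP'\psi^2+\tfrac{2}{s}\int\PP'(\vp^1)^2$. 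The second piece is immediately bounded by the localized dissipation $\int\tfrac{\PP'}{N}(\vp^1)^2$ already on the left of \eqref{eq_lemma0_-}, since $N\le(1+\ep)s^2$. No cross-energy, no $\PP/N$, no $y$-averaging, and no appeal to the one-dimensional result are needed.

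Your scheme, by contrast, hinges on uniform-in-$\ep$ bounds for $\PP/N$, $\PP'/N$ and $(\PP/N)'$ that the paper never establishes (and your expectation that $(\PP/N)'=O(\ep)$ is not obvious: writing $(\PP/N)'=\ep\,[-\PP^2\PP'-(\PP')^2+\PP\PP'']/N^2$ still requires $\PP'/N$ and $\PP''/N$ bounded, i.e.\ sharp decay of $\PP',\PP''$ matching that of $N$). More seriously, after your time-IBP and the substitution of $\psi_t$ you are left with contributions such as $-2s\int_0^t\!\int(\PP/N)'\psi\,\vp^1$ and the $\psi^\perp$-piece that you bound by $C\int_0^t\|\na\psi\|^2$; neither is dominated by the left side of \eqref{eq_lemma0_-} (only $\ep\int_0^t\|\na\psi\|^2$ lives there), so your estimate would not prove the lemma as stated --- at best it proves a weaker inequality that only closes \emph{after} multiplying by $\ep$ in the subsequent combination. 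Finally, deferring the $\overline\psi$-part to ``the one-dimensional mechanism of \cite{LiLiWa}'' is not a proof: that mechanism \emph{is} the multiplier $\PP\psi$, which is exactly what the paper uses here in full 2D without any splitting. I recommend you redo the argument with the multipliers $\PP\psi$ and $\frac{\PP}{N}\vp$; the computation is short and avoids every obstacle you anticipate.
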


\begin{proof}

We multiply  $\fn{\PP}\vp$ to the $\varphi$ equation and $\PP\psi$ to the $\psi$ equation: 
\ben\begin{split}
& \fn{\PP}\vp\cdot(  \varphi_t - s \varphi_z - \Del \varphi)
+\PP\psi( \ps_t - s \ps_z -\ep \Del \ps)\\
&
 =  \fn{\PP}\vp \cdot(N \na \psi + P\na \cdot \varphi + \na\cdot \varphi \na \psi )+\PP\psi(  -2\ep  P \cdot\na \ps -\ep  |\na \ps|^2 + \dv\varphi).
\end{split}\een Thus we get  
\ben\begin{split}
& 
(\fn{\PP}|\vp|^2/2)_t - s(\fn{\PP}|\vp|^2/2)_z + s(\fn{\PP})'|\vp|^2/2+
\fn{\PP}\vp\cdot(  - \Del \varphi)\\
& +(\PP|\psi|^2/2)_t
-s(\PP|\psi|^2/2)_z+ s\PP'|\psi|^2/2
+\PP\psi(   -\ep \Del \ps)\\
&
 =  
  {\PP}\vp \cdot \na \psi+ 
  \fn{\PP^2}\vp^1\na \cdot \varphi
+ \fn{\PP}(\na\cdot \varphi) \vp \cdot  \na \psi   -2\ep\PP^2\psi \psi_z 
-\ep \PP\psi |\na \ps|^2 
+\PP\psi\dv\vp.
\end{split}\een
Then we integrate on $\Omega$ to get
\ben\begin{split}
& 
\frac{d}{dt}\int(\fn{\PP}|\vp|^2/2+ \PP|\psi|^2/2) 
 +s\int \PP'|\psi|^2/2
\\
&
 = 
-\int(\fn{\PP})'\vp_z\cdot\vp - s\int(\fn{\PP})'|\vp|^2/2
   -\ep\int\PP'\psi\psi_z
- \int\fn{\PP}|\na\vp|^2     
      -\ep\int\PP|\na\psi|^2\\
   &
-\int \PP'\psi\vp^1
 +\int \fn{\PP^2}\vp^1\na \cdot \varphi
+ \int\fn{\PP}(\na\cdot \varphi) \vp \cdot  \na \psi  
 -2\ep\int\PP^2\psi \psi_z 
-\ep \int\PP\psi |\na \ps|^2.
\end{split}\een

For the cubic term  as in \eqref{cubic_0}  with $|\PP|\leq s$,   we have
\begin{align*}
\Big| \int\fn{\PP}(\na\cdot \varphi) \vp \cdot  \na \psi \Big|\leq   s\int \Big| \fn{\varphi \cdot \na\psi} \na\cdot \varphi\Big| 
& \le  C\cdot  {M(t)} \int \fn{| \na \psi|^2}  +C\int \fn{|\na \vp|^2}.
\end{align*}

For the quadratic term, we get
\ben\begin{split}
&  
 - \int\fn{\PP}|\na\vp|^2  
 \leq C \int \fn{|\na\vp|^2}\quad \mbox{ and}
\end{split}\een
\ben\begin{split}
&  
-\int \PP'\psi\vp^1\leq \frac{s}{8}\int \PP'|\psi|^2
+\frac{2}{s}\int \PP'|\vp^1|^2
\leq \frac{s}{8}\int \PP'|\psi|^2
+C\int \fn{\PP'}|\vp^1|^2.
\end{split}\een
As we did in and after \eqref{quad_control_0-},  
\ben\begin{split}
&  
\int \fn{\PP^2}\vp^1\na \cdot \varphi 
\leq 
 - \int (\fn{\calP^2})'  \frac{(\vp^1)^2}{2} 
    +C\int \fn{|\nabla\vp|^2}\\
\end{split}\een by using  \eqref{smallness_lemma0_-_la}.


For $\ep-$terms, we assume that  $\ep_0>0$ is smaller than $\ep_1$ in Lemma 
\ref{lem_NP}. Then
we estimate
\ben\begin{split}
&  
  -2\ep\int\PP^2\psi \psi_z =\ep\int(\PP^2)'|\psi|^2=
  2\ep\int\PP \PP'|\psi|^2\leq 0,
\end{split}\een
\ben\begin{split}
&  
  -\ep\int\PP|\na\psi|^2
  \leq C\ep   \int |\na\psi|^2,
\end{split}\een
\ben\begin{split}
&  
 -\ep \int\PP\psi |\na \ps|^2 \leq  \ep s \int|\psi| |\na \ps|^2
 \leq  C_{SV}\cdot \ep s \sqrt{M(t)} \int |\na \ps|^2
   \leq  C \cdot\ep   
  \int |\na \ps|^2,\quad \mbox{ and}
\end{split}\een
\ben\begin{split}
 -\ep\int\PP'\psi\psi_z&\leq \frac{s}{4}\int \PP'|\psi|^2+\frac{\ep^2}{s}\int \PP'|\psi_z|^2
 \leq \frac{s}{4}\int \PP'|\psi|^2+\frac{\ep^2L }{s}\int |\na\psi|^2\\ &
  \leq \frac{s}{4}\int \PP'|\psi|^2+C\cdot\ep\int |\na\psi|^2
\end{split}\een  where we used $\delta_0\leq1$, $\ep_0\leq1$,
and $|\calP'|\leq L$ where $L$ is the constant 
 in Lemma \ref{lem_NP}. 

Up to now, we have
\ben\begin{split}
& 
\frac{d}{dt}\int(\fn{\PP}|\vp|^2/2+ \PP|\psi|^2/2) 
 +\frac{s}{8}\int \PP'|\psi|^2
\\
&
 \leq  
\underbrace{ - s\int(\fn{\PP})'|\vp|^2/2
 - \int (\fn{\calP^2})'  \frac{(\vp^1)^2}{2}-\int(\fn{\PP})'\vp_z\cdot\vp }_{:=(**)}
  \\
&\quad
  +C\cdot \ep
  \int |\na\psi|^2
 + C
 \int\fn{ |\na\vp|^2}\\ 
   & \quad
+ C
\int \fn{\PP'}|\vp^1|^2
 +   
C\cdot 
 M(t) \int \fn{| \na \psi|^2}.  
\end{split}\een

For the first two terms in $(**)$, we have 
\ben\begin{split}    
&  - s\int(\fn{\PP})'|\vp|^2/2
 - \int (\fn{\calP^2})'  \frac{(\vp^1)^2}{2} 
\\ &
=-s\int\fn{\PP'}|\vp|^2/2
+s\int \frac{\PP N'}{N^2}|\vp|^2/2
-\int \fn{2\PP\PP'} \frac{(\vp^1)^2}{2}
-\int \PP^2(\fn{1})' \frac{(\vp^1)^2}{2}
 \\ &
 \leq {-s\int\fn{\PP'}(\vp^2)^2/2} +\frac{s}{2}\int \fn{\PP'} {(\vp^1)^2}
 +s\int \frac{\PP N'}{N^2}(\vp^2)^2/2
 +\int \frac{(s+\PP)\PP N'}{N^2}(\vp^1)^2/2
  \\ &
\leq   -s\int\fn{\PP'}(\vp^2)^2/2   +C\int \fn{\PP'} {(\vp^1)^2}
 +C\int \frac{\PP N'}{N^2}(\vp^2)^2/2
+C\int \frac{ (N')^2}{N^3}(\vp^1)^2/2.
\end{split}\een 

For the last term in $(**)$, we have
\ben\begin{split} 
 &-\int(\fn{\PP})'\vp_z\cdot\vp  =
 -\int(\fn{\PP'}-\frac{\PP N'}{N^2})\vp_z\cdot\vp \\ &
 =-\int\fn{\PP'}(\vp^1)_z\cdot\vp^1
 -\int\fn{\PP'}(\vp^2)_z\cdot\vp^2
 +\int \frac{\PP N'}{N^2}\vp_z\cdot\vp 
 \\ &
 \leq (\frac{1}{2}+\frac{1}{s})\int\fn{\PP'}|\vp_z|^2 
 + \frac 12\int\fn{\PP'}(\vp^1)^2
+ \frac {s}{4}\int\fn{\PP'}(\vp^2)^2
 +  \frac 12 \int \fn{|\PP|^2|\vp_z|^2} + \frac 12 \int \frac{(N')^2}{N^3}|\vp|^2
  \\ &
 \leq ((\frac{1}{2}+\frac{1}{s})L+\frac{L^2}{2})\int\fn{|\na \vp|^2} + \frac 12\int\fn{\PP'}(\vp^1)^2
  + \frac 12 \int \frac{(N')^2}{N^3}|\vp|^2
+ \frac s4\int\fn{\PP'}(\vp^2)^2
 \\ &
 \leq C\int\fn{|\na \vp|^2} + C\int\fn{\PP'}(\vp^1)^2
  +   C \int \frac{(N')^2}{N^3}|\vp|^2
+ \frac s4\int\fn{\PP'}(\vp^2)^2.
\end{split}\een 

We combine the above two computations to get
\ben\begin{split} 
  (**) &
 \leq 
\underbrace{-\frac{s}{4}\int\fn{\PP'}(\vp^2)^2/2}_{\leq0} 
    +C\int\fn{|\na \vp|^2}  
 +  C\int\fn{\PP'}(\vp^1)^2
  + C \int \frac{(N')^2}{N^3}|\vp|^2
   +C\int \frac{\PP N'}{N^2}(\vp^2)^2.
\end{split}\een

In sum, we have
\ben\begin{split}
& 
 \frac{s}{8}\int \PP'|\psi|^2
 \leq  -\frac{d}{dt}\int(\fn{\PP}|\vp|^2/2+ \PP|\psi|^2/2) 
 \underbrace{-\frac{s}{4}\int\fn{\PP'}(\vp^2)^2/2}_{\leq0} \\
 &\quad
 +C
 \int\fn{|\na \vp|^2}  
 + C
 \int\fn{\PP'}(\vp^1)^2
  + C
   \int \frac{(N')^2}{N^3}|\vp|^2
   +C
   \int \frac{\PP N'}{N^2}(\vp^2)^2
  \\
&\quad
  +C\cdot \ep
  \int |\na\psi|^2
 + C\cdot 
 M(t) \int \fn{| \na \psi|^2}.  
\end{split}\een 
By taking integral in time, we have the lemma
since $\PP<0$ and $|\PP|\leq s$ implies 
\ben\begin{split}
-\int_0^t\frac{d}{dt}\int(\fn{\PP}|\vp|^2/2+ \PP|\psi|^2/2) 
&=
\underbrace{\int(\fn{\PP}|\vp_0|^2/2+ \PP|\psi_0|^2/2)}_{\leq 0} 
-\int(\fn{\PP}|\vp(t)|^2/2+ \PP|\psi(t)|^2/2)\\ &
\leq C
(\|\vp(t)\|_w^2+\|\psi(t)\|^2).
\end{split}\een 

\end{proof}
We combine Lemma \ref{lemma0_-} with Lemma \ref{lemma0_temp}
   in the following way:
We assume  $\ep_0>0$ small enough to have
\ben
\ep_0C_1C_2\leq \frac{1}{2}
\een where $C_1$ is from Lemma \ref{lemma0_-} and $C_2$ is from Lemma \ref{lemma0_temp}. Then add [$\ep C_1\cdot$(the resulting estimate of Lemma \ref{lemma0_temp})] to \eqref{eq_lemma0_-} 
to get 
\ben 
\frac{1}{2}\cdot(\mbox{LHS of } \eqref{eq_lemma0_-}  )
\leq C_1 \cdot( \| \psi_0\|^2 + \|  {\varphi_0}\|_w^2 )
+ (C_1+ \ep_0 C_1  
 C_2
 )
\cdot{M(t)}\cdot \int_0^t 
 \| {\na \psi}\|_w^2. \een 
In sum, we have the following zero-th order estimate which hasn't been closed yet:
\begin{equation}\label{eq_lemma0_}
\| \psi \|^2 + \|{\vp}\|_w^2 + \int_0^t \| {\na \varphi}\|_w^2
+{\ep}\int_0^t \|\na \psi\|^2
\le C \cdot( \| \psi_0\|^2 + \|{\varphi_0}\|_w^2 )+ C\cdot{M(t)}\cdot \int_0^t 
 \|{\na \psi}\|_w^2.
\end{equation}


\subsection{First order estimate}\label{sec_first} \ \\

From now on, we estimate the derivatives of $\vp$ and $\psi$.

\begin{lemma}\label{lemma1_0} If the positive constants $\ep_0,  \delta_0 $ are sufficiently small, then there exists a constant 
  $C>0$ such that 
 for any $t\in[0,T]$, 
\begin{equation}\begin{split}\label{ineq:lem1_}
&\| \na \vp\|_w^2  + \| \na \ps\|^2+ \int_0^t \| \na^2 \vp\|_w^2 
+ \ep\int_0^t \|\na^2 \psi\|^2\\
&\le C( \| \na \vp_0\|_w^2 + \| \na \ps_0\|^2+  \| \psi_0\|^2 + \| \varphi_0\|_w^2 ) 
+  C \int_0^t \int {N} |\na \ps|^2 + C {M(t)} \int_0^t \int \fn{\abs{\na \ps}}.
\end{split}\end{equation}\end{lemma}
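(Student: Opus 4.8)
The plan is to run the same weighted $L^2$ scheme as in the proof of Lemma~\ref{lemma0_-}, one derivative higher. I would apply $\pa_z$ and $\pa_y$ to \eqref{main_eq}, multiply each equation for $\pa_k\vp$ ($k=z,y$) by $\fn{1}\pa_k\vp$ and each equation for $\pa_k\ps$ by $\pa_k\ps$, integrate over $\Om$, and sum. Since the weight $w=1/N$ depends on $z$ only, the $\pa_y$-block is structurally identical to the zeroth-order computation and generates no new commutators, while the $\pa_z$-block generates commutators in which $\pa_z$ falls on the $z$-dependent coefficients $N$ and $P=(\PP,0)$. As at the zeroth order, the leading coupling cancels, $\int\fn{1}\pa_k\vp\cdot N\na\pa_k\ps+\int\pa_k\ps\,\dv\pa_k\vp=\int\na\cdot(\pa_k\ps\,\pa_k\vp)=0$, and the $P$-convection terms $\int\fn{\PP}\pa_k\vp^1\,\dv\pa_k\vp$ are handled exactly as the quadratic term in \eqref{quad_control_0-}--\eqref{use_poincare}: an integration by parts in $z$ produces lower-order contributions $\lesssim\|\na\vp\|_w^2$, plus a transversal cross term absorbed into the dissipation $\|\na^2\vp\|_w^2$ by the Poincar\'e inequality \eqref{ineq:poincare2} and the smallness \eqref{smallness_lemma0_-_la}. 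The diffusion terms yield the dissipations $\|\na^2\vp\|_w^2$ and $\ep\|\na^2\ps\|^2$ together with a commutator $-\tfrac12\int w''|\na\vp|^2\lesssim\|\na\vp\|_w^2$ (since $|w''|\le Lw$ by Lemma~\ref{lem_NP}), while the $-s$-convection produces the favorable term $\tfrac s2\int w'|\na\vp|^2\ge0$. The cubic terms coming from $\dv\vp\,\na\ps$ and $-\ep|\na\ps|^2$ in \eqref{main_eq} are estimated by the Sobolev embedding $\|\cdot\|_{L^\infty}\lesssim\|\cdot\|_{H^2}$ as in \eqref{cubic_0}, integrating by parts once whenever a factor $\na^2\ps$ would otherwise appear, and contribute $\tfrac1{16}$ of the dissipations plus terms $\lesssim M(t)\big(\|\na\vp\|_w^2+\|\na\ps\|_w^2\big)$.

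Two genuinely new terms survive in the $\pa_z$-block, both arising from $\pa_z$ hitting $N\na\ps$ and $P\dv\vp$. The $\PP'$-commutator $\int\fn{\PP'}\vp_z^1\,\dv\vp$ is harmless: $|\PP'|\le L$ by Lemma~\ref{lem_NP} gives $\lesssim\|\na\vp\|_w^2$. The $N'$-commutator $\int\frac{N'}{N}\vp_z\cdot\na\ps$ is the delicate one; using $|N'/N|=s+\PP\le s$ by \eqref{NP_relation} and Theorem~\ref{thm_NP}, and crucially splitting so that $\na\ps$ is paired with a factor $\sqrt N$ rather than $1/\sqrt N$,
\[
\Big|\int\frac{N'}{N}\vp_z\cdot\na\ps\Big|\;\le\;\frac s2\,\|\vp_z\|_w^2+\frac s2\int N\,|\na\ps|^2,
\]
where the first term is lower order and the second is exactly the quantity $\int N|\na\ps|^2$ appearing on the right-hand side of \eqref{ineq:lem1_}.

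The main obstacle is the non-symmetric $\ep$-linear term generated by $-2\ep P\cdot\na\ps$. Differentiating it in $z$ and testing against $\ps_z$ gives, after an integration by parts, the harmless $-\ep\int\PP'|\ps_z|^2\le0$; but differentiating it in $y$ and testing against $\ps_y$ gives $+\ep\int\PP'|\ps_y|^2\ge0$, which has no sign, and in the one-dimensional setting of \cite{LiLiWa} it simply does not occur. I would control it by the pointwise bound $0<\PP'\le N+\tfrac{\ep L}{s}$, which follows from the equations \eqref{NP} (using \eqref{NP_relation} to replace $N'=-N(s+\PP)$) together with $0<s+\PP<s$, $-s<\PP<0$ and $|\PP''|\le L$ from Theorem~\ref{thm_NP} and Lemma~\ref{lem_NP}; this bounds the bad term by $\int N|\na\ps|^2+C\ep^2\|\na\ps\|^2$, and after time integration the second piece is $\lesssim\ep\int_0^t\|\na\ps\|^2$, which is already under control by the zeroth-order estimate \eqref{eq_lemma0_}. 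This is precisely the term that forces $\int_0^t\int N|\na\ps|^2$ onto the right-hand side of \eqref{ineq:lem1_}, to be closed in a later lemma, and that requires $\ep$ to be small. Finally, integrating in time, using \eqref{eq_lemma0_} to replace every occurrence of $\int_0^t\|\na\vp\|_w^2$ by $C(\|\ps_0\|^2+\|\vp_0\|_w^2)+C M(t)\int_0^t\|\na\ps\|_w^2$, and choosing $\ep_0,\delta_0$ small enough to absorb the small multiples of $\|\na^2\vp\|_w^2$ and $\ep\|\na^2\ps\|^2$ into the left-hand side, yields \eqref{ineq:lem1_}.
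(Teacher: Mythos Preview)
Your proposal is correct and follows the same overall energy scheme as the paper, but you overcomplicate two terms that the paper dispatches more simply. First, for the $P$-convection terms $\int\fn{\PP}\pa_k\vp^1\,\dv\pa_k\vp$ at first order, there is no need to repeat the Poincar\'{e} trick \eqref{quad_control_0-}--\eqref{use_poincare}: since both factors $\pa_k\vp$ and $\dv\pa_k\vp$ are already at derivative level $\geq 1$, a direct Cauchy--Schwarz $|\PP|\le s$ gives $\tfrac14\|\na^2\vp\|_w^2+C\|\na\vp\|_w^2$, and the lower-order piece is absorbed by \eqref{eq_lemma0_}. The Poincar\'{e} argument was essential at zeroth order only because $\|\vp\|_w$ had no independent control.

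Second, and more interesting, is the $\ep$-linear term $-2\ep\PP\ps_z$. You integrate by parts to reveal the signed pieces $-\ep\int\PP'|\ps_z|^2$ and $+\ep\int\PP'|\ps_y|^2$, and then invoke the structural bound $\PP'\le N+O(\ep)$ derived from the profile ODE to control the bad $\ps_y$ piece by $\int N|\na\ps|^2+C\ep^2\|\na\ps\|^2$. This is valid, but the paper avoids the issue entirely: it does \emph{not} integrate by parts, and instead bounds $-2\ep\int\PP\ps_{zy}\ps_y$ directly by $C\ep\|\PP\|_{L^\infty}\|\na\ps\|\|\na^2\ps\|\le C\ep\|\na\ps\|^2+\tfrac{\ep}{4}\|\na^2\ps\|^2$; the second piece is absorbed into the $\ep$-dissipation and the first is already controlled by \eqref{eq_lemma0_}. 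So what you call the ``main obstacle'' is in fact harmless with the cruder estimate---the term that genuinely forces $\int_0^t\int N|\na\ps|^2$ onto the right-hand side is only the $N'$-commutator $\int\fn{N'}\na\ps\cdot\vp_z$, which you handle correctly.
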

 
\begin{proof}
We differentiate \eqref{main_eq} in $z$ to get
\begin{align*}
\vp_{tz} - s\vp_{zz} - \Del \vp_z & = N' \na\psi + N \na \psi_z + P' \dvg\vp + P \dvg \vp_z + \na \psi_z \dvg \vp + \na\psi \dvg \vp_z,\\
\psi_{tz} -s\psi_{zz} -\ep \Del \ps_z & = -2\ep  (P \cdot\na \ps)_z -\ep  (|\na \ps|^2)_z  + \dvg \vp_z.
\end{align*}
We multiply  $\fn{\varphi_z}$ to $\vp$ equation and $\psi_z$ to the $\psi$ equation from above and do integration by parts to get
\begin{align*}
&\frac 12 \ddt \lr{ \int \fn {\abs{\vp_z} } + \abs{\ps_z}} + \int \sum_{ij}\fn{ \abs{\pa_j \vp^i_{z}}}
+ \ep\int |\na \psi_z|^2\\
 & = 
 { \frac 12 \int \abs{\vp_z} \left( \fn{1} \right)''- \frac s2 \int \abs{\vp_z} \Fn' }
 \\
 & +\int \fn{N'} \na \ps \vp_z  +\int \fn{P'} \dv \vp \vp_z + \int \fn{P} \dv \vp_z \vp_z\\
 &+  \int \na \ps_z \dv \vp \fn{\vp_z} + \int \na\ps \dv \vp_z \fn{\vp_z}\\
& -\underbrace{ 2\ep \int (P\cdot \na\ps)_z \psi_z }_
{=  2\ep \int (\calP \ps_z)_z \psi_z}
- \ep \int (|\na \ps|^2)_z \ps_z.
\end{align*}
Similarly,  
we get
\begin{align*}
&\frac 12 \ddt \lr{ \int \fn {\abs{\vp_y} } + \abs{\ps_y}} +\int \sum_{ij}\fn{ \abs{\pa_j \vp^i_{y}}} 
+ \ep\int |\na \psi_y|^2\\
   &=  
   { \frac 12 \int \abs{\vp_y} \left( \fn{1} \right)'' - \frac s2 \int \abs{\vp_y} \Fn' }
 \\
   &+\int \fn{P} \dv \vp_y \vp_y \\
   &+ \int \na \ps_y \dv \vp \fn{\vp_y} + \int \na\ps \dv \vp_y \fn{\vp_y}\\
  & -\underbrace{ 2\ep \int (P\cdot \na\ps)_y \psi_y }_
{=  2\ep \int \calP \ps_{zy} \psi_y}
- \ep \int (|\na \ps|^2)_y \ps_y.
\end{align*}

First, we observe
\begin{align*}
\frac 12 \int |\na\vp|^2 \lr{ \fn{1}}''\underbrace{-\frac s 2 \int |\na\varphi|^2 \left( \fn{1}\right)'}_{\leq0}
&\le -\int \na\vp\cdot\na\vp_z  \left( \fn{1}\right)' 
\le C \int |\na\vp||\na\vp_z |  \left( \fn{1}\right) \\&
\le  \frac{1}{8}\int  |\na^2\vp |^2\left( \fn{1}\right)+C \int |\na\varphi|^2  \left( \fn{1}\right).
 \end{align*}

We estimate the quadratic terms as follows:
\begin{align*}
\int \fn{P} \dv \vp_z \vp_z+\int \fn{P} \dv \vp_y \vp_y  \le
 \| P\|_{L^{\infty}} (\|  \frac{\na\vp_z}{ \sqrt{N}} \|\| \frac{\vp_z}{\sqrt{N}}\| 
+\|    \frac{\na\vp_y}{ \sqrt{N}} \|\| \frac{\vp_y}{\sqrt{N}}\| )
 \le
\frac{1}{4}\|  \frac{\na^2\vp}{ \sqrt{N}} \|^2+C\| \frac{\na \vp}{\sqrt{N}}\|^2,
\end{align*}
\begin{align*}
\int \fn{N'} \na \ps \vp_z & \le \|s+\PP\|_{L^{\infty}}\| {\sqrt N}{\na\ps}\| \| \frac{\vp_z}{ \sqrt{N}} \|
\le C\| {\sqrt N}{\na\ps}\|^2+ C \| \frac{\na\vp}{ \sqrt{N}} \|^2\quad \mbox{and}\\
\int \fn{P'} \dv \vp \vp_z  &\le 
C\| P'\|_{L^{\infty}} \| \frac{\na \vp}{\sqrt N} \|^2
\le 
C\|\frac{\na \vp}{\sqrt N}  \|^2.
\end{align*}

The sum of all cubic terms is bounded by 
\begin{align*}
C\int  | \na\ps|| \na\vp| \fn{|\na^2\vp|} 
+C\int  | \na\ps|| \na\vp|^2 \underbrace{|({1}/{N})'|}_{\leq C/{N}}
& \le C \sqrt{M(t)} ( \| \fnn{\na^2 \vp} \|^2  + \| \fnn{\na \vp} \|^2)\\
& \le \frac{1}{4}  \| \fnn{\na^2 \vp}\|^2 + C\| \fnn{\na \vp}\|^2
\end{align*}
by  $ \| \na\ps\|_{L^{\infty}} \le C\sqrt{M(t)}\le C\sqrt{\delta_0}$ and by assuming $\delta_0>0$ small enough.\\

For the $\ep$-terms, 
 we estimate
\begin{align*}
&-2\ep \int (\calP \ps_z)_z \psi_z  - 2\ep \int \calP \ps_{zy} \psi_y=
  -2\ep \Big(\int \calP' \ps_z \psi_z
+\int \calP \ps_{zz} \psi_z 
 +\int \calP \ps_{zy} \psi_y \Big)\\
  &\leq C\ep \|P\|_{L^\infty}\|\na\ps\|\|\na^2\ps\|+C\ep \|P'\|_{L^\infty}\|\na\ps\|^2  \\
    &\leq C\ep  \|\na\ps\|^2+\frac{\ep}{4}\|\na^2\ps\|^2   
\end{align*}
and

\begin{align*}
&- \ep \int (|\na \ps|^2)_z \ps_z  
- \ep \int (|\na \ps|^2)_y \ps_y=
  \ep \int (|\na \ps|^2) \ps_{zz}  
+ \ep \int (|\na \ps|^2) \ps_{yy}\\
&\leq  C\ep \sqrt{M(t)} \int |\na \ps||\na^2 \ps|
\leq  C\ep \|\na \ps\|^2+\frac{\ep}{4}\|\na^2 \ps\|^2.
\end{align*}
Adding up all the estimates above, we have 
\begin{align*}
&\frac 12 \ddt \lr { \int \fn{\abs{\na \vp}} + \int \abs{\na \ps} } + \frac 14\int   \fn{\abs{ \na^2\vp } }
+ \frac{\ep}{4}\int |\na^2 \psi|^2
\le  
C\|{\sqrt{N}} {\na \ps}\|^2  + C   \| \frac{\na \vp}{\sqrt{N}}\|^2 
 +C\ep \|\na \ps\|^2.
\end{align*}
After integration in time, thanks to  \eqref{eq_lemma0_}, we can control the last two terms above so that we arrive at \eqref{ineq:lem1_}.
\end{proof}

\begin{lemma}\label{lemma1_1}  If the positive constants $\ep_0,  \delta_0 $ are sufficiently small, then there exists a constant 
  $C>0$ such that 
 for any $t\in[0,T]$, 
 \begin{align} \label{claim1_lemma1_}
& \int_0^t \int N |\na \ps|^2 +\ep\int_0^t \int  |\na^2 \ps|^2 \le C \lr{ \| \na \ps_0 \|^2 + \| \ps _0\|^2 + \| \vp_0\|_w^2} 
+ C\sqrt{M(t)} \int_0^t  \int \fn{ |\na \ps |^2}.
\end{align}
\end{lemma}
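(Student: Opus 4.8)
The plan is to produce the ``chemical dissipation'' $\int_0^t\int N\abs{\na\psi}$, which is not generated by the natural energy estimate for the $\psi$--equation alone (the diffusion there being only $\ep\Del\psi$), by exploiting the coupling in \eqref{main_eq}: in the first equation of \eqref{main_eq} the term $N\na\psi$ acts as a source for $\vp$. Concretely, I would multiply the first equation of \eqref{main_eq} by $\na\psi$ and integrate over $\Om$, so that $\int N\na\psi\cdot\na\psi=\int N\abs{\na\psi}$ appears with the correct sign, alongside a time--derivative term $\int\vp_t\cdot\na\psi$, a convection term $-s\int\vp_z\cdot\na\psi$, a diffusion term $-\int\Del\vp\cdot\na\psi$, and the lower--order pieces $\int\calP(\dv\vp)\psi_z$ and $\int(\dv\vp)\abs{\na\psi}$. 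For the time--derivative term I would write $\int\vp_t\cdot\na\psi=\ddt\int\vp\cdot\na\psi+\int(\dv\vp)\psi_t$ and substitute $\psi_t$ from the second equation of \eqref{main_eq}; after integration by parts (using periodicity in $y$ and decay in $z$), $-s\int\vp_z\cdot\na\psi=-s\int(\dv\vp)\psi_z$ cancels the transport term $s\int(\dv\vp)\psi_z$ this substitution produces, leaving $\int(\dv\vp)^2$, the multiple $(1+\ep)\int(\dv\vp)\Del\psi$ (since also $-\int\Del\vp\cdot\na\psi=\int(\dv\vp)\Del\psi$), and $\ep$--small remainders.

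The term $(1+\ep)\int(\dv\vp)\Del\psi$ carries no small parameter, and integrating it by parts only trades it for $-\int\na(\dv\vp)\cdot\na\psi$, which still costs a fixed multiple of $\int\abs{\na\psi}/N$. To cancel it I would simultaneously multiply the second equation of \eqref{main_eq} by $-\Del\psi$ and integrate, obtaining
\[
\frac12\ddt\|\na\psi\|^2+\ep\int\abs{\Del\psi}=2\ep\int(\calP\psi_z)\Del\psi+\ep\int\abs{\na\psi}\Del\psi-\int(\dv\vp)\Del\psi ,
\]
where $s\int\psi_z\Del\psi=0$ was used. Adding $(1+\ep)$ times this identity to the previous one makes the $\int(\dv\vp)\Del\psi$ contributions cancel and, because $\int\abs{\na^2\psi}=\int\abs{\Del\psi}$ on $\Om$ (integration by parts), produces the gain $\ep\int\abs{\na^2\psi}$. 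Integrating in time, the combined identity has the schematic form
\[
\int_0^t\!\!\int N\abs{\na\psi}+\ep\int_0^t\!\!\int\abs{\na^2\psi}
=\Big[\int\vp\cdot\na\psi+\frac{1+\ep}{2}\|\na\psi\|^2\Big]_0^t
+\int_0^t\!\!\int(\dv\vp)^2-\int_0^t\!\!\int\calP(\dv\vp)\psi_z+\mathcal R ,
\]
with $\mathcal R$ collecting the cubic terms ($\int(\dv\vp)\abs{\na\psi}$, $\ep\int\abs{\na\psi}\Del\psi$) and the $\ep$--small terms ($\ep\int(\dv\vp)(\calP\psi_z)$, $\ep\int(\calP\psi_z)\Del\psi$).

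It then remains to bound the right--hand side by the quantities allowed in \eqref{claim1_lemma1_}. The term $\int_0^t\int(\dv\vp)^2$, and indeed every occurrence of $\int_0^t\|\na\vp\|_w^2$, $\|\vp(t)\|_w^2$, $\|\psi(t)\|^2$ and $\ep\int_0^t\|\na\psi\|^2$, is controlled by the zeroth--order estimate \eqref{eq_lemma0_}; the cubic and $\ep$--small pieces of $\mathcal R$ are absorbed into $\ep\int_0^t\int\abs{\na^2\psi}$ and into the left--hand side once $\ep_0,\delta_0$ are small, using $\|\na\psi\|_{L^\infty}\le C\sqrt{M(t)}$. The quadratic term is split by a weighted Young inequality: using $\ab{\calP}\le s$, one gets $-\int\calP(\dv\vp)\psi_z\le\eta\int N\psi_z^2+C_\eta\int\calP^2(\dv\vp)^2/N\le\eta\int N\abs{\na\psi}+C_\eta s^2\|\na\vp\|_w^2$, the first term being absorbed on the left for $\eta$ small and the second fed into \eqref{eq_lemma0_}.

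The crux is the time--boundary term $\int\vp(t)\cdot\na\psi(t)+\frac{1+\ep}{2}\|\na\psi(t)\|^2$: it is a first--order quantity at time $t$, not controlled by the data alone. Writing $\int\vp(t)\cdot\na\psi(t)=-\int(\dv\vp(t))\psi(t)$ reduces part of it to $\|\dv\vp(t)\|^2\lesssim\|\na\vp(t)\|_w^2$ and $\|\psi(t)\|^2$, but the remaining $\|\na\psi(t)\|^2$ and $\|\na\vp(t)\|_w^2$ must be handled by adding the above estimate, suitably scaled by a constant $\beta$, to Lemma \ref{lemma1_0}, whose left--hand side contains exactly $\|\na\psi(t)\|^2$ and $\|\na\vp(t)\|_w^2$. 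One needs $\beta$ large enough that the $\int_0^t\int N\abs{\na\psi}$ reappearing on the right through Lemma \ref{lemma1_0} is absorbed, yet small enough that the boundary quantities are absorbed; making these compatible forces the constant multiplying $\int_0^t\int N\abs{\na\psi}$ in Lemma \ref{lemma1_0} to be kept small, and is where the smallness of $\ep_0,\delta_0$ — and, through \eqref{eq_lemma0_} and Lemma \ref{lemma1_0}, of the product $s\cdot\la$ — enters. Collecting the surviving inequality and discarding the nonnegative leftover on the left yields \eqref{claim1_lemma1_}.
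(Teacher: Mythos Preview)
Your overall strategy---multiply the $\vp$--equation by $\na\psi$, then feed the $\psi$--equation back in to generate both $\int N\abs{\na\psi}$ and $\ep\int\abs{\na^2\psi}$---is exactly the paper's. Whether one substitutes the $\psi$--equation into $\int\na(\dv\vp)\cdot\na\psi$ (as the paper does) or multiplies the $\psi$--equation by $-\Del\psi$ and adds (as you do) is algebraically the same manipulation.

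The genuine gap is a sign error in your time--boundary term. When you add $(1+\ep)$ times the identity $\frac12\ddt\|\na\psi\|^2+\ep\int\abs{\Del\psi}=\cdots-\int(\dv\vp)\Del\psi$ to the first identity, the quantity $\frac{1+\ep}{2}\ddt\|\na\psi\|^2$ lands on the \emph{left}, so after moving it to the right and integrating in time one obtains
\[
\int_0^t\!\!\int N\abs{\na\psi}+(1+\ep)\ep\int_0^t\!\!\int\abs{\na^2\psi}
=\Big[\int\vp\cdot\na\psi-\tfrac{1+\ep}{2}\|\na\psi\|^2\Big]_0^t+\cdots,
\]
with a \emph{minus} in front of $\|\na\psi\|^2$, not a plus. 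This sign is the whole point (the paper highlights it in its remark: it is inherited from the consumption term $-cn$ in the original system). With the correct sign the boundary contribution at time $t$ is
\[
\int\vp(t)\cdot\na\psi(t)-\tfrac{1+\ep}{2}\|\na\psi(t)\|^2\le C\|\vp(t)\|^2+\tfrac12\|\na\psi(t)\|^2-\tfrac{1+\ep}{2}\|\na\psi(t)\|^2\le C\|\vp(t)\|^2,
\]
which is bounded by $C\|\vp(t)\|_w^2$ and hence directly by \eqref{eq_lemma0_}. Your entire last paragraph---the delicate balancing of a parameter $\beta$ against Lemma~\ref{lemma1_0}, and the claimed need for smallness of $s\cdot\la$ at this step---is therefore unnecessary and stems from this sign slip. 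Once corrected, the remaining bounds you list (for $\int(\dv\vp)^2$, the $\calP(\dv\vp)\psi_z$ term, the cubic and $\ep$--small remainders) are fine and match the paper's.
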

 
\begin{proof}

Multiplying $\na\ps$ to the $\vp$-equation, 
we have 
\begin{equation}\begin{split}\label{eq_lemma0__first_claim_intermediate}
 N |\na \ps|^2 &=  {\vp_t \cdot \na \ps}  - s\vp_z \cdot\na \ps - {\Del \vp \cdot\na \ps}  
-(\dv \vp)P\cdot\na\ps
- \dv \vp | \na \ps|^2\\
&=  
(\vp \cdot\na \ps)_t - \vp \cdot\na \ps_t 
 - s\vp_z \cdot\na \ps -\underbrace{\Del \vp \cdot\na \ps}_{(*)} 
-(\dv \vp)P\cdot\na\ps
- \dv \vp | \na \ps|^2.
\end{split}\end{equation}

For the second term $\vp \cdot\na \ps_t $, we use the $\psi$ equation (after taking $\na$):
$$  
\na \ps_t=s \na \ps_z + \na (\dv \vp)
  +\ep \Del \na\ps  -2\ep \na( P \cdot\na \ps) -\ep  \na(|\na \ps|^2) 
$$

 For $(*)$, we observe that
\[(*) 
 = (\dv \vp)_z \ps_z + ( \vp^1_{yy} - \vp^2_{zy}) \ps_z + ( \dv \vp)_y \ps_y 
+ (\vp^2_{zz} - \vp^1_{zy}) \ps_y.\]
Integrating by parts for $\int(*)$ gives
\[ \int(*)
 =  \int  (\dv \vp)_z \ps_z + ( \dv \vp)_y \ps_y
  =  \int  \na(\dv \vp) \cdot \na\ps.\]
 By using the $\ps$-equation which have $\na\cdot\vp$ on its right-hand side,
we get
\begin{align*} \int \na (\dv \vp)\cdot\na \ps &= \int \na (\ps_{t} - s \ps_{z}
 -\ep \Del \ps +2\ep  P \cdot\na \ps+\ep  |\na \ps|^2
)\cdot \na  \ps \\&= \frac 12 \ddt \int |\na\ps|^2
+\int \na( 
 -\ep \Del \ps +2\ep  P \cdot\na \ps+\ep  |\na \ps|^2
)\cdot\na \ps.
\end{align*}

Thus integration on \eqref{eq_lemma0__first_claim_intermediate} gives us
\begin{align*}
\int N |\na \ps|^2 =& 
\ddt \int \vp \cdot\na \ps - \int s \vp \cdot \na \ps_z -\int\vp\cdot (\na (\dv \vp)
+ \ep \Del \na\ps  -2\ep \na( P \cdot\na \ps) -\ep  \na(|\na \ps|^2) )\\ \nonumber
&- \int s \vp_z\cdot \na \ps \\ \nonumber
&- \frac 12 \ddt\int
|\na \psi|^2
+\int \na( 
 \ep \Del \ps -2\ep  P \cdot\na \ps-\ep  |\na \ps|^2
)\cdot\na\ps
\\ \nonumber
&- \int (\dv \vp)P\cdot\na\ps   - \int  \dv \vp | \na \ps|^2. 
  \end{align*}
 We rearrange the above to get
 \begin{align}\label{ineq_claim1_lemma1}
\int N |\na \ps|^2  
=& \ddt \int \vp \cdot\na \ps - \frac 12 \ddt \int  |\na\ps|^2 \\& \nonumber + \int | \dv \vp|^2 - \int (\dv \vp)P\cdot\na\ps - \int  \dv \vp | \na \ps|^2\\ \nonumber
&+\ep\int \na( 
  \Del \ps -2  P \cdot\na \ps-   |\na \ps|^2
)\cdot\na\ps - \ep\int\vp\cdot(   \Del \na\ps  -2 \na( P \cdot\na \ps) -  \na(|\na \ps|^2) )\\& \nonumber=(I)+(II)+(III).
 \end{align}
 For the first term $(I)$, after integrating in time, we get
 \begin{align*}
 \int_0^t(I)=\int_0^t\lr{\ddt \int \vp \cdot\na \ps - \frac 12 \ddt \int  |\na\ps|^2}\leq C(\|\vp(t)\|^2 + \|\na\ps_0\|^2+\|\vp_0\|^2)&
 \end{align*} by 
$ \int  \vp \na\ps \le C\| \vp \| ^2 + \frac 12   \| \na \ps\|^2$.

For the second term $(II)$, we estimate 
 \begin{align*}
  \int | \dv \vp|^2-\int (\dv \vp)P\cdot\na\ps 
 & \le  C\int \fn{ | \na \vp|^2} + \frac 14 \int  N |\na \ps|^2, \\
  \int  |\dv \vp || \na \ps|^2 & \le C\sqrt{M(t)}    \int {| \na \ps|^2} \le C\sqrt{M(t)}    \int \fn{| \na \ps|^2}
 \end{align*}
 by bounding $\| \dv\vp \|_{L^{\infty}}  \le C \| \dv\vp \|_{H^2}\le C \sqrt{M(t)}$. 

For the $\ep$-term $(III)$,  
 by bounding $\| \na\psi \|_{L^{\infty}}  \le C \| \na\psi \|_{H^2}\le C \sqrt{M(t)}\le C \sqrt{\delta_0}\leq C$, 
 we estimate \begin{align*}
 \ep \int \na(    \Del \ps -2   P \cdot\na \ps-   |\na \ps|^2)\cdot\na\ps &=
-\ep\int |\na^2\ps|^2
-\ep \int \na(     2   P \cdot\na \ps+  |\na \ps|^2)\cdot\na\ps\\
&\leq-\ep\|\na^2\ps\|^2 +C\ep  
\int(|\na\psi|^2+|\na^2\psi||\na\psi|+\underbrace{|\na\psi|^2}_{\leq C|\na\psi|}|\na^2\psi|)
\\
&\leq-\ep\|\na^2\ps\|^2 +C\ep  \|\na\ps\|^2+\frac{\ep}{4}\|\na^2\ps\|^2
+C\ep  \|\na\ps\|^2\\
&\leq-\frac{3}{4}\ep\|\na^2\ps\|^2 +C\ep  \|\na\ps\|^2 
\end{align*} and 
\begin{align*}
 - \ep\int\vp\cdot(   \Del \na\ps  -2 \na( P \cdot\na \ps) -  \na(|\na \ps|^2) ) 
&\leq  C\ep\int(|\na\vp||\na^2\ps|  + |\na\vp||\na \ps| +|\na\vp|\underbrace{|\na \ps|^2}_{\leq C|\na\psi|})\\
&\leq \frac\ep 4\|\na^2\ps\|^2 +C\ep  \|\na\vp\|^2+C\ep  \|\na\ps\|^2\\
&\leq \frac\ep 4\|\na^2\ps\|^2 +C\ep  \|\na\vp\|_w^2+C\ep  \|\na\ps\|^2.
\end{align*}

 \begin{remark}
The key idea is to observe that the chemical $c$ is consumed by the cells $n$ in the  system  \eqref{KS}. More precisely, the negative sign of the term $cn$ in the right-hand side of the $c$-equation in \eqref{eq:main}, which is related to 
 the positive sign of the term $\na n$ in the right-hand side of the $p$-equation in \eqref{nq},
 is passed down to the signs of terms $-\frac 12 \int_0^t \frac{d}{dt}\| \na \psi\|^2 $ in ($I$) and $-\ep \| \na^2\psi \|^2 $ in ($III$). 
\end{remark}

Integrating \eqref{ineq_claim1_lemma1} in time, we get
 \begin{align*} 
\int_0^t\int& N |\na \ps|^2  
\leq 
C(\|\vp(t)\|^2 + \|\na\ps_0\|^2+\|\vp_0\|^2)\\
& +\int_0^t\Big(C  \|\na\vp\|_w^2+ \frac 14 \int  N |\na \ps|^2  +C\sqrt{M(t)}    \int \fn{| \na \ps|^2} - \frac\ep 2\|\na^2\ps\|^2 +C\ep  \|\na\ps\|^2\Big).
 \end{align*}


By the estimate \eqref{eq_lemma0_}, we have 
\eqref{claim1_lemma1_}.  
\end{proof}

\begin{lemma}\label{lemma1_2}  If the positive constants $\ep_0,  \delta_0 $ are sufficiently small, then there exists a constant 
  $C>0$ such that 
 for any $t\in[0,T]$, 
 \begin{align} 
& \int \fn {|\na \ps|^2} +   \int _0^ t \int \fn {|\na \ps|^2 }
+ \ep \int_0^t\int    \fn{|\na^2\ps|^2}
\le  C(  \| \na \ps_0 \|_w^2 + \| \ps _0\|^2 + \| \vp_0\|_{w}^2) + C\int_0^t \int \fn{ |\na^2\vp|^2}. \label{claim2_lemma1_}
\end{align}
\end{lemma}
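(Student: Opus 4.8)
The plan is to run a weighted energy estimate for $\na\ps$: apply $\na$ to the $\ps$-equation in \eqref{main_eq} and test against $w\na\ps$ with $w=1/N$. The time term gives $\tfrac12\ddt\int w|\na\ps|^2$; the convection term gives $-s\int w\na\ps\cdot\na\ps_z=\tfrac s2\int w'|\na\ps|^2$, which has a favorable sign since $w'=(s+\PP)w>0$ by \eqref{NP_relation} and Theorem \ref{thm_NP}; and the diffusion term, after two integrations by parts, gives $\ep\int w|\na^2\ps|^2-\tfrac\ep2\int w''|\na\ps|^2$, where $|w''|\le Cw$ by Lemma \ref{lem_NP}. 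On the right, the source term $\int w\na\ps\cdot\na\dvg\vp$ is split by Young's inequality into $\eta\int w|\na\ps|^2+C_\eta\int w|\na^2\vp|^2$ (this is the origin of the term $\int_0^t\int\fn{|\na^2\vp|^2}$ in \eqref{claim2_lemma1_}); the term $-2\ep\int w\na\ps\cdot\na(P\cdot\na\ps)=-2\ep\int w\PP'\ps_z^2+\ep\int(w\PP)'|\na\ps|^2$ is bounded by $C\ep\int w|\na\ps|^2$, since $-2\ep w\PP'\ps_z^2\le0$ by Theorem \ref{thm_NP} and $|(w\PP)'|\le Cw$ by Lemma \ref{lem_NP}; and the cubic term $-\ep\int w\na\ps\cdot\na|\na\ps|^2$, integrated by parts and estimated using $\|\na\ps\|_{L^\infty}\le C\sqrt{M(t)}$ from the Sobolev embedding, is bounded by $\tfrac\ep8\int w|\na^2\ps|^2+C\ep\sqrt{M(t)}\int w|\na\ps|^2$. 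Collecting these and integrating in time gives
\[ \|\na\ps\|_w^2(t)+\tfrac s2\int_0^t\int w'|\na\ps|^2+\ep\int_0^t\int w|\na^2\ps|^2\le C\|\na\ps_0\|_w^2+C\int_0^t\int w|\na^2\vp|^2+\big(\eta+C\ep_0+C\ep_0\sqrt{\delta_0}\big)\int_0^t\int w|\na\ps|^2. \]

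The main point is then to upgrade the partial dissipation $\int w'|\na\ps|^2$ to the full weighted norm $\int w|\na\ps|^2$, and this is precisely where Lemma \ref{lem_center} and \eqref{rem_center} enter. On $\{z\ge z_0\}$ one has $w'\ge\tfrac s2w$, so $\tfrac s2\int w'|\na\ps|^2\ge\tfrac{s^2}{4}\int_{z\ge z_0}w|\na\ps|^2$; on $\{z\le z_0\}$ one has $w\le\tfrac{16}{s^4}N$, so $\int_{z\le z_0}w|\na\ps|^2\le\tfrac{16}{s^4}\int N|\na\ps|^2$. Hence $\int w|\na\ps|^2\le\int_{z\ge z_0}w|\na\ps|^2+\tfrac{16}{s^4}\int N|\na\ps|^2$, and choosing first $\eta$ and then $\ep_0,\delta_0$ small enough that $\eta+C\ep_0+C\ep_0\sqrt{\delta_0}\le\tfrac{s^2}{8}$, the term $(\eta+\dots)\int_0^t\int w|\na\ps|^2$ can be absorbed: its $\{z\ge z_0\}$ part into $\tfrac{s^2}{4}\int_0^t\int_{z\ge z_0}w|\na\ps|^2$, and its $\{z\le z_0\}$ part into a multiple of $\int_0^t\int N|\na\ps|^2$. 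After also adding back $\int_0^t\int_{z\le z_0}w|\na\ps|^2\le\tfrac{16}{s^4}\int_0^t\int N|\na\ps|^2$ to rebuild the full weighted integral on the left, this leaves
\[ \|\na\ps\|_w^2(t)+\int_0^t\int w|\na\ps|^2+\ep\int_0^t\int w|\na^2\ps|^2\le C\|\na\ps_0\|_w^2+C\int_0^t\int N|\na\ps|^2+C\int_0^t\int w|\na^2\vp|^2. \]

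To close, I would invoke Lemma \ref{lemma1_1} to bound $\int_0^t\int N|\na\ps|^2$ by $C(\|\na\ps_0\|^2+\|\ps_0\|^2+\|\vp_0\|_w^2)+C\sqrt{M(t)}\int_0^t\int\fn{|\na\ps|^2}$, use $\|\na\ps_0\|^2\le C\|\na\ps_0\|_w^2$ from \eqref{norm_comp}, and note that $\int_0^t\int\fn{|\na\ps|^2}=\int_0^t\int w|\na\ps|^2$ already sits on the left; since $\sqrt{M(t)}\le\sqrt{\delta_0}$, a final choice of small $\delta_0$ absorbs this term and yields \eqref{claim2_lemma1_}. The main obstacle — and the only genuinely new feature compared with the one-dimensional arguments — is that the convection dissipation $w'$ is comparable to the weight $w$ only in the far tail $\{z\ge z_0\}$, so the weighted estimate there must be patched with the $N$-weighted estimate of Lemma \ref{lemma1_1} near $\{z\le z_0\}$; this is exactly why Lemma \ref{lemma1_1} is established first and why the overlap point $z_0$ of Lemma \ref{lem_center} is needed. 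The rest is bookkeeping: every term not paid for by the right-hand side of \eqref{claim2_lemma1_} carries a factor $\ep$, $\eta$, or $\sqrt{M(t)}$ and is therefore absorbable once the parameters $\ep_0,\delta_0$ are small.
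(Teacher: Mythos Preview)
Your proof is correct and follows essentially the same strategy as the paper: test $\na$ of the $\psi$-equation against $w\na\psi$, use \eqref{rem_center} to get genuine $w$-dissipation only on $\{z\ge z_0\}$, and patch the region $\{z\le z_0\}$ via $w\le CN$ together with Lemma \ref{lemma1_1}. The only organizational difference is that the paper integrates on the two half-strips $\{z>z_0\}$ and $\{z<z_0\}$ separately (with boundary traces at $z_0$ that cancel upon addition), whereas you integrate globally first and then split the resulting integrals; these are equivalent, and your slightly more explicit handling of the $\ep$-terms (writing $-2\ep\int w\PP'\psi_z^2\le 0$ and $|(w\PP)'|\le Cw$ directly) matches the paper's bounds.
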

 
\begin{proof}
First we take $\na$ to the $\ps$-equation then multiply by  $ w \na \ps$ to get
\begin{align*} 
&\frac 12 (w |\na \ps|^2)_t - \frac s2 ( w |\na \ps|^2)_z + \frac s2 w' |\na \ps|^2 \\
&= w \na (\dv \vp) \cdot \na \ps 
+\underbrace{\ep w \na \ps\cdot\lr{\Del \na\ps -2  \na( P \cdot\na \ps) -   \na(|\na \ps|^2)}}_{\ep\mbox{-terms}}.
\end{align*}
 
We  assume that $\ep_0>0$ is  smaller than $\ep_1>0$ in Lemma \ref{lem_center}.
 Then,  by  \eqref{rem_center},  there exists a point $z_0\in\bbR$  such that
  \ben\begin{split}
&\frac{w'(z)}{w(z)}\geq \frac{s}{2}\quad \mbox{for } z\geq z_0\quad \mbox{and}\quad w(z)\leq \frac{4}{s^2}\leq \frac{16}{s^4} N \quad \mbox{for } z\leq z_0.
 \end{split}\een

Integrating on each half strip (notation : $\int_{z>z_0}f:=\int_{z_0}^\infty\int_0^\la f (z,y,t)dy dz$)  and in time, we get 
\begin{align*}
\frac 1 2\int _{z>{z_0}} w |\na \ps|^2 &
 \le \frac 1 2 \int_{z>{z_0}} w |\na \ps_0|^2 + \int_0^t \int_{z>{z_0}} w \na (\dv \vp) \cdot \na \ps -
\frac{s}{2}\int_0^t \int _{z>{z_0}} \underbrace{w'}_{\geq \frac s 2 w} |\na \ps|^2 \\&\quad -\frac s2\int_0^t  \int_0^\la w |\na \ps|^2({z_0}, y) dy+\int_0^t \int_{z>{z_0}}{\ep\mbox{-terms}}\\
&
 \le \frac 1 2 \int_{z>{z_0}} w |\na \ps_0|^2 + \int_0^t \int_{z>{z_0}} w |\na (\dv \vp)|  |\na \ps| -
\frac{s^2}{4}\int_0^t \int _{z>{z_0}} w |\na \ps|^2 \\&\quad -\frac s2\int_0^t  \int_0^\la w |\na \ps|^2({z_0}, y) dy+\int_0^t \int_{z>{z_0}}{\ep\mbox{-terms}}\\
& \le  \frac 1 2 \int _{z>{z_0}} w |\na \ps_0|^2 -\frac{s^2}{8} \int_0^t \int_{z>{z_0}} w |\na \ps|^2 +
  C \int_0^t \int_{z>{z_0}} w |\na(\na \cdot \vp)|^2 \\&\quad -\frac s2\int_0^t  \int_0^\la w |\na \ps|^2({z_0}, y) dy
  +\int_0^t \int_{z>{z_0}}{\ep\mbox{-terms}}
  \end{align*} and 
  \begin{align*}
\frac 1 2 \int _{z<{z_0}} w |\na \ps|^2 \le & \frac 1 2\int_{z<{z_0}} w |\na \ps_0|^2 + \int_0^t \int_{z<{z_0}} \underbrace{w}_{\leq C}| \na (\dv \vp)| | \na \ps|
  \underbrace{-\frac{s}{2}\int_0^t \int _{z<{z_0}} w' |\na \ps|^2}_{\leq 0}
\\& +\frac s2\int_0^t  \int_0^\la w |\na \ps|^2({z_0}, y) dy +\int_0^t \int_{z<{z_0}}{\ep\mbox{-terms}}\\
\le& \frac 1 2\int_{z<{z_0}} w |\na \ps_0|^2 + C\int_0^t \int_{z<{z_0}} |\na (\dv \vp)|| \na \ps |
 \\&  +\frac s2\int_0^t  \int_0^\la w |\na \ps|^2({z_0}, y) dy +\int_0^t \int_{z<{z_0}}{\ep\mbox{-terms}}\\
  \le&  \frac 1 2\int_{z<{z_0}} w |\na \ps_0|^2 + C\int_0^t \int_{z<{z_0}} N |\na \ps|^2  + C\int_0^t \int_{z<{z_0}} \fn{ |\na^2 \vp|^2}
 \\&  +\frac s2\int_0^t  \int_0^\la w |\na \ps|^2({z_0}, y) dy +\int_0^t \int_{z<{z_0}}{\ep\mbox{-terms}} .
\end{align*}
Adding   the above two estimates, we get
\begin{align*}
&\frac 1 2 \int w |\na \ps|^2 + \frac{s^2}{8} \int_0^t \int_{z>z_0} w |\na \ps|^2\\
& \le \frac 1 2 \int w |\na \ps_0|^2 +  C\int_0^t \int N|\na\ps|^2 + 
C\int_0^t \int w |\na^2 \vp|^2  +\int_0^t \int {\ep\mbox{-terms}}.
\end{align*}
Adding $ \frac{s^2}{8} \int_0^t \int_{z<z_0} w |\na \ps|^2$ to the both sides  and noting $w\leq C N$ on $\{z<z_0\}$, we get
\begin{align*}
&\frac 1 2 \int w |\na \ps|^2 + \frac{s^2}{8} \int_0^t \int w |\na \ps|^2\\
& \le \frac 1 2 \int w |\na \ps_0|^2 +  C\int_0^t \int N|\na\ps|^2 + 
C\int_0^t \int w |\na^2 \vp|^2  +\int_0^t \int {\ep\mbox{-terms}}\\
&\le C ( \| \na \ps_0\|_w^2 + \| \ps_0\|^2 + \| \vp_0\|_w^2) + C\underbrace{\sqrt{{M(t)}}}_{\leq\delta_0} \int_0^t \int \fn{ |\na \ps|^2}
+ C \int_0^t \int \fn{ |\na^2 \vp|^2}+\int_0^t \int {\ep\mbox{-terms}}  
\end{align*}
where  we used the previous estimate \eqref{claim1_lemma1_} for the last inequality.   

For the $\ep ${-terms},  we estimate
\begin{align*}
&\int {\ep \mbox{-terms}}=\ep\int { w \na \ps\cdot\lr{\Del \na\ps -2  \na( P \cdot\na \ps) -   \na(|\na \ps|^2)}}\\
&=\ep\int \Big({ -w|\na^2\ps|^2 -w' \na \ps\cdot  \na\ps_z
- w \na \ps\cdot\lr{  2  \na( P \cdot\na \ps)} 
+  
(w'\psi_z+w\Del\ps)
|\na \ps|^2}\Big)\\
&\leq-\ep\int {  w|\na^2\ps|^2 +C\ep\int \Big( w |\na \ps|| \na^2\ps| + w| \na \ps|( |\na^2\ps|+|\na\ps| )
+w |\na \ps|    |\na \ps|^2}+w|\na^2\ps||\na\ps|^2\Big)\\
&\leq-\frac \ep 4\int   w|\na^2\ps|^2 +C\ep\int w |\na \ps|^2 
 +C\ep\sqrt{M(t)}\int w |\na \ps|^2
\leq-\frac \ep 4\int   w|\na^2\ps|^2 +C{\ep}\int w |\na \ps|^2 
\end{align*} where we used the estimate 
$|\frac{w'}{w}|=|s+\PP|\leq s$.

In sum, we have 
\begin{align*}
&\frac{1}{2}\int w |\na \ps|^2 +  (\frac{s^2}{8}-C(\ep_0+\sqrt{\delta_0})) \int_0^t \int w |\na \ps|^2 +\frac{ \ep}{4} \int_0^t\int   w|\na^2\ps|^2 \\
&\le C ( \| \na \ps_0\|_w^2 + \| \ps_0\|^2 + \| \vp_0\|_w^2) 
+ C \int_0^t \int \fn{ |\na^2 \vp|^2}.  
\end{align*} Then, by making $\ep_0>0$ and $\delta_0>0$ small enough, it proves the estimate \eqref{claim2_lemma1_}.

\end{proof}
Up to now, we have proved the following first order  energy estimate, which is closed except  that we assumed that higher order norms are small by $M(T)\leq\delta_0$:
\begin{lemma}\label{lemma01_} If the positive constants $\ep_0,  \delta_0 $ are sufficiently small, then there exists a constant 
  $C>0$ such that 
 for any $t\in[0,T]$,
  \begin{equation}\begin{split}\label{eq_lemma01_}
\| \vp\|_{1,w}^2 + \| \ps\|^2    + \| \na \ps\|_w^2+  \int_0^t  \sum_{l = 1,2} \| \na^{l} \vp\|_w^2
+\int_0^t   \| \na \ps\|_w^2
+\ep\int_0^t   \| \na^{2} \ps\|_w^2
\\
\quad  \le
 C ( \|  \vp_0\|_{1,w}^2  +
\| \ps_0\|^2 + \| \na \ps_0\|_w^2 ). 
\end{split}\end{equation}
\end{lemma}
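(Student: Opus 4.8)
The plan is to assemble the zero-th order estimate \eqref{eq_lemma0_} with the three first order estimates \eqref{ineq:lem1_}, \eqref{claim1_lemma1_} and \eqref{claim2_lemma1_} into one coupled inequality, and then to close that inequality using the smallness of $M(t)\le M(T)\le\delta_0$. Write $D_0:=\|\vp_0\|_{1,w}^2+\|\ps_0\|^2+\|\na\ps_0\|_w^2$ for the data on the right of \eqref{eq_lemma01_}; by \eqref{norm_comp} each data quantity occurring on the right of the four cited estimates (for instance $\|\na\ps_0\|^2$, $\|\ps_0\|^2$, $\|\vp_0\|_w^2$, $\|\na\vp_0\|_w^2$, $\|\na\ps_0\|_w^2$) is bounded by $CD_0$. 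Here $C$ denotes, as before, a constant independent of $\ep\in(0,\ep_0)$ and of $T$, and since $M(t)\le\delta_0\le1$ we may freely replace any factor $M(t)$ by $\sqrt{M(t)}$.

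First I would use \eqref{claim1_lemma1_} to eliminate the term $C\int_0^t\int N|\na\ps|^2$ from the right of \eqref{ineq:lem1_}. Since the term $CM(t)\int_0^t\int\fn{|\na\ps|^2}$ already present in \eqref{ineq:lem1_} is of the same type, this turns \eqref{ineq:lem1_} into
\begin{equation*}
\|\na\vp\|_w^2+\|\na\ps\|^2+\int_0^t\|\na^2\vp\|_w^2+\ep\int_0^t\|\na^2\ps\|^2\le CD_0+C\sqrt{M(t)}\int_0^t\|\na\ps\|_w^2,
\end{equation*}
and in particular a bound for $\int_0^t\|\na^2\vp\|_w^2$ of the same shape. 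I would then feed this into \eqref{claim2_lemma1_}, whose right-hand side contains exactly $C\int_0^t\int\fn{|\na^2\vp|^2}=C\int_0^t\|\na^2\vp\|_w^2$, to get
\begin{equation*}
\|\na\ps\|_w^2+\int_0^t\|\na\ps\|_w^2+\ep\int_0^t\|\na^2\ps\|_w^2\le CD_0+C\sqrt{M(t)}\int_0^t\|\na\ps\|_w^2.
\end{equation*}

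Now I would shrink $\delta_0$ so that $C\sqrt{\delta_0}\le\frac12$; because $M(t)\le\delta_0$, the last term above is absorbed into the left-hand side, giving $\|\na\ps\|_w^2+\int_0^t\|\na\ps\|_w^2+\ep\int_0^t\|\na^2\ps\|_w^2\le CD_0$. To finish I would propagate this back through the chain: reinserting $\int_0^t\|\na\ps\|_w^2\le CD_0$ into the first display controls $\|\na\vp\|_w^2$ and $\int_0^t\|\na^2\vp\|_w^2$ by $CD_0$, and feeding the same bound into \eqref{eq_lemma0_} — whose remaining right-hand term is then the already-controlled quantity $CM(t)\int_0^t\|\na\ps\|_w^2$ — gives $\|\ps\|^2+\|\vp\|_w^2+\int_0^t\|\na\vp\|_w^2\le CD_0$. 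Summing these bounds, and recalling $\|\vp\|_{1,w}^2=\|\vp\|_w^2+\|\na\vp\|_w^2$, yields \eqref{eq_lemma01_}.

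The one genuinely delicate point — the substantive work having already been carried out in Lemmas \ref{lemma1_0}--\ref{lemma1_2} and in \eqref{eq_lemma0_} — is the cyclic dependence among $\int_0^t\|\na^2\vp\|_w^2$, $\int_0^t\int N|\na\ps|^2$ and $\int_0^t\|\na\ps\|_w^2$; it closes only because, after the ordered substitutions \eqref{claim1_lemma1_}$\to$\eqref{ineq:lem1_}$\to$\eqref{claim2_lemma1_}, the coefficient in front of $\int_0^t\|\na\ps\|_w^2$ on the right has become $C\sqrt{M(t)}\le C\sqrt{\delta_0}$, which the smallness of $\delta_0$ renders $\le\frac12$. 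The smallness of $\ep_0$ plays no further role at this stage, having already been consumed inside the quoted lemmas.
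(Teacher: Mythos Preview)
Your proof is correct and follows essentially the same approach as the paper: both combine \eqref{eq_lemma0_}, \eqref{ineq:lem1_}, \eqref{claim1_lemma1_}, and \eqref{claim2_lemma1_} via successive substitution and then close the resulting cyclic inequality using $M(t)\le\delta_0$. The only cosmetic difference is the pivot variable for the absorption step---the paper substitutes \eqref{claim2_lemma1_} into \eqref{ineq:lem1_} and absorbs the term $C\sqrt{M(t)}\int_0^t\|\na^2\vp\|_w^2$ into the left side, whereas you substitute \eqref{ineq:lem1_} into \eqref{claim2_lemma1_} and absorb $C\sqrt{M(t)}\int_0^t\|\na\ps\|_w^2$; the two orderings are equivalent.
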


\begin{proof}
Plugging the estimates \eqref{claim1_lemma1_} and \eqref{claim2_lemma1_} into \eqref{ineq:lem1_}, 
we  have
\begin{align*}
 &\| \na\vp\|_w^2 + \| \na\ps\|^2 + \int_0^t \| \na^2 \vp\|_w^2  + \ep \int_0^t\|\na^2\ps\|^2 \\
 &\le  C ( \| \na \ps_0\|_w^2 + \| \ps_0\|^2 + \| \vp_0\|_{1,w}^2) + C\sqrt{{M(t)}}   \int_0^t \int \fn{|\na^2 \vp|^2}
\end{align*} which gives us 
\begin{align}\label{ineq:lem1_closed}
 &\| \na\vp\|_w^2 + \| \na\ps\|^2 + \int_0^t \| \na^2 \vp\|_w^2  + \ep \int_0^t\|\na^2\ps\|^2
\le  C ( \| \na \ps_0\|_w^2 + \| \ps_0\|^2 + \| \vp_0\|_{1,w}^2) 
\end{align}
if we assume $\delta_0>0$ small enough.
In addition, from the estimate \eqref{claim2_lemma1_} together with the above estimate \eqref{ineq:lem1_closed}, we get
\begin{align}\label{claims_lemma1_}
\int \fn{ |\na \ps|^2} + \int_0^t \int \fn{ |\na \ps|^2} + 
 \ep\int_0^t \int \fn{ |\na^2 \ps|^2} \le C ( \|  \vp_0\|_{1,w}^2   + \| \na \ps_0\|_w^2
+ \| \ps_0\|^2).
\end{align}
 By adding the estimate \eqref{eq_lemma0_} to the above estimates \eqref{ineq:lem1_closed} and \eqref{claims_lemma1_} and by
assuming $\delta_0$ small enough,  
we have \eqref{eq_lemma01_}.
 
\end{proof}

\subsection{Higher order estimate}\label{sec_higher} \ \\

 To finish the proof of Proposition \ref{uniform_}, we need to do   similar  energy estimates up to the third  order derivatives. 
 We collect all the higher order estimates  into  the following SINGLE lemma, which can be proved in a similar way as we did for the first order estimate in
   Lemma \ref{lemma1_0}, \ref{lemma1_1}, \ref{lemma1_2} and \ref{lemma01_} in
the last subsection. Here we present its proof in detail for readers' convenience.

\begin{lemma}\label{lemma23_} If the positive constants $\ep_0,  \delta_0 $ are sufficiently small, then there exists a constant 
  $C>0$ such that 
 for any $t\in[0,T]$ and  
for $k=2, 3$, we have 
\begin{align*}
&\| \na^k \vp\|^2_w + \| \na^k \ps\|^2_w +   \int_0^t  { \|\na^{k+1}\vp\|_w^2}
+ \int_0^t  { \|\na^{k}\ps\|_w^2}
+  \ep\int_0^t  { \|\na^{k+1}\ps\|_w^2}\\
&\le  C (  \| \vp_0\|_{k,w}^2+
 \| \na \ps_0\|_{k-1,w}^2+ \| \ps_0\|^2).
\end{align*}
\end{lemma}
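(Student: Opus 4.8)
The plan is to run the same three-substep scheme used for the first-order estimate (Lemma \ref{lemma1_0}, \ref{lemma1_1}, \ref{lemma1_2}, assembled in Lemma \ref{lemma01_}), but applied to the $k$-th order derivatives $\na^k\vp$, $\na^k\ps$ for $k=2,3$, and to proceed by induction on $k$ so that at stage $k$ all lower-order quantities are already controlled by the data via Lemma \ref{lemma01_} (for $k=2$) and the stage-$(k-1)$ conclusion (for $k=3$). First I would differentiate the system \eqref{main_eq} $k$ times in the horizontal/transversal variables, multiply the $\vp$-equation by $w\,\na^k\vp=\na^k\vp/N$ and the $\psi$-equation by $\na^k\ps$, integrate over $\Om$, and integrate by parts. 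This produces, as in \eqref{ineq:lem1_}, a good term $\int\!w|\na^{k+1}\vp|^2+\ep\int|\na^{k+1}\ps|^2$ on the left, while on the right the weight-derivative terms $\int|\na^k\vp|^2(1/N)''$ and $-\tfrac s2\int|\na^k\vp|^2(1/N)'$ are absorbed exactly as in Lemma \ref{lemma1_0} using $|(1/N)'|+|(1/N)''|\le L/N$ (Lemma \ref{lem_NP}) and Young's inequality, at the cost of $C\|\na^k\vp\|_w^2$ plus $\tfrac18\|\na^{k+1}\vp\|_w^2$; the quadratic terms coming from $N'$, $P'$, $P$ are bounded by $C\|\sqrt N\,\na^k\ps\|^2+C\|\na^{\le k}\vp\|_w^2$; the cubic terms, after distributing derivatives, are controlled by $C\sqrt{M(t)}\big(\|\na^{k+1}\vp\|_w^2+\|\na^{\le k}\vp\|_w^2+\|\na^{\le k}\ps\|_w^2\big)$ using $\|\na\ps\|_{L^\infty},\|\dv\vp\|_{L^\infty}\le C\sqrt{M(t)}\le C\sqrt{\delta_0}$; and the $\ep$-terms $-2\ep\int(P\cdot\na\ps)^{(k)}\na^k\ps$ and $-\ep\int(|\na\ps|^2)^{(k)}\na^k\ps$ are handled by $C\ep\|\na^{\le k}\ps\|^2+\tfrac{\ep}{4}\|\na^{k+1}\ps\|^2$ just as before. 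After integrating in time one arrives at the $k$-th order analogue of \eqref{ineq:lem1_}, with $\int_0^t\int N|\na^k\ps|^2$ and a $\sqrt{M(t)}\int_0^t\!\int w|\na^k\ps|^2$ term still undigested on the right.

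Second, I would reproduce Lemma \ref{lemma1_1} at order $k$: multiply the $k$-times differentiated $\vp$-equation by $\na^k\ps$ to extract $\int N|\na^k\ps|^2$, write $\na^k\vp\cdot\na^k\ps_t=\partial_t(\na^k\vp\cdot\na^k\ps)-\na^k\vp\cdot\na^k\ps_t$, substitute the ($k$-times $z$-or-$y$-differentiated) $\psi$-equation for $\na^k\ps_t$, and integrate by parts in the Laplacian term so that $\int\na^k(\Del\vp)\cdot\na^k\ps$ becomes $\int\na^k(\na\cdot\vp)\cdot\na^k\na\ps$, which upon using the $\psi$-equation again yields $-\tfrac12\ddt\|\na^{k}\na\ps\|^2$ — the same crucial sign-favourable cancellation flagged in the Remark after Lemma \ref{lemma1_1} (consumption of $c$ by $n$). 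The outcome, after time integration and using the already-established control of $\|\na^{\le k}\vp\|_w$, $\|\na^{\le k-1}\na\ps\|_w$ from the previous stage, is the bound $\int_0^t\!\int N|\na^k\ps|^2+\ep\int_0^t\!\int|\na^{k+1}\ps|^2\le C(\text{data})+C\sqrt{M(t)}\int_0^t\!\int w|\na^k\ps|^2$, mirroring \eqref{claim1_lemma1_}. Third, I would reproduce Lemma \ref{lemma1_2}: apply $\na^k$ to the $\psi$-equation, multiply by $w\na^k\ps$, split the integral at the point $z_0$ from Lemma \ref{lem_center}, use $w'/w\ge s/2$ for $z\ge z_0$ to get the localized dissipation $\tfrac{s^2}{8}\int_0^t\!\int_{z>z_0}w|\na^k\ps|^2$, use $w\le CN$ for $z\le z_0$ to trade the left half-strip against $\int_0^t\!\int N|\na^k\ps|^2$ from the previous step, cancel the boundary terms at $z=z_0$ between the two half-strips, and bound the $\ep$-terms by $-\tfrac\ep4\int w|\na^{k+1}\ps|^2+C\ep\int w|\na^k\ps|^2$ using $|w'/w|=|s+\PP|\le s$. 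This gives the $k$-th order analogue of \eqref{claim2_lemma1_}, namely control of $\|\na^k\ps\|_w^2$, $\int_0^t\!\int w|\na^k\ps|^2$ and $\ep\int_0^t\!\int w|\na^{k+1}\ps|^2$ in terms of data plus $C\int_0^t\!\int w|\na^{k+1}\vp|^2$ — provided $\ep_0,\delta_0$ are small enough to absorb the $C(\ep_0+\sqrt{\delta_0})\int_0^t\!\int w|\na^k\ps|^2$ error into the good localized term.

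Finally I would assemble these three estimates exactly as in the proof of Lemma \ref{lemma01_}: plug the order-$k$ versions of \eqref{claim1_lemma1_} and \eqref{claim2_lemma1_} into the order-$k$ version of \eqref{ineq:lem1_}, observe that every dangerous right-hand term is either lower order (hence bounded by the data by the inductive hypothesis / Lemma \ref{lemma01_}) or carries a factor $\sqrt{M(t)}\le\sqrt{\delta_0}$ multiplying a quantity already on the left, so it can be absorbed after shrinking $\delta_0$; this closes the order-$k$ estimate. Running $k=2$ then $k=3$ completes Lemma \ref{lemma23_}, and combining it with Lemma \ref{lemma01_} proves Proposition \ref{uniform_}. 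The main obstacle, as in the first-order case, is not the algebra of distributing $k$ derivatives but keeping the bookkeeping clean: one must make sure (i) the sign-favourable cancellation producing $-\tfrac12\ddt\|\na^k\na\ps\|^2$ survives at every order and at both $z$- and $y$-derivatives, (ii) the non-symmetric structure in $(\vp^1,\vp^2)$ (cf. the term in \eqref{quad_control_0-}) does not generate an unabsorbable term at higher order — here the Poincaré inequality \eqref{ineq:poincare2} and the smallness \eqref{smallness_lemma0_-_la} of $s\la$ are used again in the transversal direction — and (iii) the $\ep$-dependent terms are always paired correctly so that the factor $\ep$ (together with $\ep_0$ small) lets them be swallowed by either $\ep\|\na^{k+1}\ps\|_w^2$ or the $O(1)$ dissipations; it is precisely the interplay of these three points that forces the smallness of both $\ep_0$ and $s\la$, inherited unchanged from the zeroth- and first-order arguments.
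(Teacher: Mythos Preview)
Your three-step inductive scheme is exactly the paper's approach and is correct. Two clarifications on your obstacle list. Point (ii) is a non-issue: the Poincar\'e inequality and the smallness of $s\la$ are used \emph{only} at zeroth order (Lemmas~\ref{lemma0_-}--\ref{lemma0_temp}) to handle the specific term $\int(\PP/N)\vp^1(\vp^2)_y$; at order $k\ge1$ the $P$-quadratic terms $\int(\PP/N)\,\dv(\na^k\vp)\cdot\na^k\vp$ are absorbed directly by Young's inequality against $\tfrac18\|\na^{k+1}\vp\|_w^2$ and $C\int_0^t\|\na^k\vp\|_w^2$, the latter being already controlled by the previous stage. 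Conversely, a point you undersell is that at $k=3$ the cubic products contain middle-derivative factors such as $\int w\,|\na^2\vp|\,|\na^2\ps|\,|\na^4\vp|$ for which neither second-order factor lies in $L^\infty$; these are handled via the 2D embedding $H^1\hookrightarrow L^4$ together with $|(1/\sqrt N)'|\le C/\sqrt N$ from Lemma~\ref{lem_NP}. Finally, in your second substep the paper works with the scalar pair $(\dv\vp,\Del\ps)$ (and $(D\dv\vp,D\Del\ps)$ for $k=3$) rather than full tensors $\na^k$, which makes the sign-favourable cancellation $(*)=\tfrac12\ddt\|\Del\ps\|^2+\cdots$ clean; one then converts $\int N|\Del\ps|^2$ to $\int N|\na^2\ps|^2$ by the elementary commutator estimate \eqref{la_1}.
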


\begin{proof}[Proof of Lemma \ref{lemma23_}]
Differentiating the $\vp,\ps$ equations $i+j$ times  in $y$ or $z$, we have 
\begin{align*}
&\ipa \jpa \vp_t - s \ipa \jpa \vp_z - \Del \ipa \jpa \vp \\
&\phantom{\ipa \jpa \ps_t - s \ipa\jpa \ps_z} =
(    \ipa\jpa (N \na \ps) - N \ipa\jpa \na\ps  )  
  + N \na  \ipa\jpa \ps  + \ipa\jpa( P \dv \vp) + \ipa \jpa( \dv \vp \na \ps)\\
&\ipa \jpa \ps_t - s \ipa\jpa \ps_z -\ep \Del \ipa\jpa\ps  = -2\ep  \ipa\jpa(P \cdot\na \ps) -\ep  \ipa\jpa(|\na \ps|^2)+  \dv \ipa\jpa \vp.
\end{align*}
Thus we get
\begin{align} \label{higher_}
& \frac 12 \ddt \int  \lr{ \fn{ | \ipa \jpa \vp|^2} + | \ipa \jpa \ps|^2 } 
+  \int \fn { | \na \ipa\jpa \vp |^2}
+  \ep\int  { | \na \ipa\jpa \ps|^2}\\
 &=
  {  \frac 12 \int \abs{\ipa \jpa \vp} \left( \fn{1} \right)''- \frac s2 \int \abs{\ipa \jpa \vp} \Fn'}
 \nonumber\\
 &\underbrace{+\int  (    \ipa\jpa (N \na \ps) - N \ipa\jpa \na\ps  )\cdot  \fn { \ipa \jpa \vp} + \ipa \jpa (P \dv \vp) \cdot\fn{ \ipa\jpa \vp} }_{\mbox{ Quadratic term }}\nonumber\\
& \underbrace{+ \ipa\jpa ( \dv \vp \na \ps) \fn{ \ipa \jpa \vp}}_{\mbox{ Cubic term }} \nonumber\\
& \underbrace{ -
2\ep \int \ipa\jpa(\calP \ps_z) \ipa\jpa\psi
- \ep \int \ipa\jpa(|\na \ps|^2) \ipa\jpa\ps}_{\ep-\mbox{term}}. \nonumber
\end{align}

$\bullet$ Case $ k= i+j =2$\\

First, we estimate
\begin{align*}
&\frac 12 \int |\na^2\vp|^2 \lr{ \fn{1}}''-\frac s 2 \int |\na^2\varphi|^2 \left( \fn{1}\right)'
\le -\int \na^2\vp\cdot\na^2\vp_z  \left( \fn{1}\right)'
\\
&
\le  C\int |\na^2\vp||\na^2\vp_z |  \left( \fn{1}\right) 
\le  \frac{1}{4}\int  |\na^3\vp |^2\left( \fn{1}\right)+C \int |\na^2\varphi|^2  \left( \fn{1}\right).
 \end{align*}


What it follows, we do not distinguish $\pa_y$ and $ \pa_z$ derivatives.\\
The quadratic terms are symbolically
\begin{align*}
 \fn{N''} \na \ps \na^2 \vp, \quad  \fn{N'} \na^2 \ps \na^2 \vp,\quad \\
 \fn{P''} \na \vp \na^2 \vp, \quad   \fn{P'} \na^2 \vp \na^2 \vp  \quad \mbox{ and } \quad \fn{P} \na^3 \vp \na^2 \vp.\\
\end{align*}
We recall $P=(\PP,0)$, Lemma \ref{lem_NP}, and \eqref{NP_relation}: 
\[ |{N^{ (k)}}| < C, \quad   |{P^{(k)}}| <C,\quad \mbox{ for } 0\leq k\le 2,\quad 
 \Big|\fn{N'}\Big|=|\PP+s|\leq C, \mbox{ and }\Big|(\fn{N'})'\Big|=|\PP'|\leq C.\]
So the quadratic   terms are estimated by 
\begin{align*}
& C\int \Big[\Big|\fn{N''}\Big|| \na \ps|| \na^2 \vp| +
 \Big|\fn{P''} \Big||\na \vp ||\na^2 \vp|+ \Big|\fn{P'} \Big||\na^2 \vp ||\na^2 \vp|\Big]\\
 &\leq  C \|  \frac{\na^2\vp }{ \sqrt{N}} \|\lr{\| \frac{\na\ps }{\sqrt{N}}\| 
 +\| \frac{\na\vp }{\sqrt{N}}\|+\| \frac{\na^2\vp }{\sqrt{N}}\|}\\
 &\leq  C \lr{\| \frac{\na\ps }{\sqrt{N}}\|^2
 +\| \frac{\na\vp }{\sqrt{N}}\|^2+\| \frac{\na^2\vp }{\sqrt{N}}\|^2},
\end{align*} 
\begin{align*}
C\int \Big|\frac{P}{N}\Big|  |\na^3 \vp| |\na^2\vp|   \le
 C \|  \frac{\na^3\vp }{ \sqrt{N}} \|\| \frac{\na^2\vp }{\sqrt{N}}\|  \le
 \frac{1}{8} \|  \frac{\na^3\vp }{ \sqrt{N}} \|^2 +C\| \frac{\na^2\vp }{\sqrt{N}}\|^2
\end{align*} and 
\begin{align*}
& C\Big|\int \fn{N'} \na^2 \ps \na^2 \vp\Big|\leq C \int  | \na \ps| |\na^2 \vp|
+C\int   |\na \ps ||\na^3 \vp|\\
 &\leq  C \|  {\na\ps } \|\lr{\|  {\na^2\vp } \|+\|  {\na^3\vp } \|}\leq 
 C \|  {\na\ps } \|^2+ C\|  {\na^2\vp } \|^2+\frac 1 8 \|  {\na^3\vp } \|^2
\end{align*} where we used integration by parts for the last estimate.

The cubic terms 
 are symbolically written as 
$ \na^2 (\na \vp \na \ps) \fn{\na^2\vp}$. By using integration by parts once, it can be written as 
\begin{align*}
& \na  (\na \vp \na \ps) \fn{\na^3\vp} \quad \mbox{and } \quad \na  (\na \vp \na \ps) {\na^2\vp}
{(\frac{1}{N})'} .
\end{align*} So by assuming   $M(t)$ small enough, these terms are estimated by
\begin{align*}
&   C\int (|\na^2 \vp|| \na \ps|+|\na \vp ||\na^2 \ps|) \fn{|\na^3\vp|} \leq
C\sqrt{M(t)}\int (|\na^2 \vp| + |\na^2 \ps|) \fn{|\na^3\vp|}\\
&\leq  C   \|\frac{\na^2 \vp}{\sqrt{N}}\|^2+ C \sqrt{M(t)}\|\frac{\na^2 \ps}{\sqrt{N}}\|^2  +\frac{1}{8}\|\frac{\na^3\vp}{\sqrt{N}}\|^2
\end{align*} and

\begin{align*}
&   C\int(|\na^2 \vp ||\na \ps|
+|\na \vp|| \na^2 \ps|){\fn{|\na^2\vp|}}\leq C  \|\frac{\na^2 \vp}{\sqrt{N}}\|^2+ C \sqrt{M(t)}\|\frac{\na^2 \ps}{\sqrt{N}}\|^2.
\end{align*} Note that we  used $M(t)$ for   $ \| \na\vp\|^2_{L^{\infty}} \le M(t)$ and  $ \|\na \ps\|^2_{L^{\infty}} <M(t)$.

 For the $\ep$-terms, 
 we can write them  symbolically:
$$ 
    \ep \int \na^2(\calP \ps_z) \na^2\psi  \quad \mbox{and } \quad
 \ep \int\nabla^2(|\na \ps|^2) \nabla^2\ps.
$$
After integration by parts, we can estimate them by
\begin{align*}
 & C\ep \int |\na(\calP \ps_z)| |\na^3\psi|\leq
 C\ep \int (|\na \ps| +|\na^2 \ps|) |\na^3\psi|
 \leq  C\ep ( \|\nabla\ps\|^2+ \|\nabla^2\ps\|^2) +\frac{\ep}{4}\|\nabla^3\ps\|^2 
\end{align*}
 and
\begin{align*} C\ep \int|\nabla(|\na \ps|^2)| |\nabla^3\ps|&\leq C\ep\int|\nabla\ps||\nabla^2\ps| |\nabla^3\ps|\\
&\leq C\ep  \sqrt{M(t)}\int |\nabla^2\ps| |\nabla^3\ps|\leq  C\ep 
 \|\nabla^2\ps\|^2 +\frac{\ep}{4}\|\nabla^3\ps\|^2.
\end{align*}
 
 Up to now, by Lemma \ref{lemma01_},   \eqref{higher_} becomes  \begin{align}\label{beforeclaims_lemma23_} &\int \fn{ |\na^2 \vp|^2} +\int |\na^2 \ps|^2 + \int_0^t \int \fn{ |\na^3 \vp|^2} +
 \ep\int_0^t \int{ |\na^3 \ps|^2}\\
 &\le C( \|  \vp_0\|_{2,w}^2  +
 \| \na \ps_0\|_{1,w}^2+ \| \ps_0\|^2) 
 +C \sqrt{M(t)}\int_0^t \int  \fn{|\na^2 \ps|^2}.  \nonumber 
 \end{align} This estimate is the second order version of  Lemma \ref{lemma1_0}.\\
 
 Now we claim the following two estimates which are the second order versions of Lemmas \ref{lemma1_1} and
 \ref{lemma1_2}:
 
 \begin{align} 
 \label{claim1_lemma23_}
& \int_0^t \int N |\na^2 \ps|^2 + \ep\int _0^ t \int   {|\na \Del \ps|^2 } \\&\qquad\qquad\qquad \qquad \le C \lr{   \|  \vp_0\|_{1,w}^2  +
 \| \na \ps_0\|_w^2+ \| \ps_0\|^2+\|\Del\ps_0\|^2} + C\sqrt{M(t)} \int_0^t  \int \fn{ |\na^2 \ps |^2}\nonumber \quad\mbox{ and} \\
& \int \fn {|\na^2  \ps|^2} +   \int _0^ t \int \fn {|\na^2 \ps|^2 } 
+ \ep\int _0^ t \int \fn {|\na^3 \ps|^2 } \le  C    ( \| \na \ps_0 \|_{1,w}^2 + \|  \ps _0\|^2 + \|  \vp_0\|_{1,w}^2) + C\int_0^t \int \fn{ |\na^3\vp|^2}. \label{claim2_lemma23_}
\end{align}
 As we did in Lemma \ref{lemma1_1} and
 \ref{lemma1_2}, our plan is to prove \eqref{claim1_lemma23_} first and to use the result in order to get \eqref{claim2_lemma23_}. Then we will close the estimate \eqref{beforeclaims_lemma23_} by using them. \\
 
 $\bullet$ Proof of \eqref{claim1_lemma23_}\\
 
Taking $ \dv$ to $\vp$ equation, we have
\begin{align*}
&\dv \vp_t - s \dv \vp_z - \Del \dv \vp \\
& = \quad N \Del \ps + \underbrace{ \na N \cdot \na\ps  +  \calP' \dv \vp + \calP ( \dv \vp)_z  
 +  \dv (\dv \vp \na\ps)}_{R_1 }.
\end{align*}
We multiply  $\Del \ps$ on the both sides 
to get
\begin{align*}
N |\Del \ps|^2 &=  ( \dv \vp_t - s\dv \vp_z - \Del \dv \vp) \Del \ps - \mbox{ R}_1\Del\ps\\
&= (\dv \vp \Del \ps )_t - \dv \vp \Del \ps_t - s \dv \vp_z \Del \ps -\underbrace{ \Del \dv \vp \Del \ps}_{(*)} -\mbox{ R}_1\Del\ps.
\end{align*}
For the second term $\dv \vp \Del \ps_t$, we use the $\psi$ equation (after taking $\Del$):
\[ \Del  \ps_t =  s\Del \ps_z  +\ep \Del\Del \ps +  \Del\lr{-2\ep  P \cdot\na \ps -\ep  |\na \ps|^2 }+ 
\Del  (\dv \vp)\] 
in order to get 
\begin{align*}
N |\Del \ps|^2  
&= (\dv \vp \Del \ps )_t - \dv \vp( s \Del \ps_z + \Del \dv \vp
) - s \dv \vp_z \Del \ps -(*)\\
&\quad-\mbox{ R}_1\Del\ps-\ep\dv \vp( 
 \Del\underbrace{\lr{\Del \ps -2   P \cdot\na \ps -   |\na \ps|^2 }}_{R_2}).
\end{align*}


 For $(*)$, we get
\begin{align*} \int(*)=&\int  \Del (\dv \vp) \Del \ps =\int  \Del (\ps_t - s \ps_z ) \Del \ps  -
 \int  \Del (\ep \Del \ps-2\ep  P \cdot\na \ps -\ep  |\na \ps|^2) \Del \ps\\
 & =\frac 12 \ddt \int |\Del \ps|^2  -
 \ep\int  \Del ( \Del \ps-2   P \cdot\na \ps -   |\na \ps|^2) \Del \ps
 =\frac 12 \ddt \int |\Del \ps|^2  -
 \ep\int  \Del R_2 \Del \ps.\end{align*}
 So, integrating on the strip, we have
\begin{align*}   \int N |\Del \ps|^2 &= \ddt \int \dv \vp \Del \ps + \int |\na  \dv \vp|^2 - \frac 12 \ddt \int |\Del \ps|^2 \\
 &  \quad - \int \mbox{ R}_1\Del\ps + \ep\int  \Del (  \mbox{R}_2 ) \Del \ps -\ep \int \dv \vp( 
 \Del\mbox{R}_2).\end{align*}
 
 We observe that 
 \begin{align*}
 \int_0^t\lr{\ddt \int \dv\vp \Del \ps - \frac 12 \ddt \int  |\Del\ps|^2}\leq C(\|\na\vp(t)\|^2 
 +\|\Del \ps_0\|^2 + \|\na \vp_0\|^2).&
 \end{align*} 
 
 The terms in $\int R_1\Del\psi$ are estimated as follows;
 \begin{align*}
& \int  | ( \na N \cdot \na\ps) \Del \ps |\le C\int \fn{ |\na\ps|^2 } + \frac 14 \int  N |\Del \ps|^2, \\
& \int |\calP' \dv \vp \Del \ps| +\int | \calP ( \dv \vp_z) \Del \ps| \le 
C \lr{ \int \fn{ |\na \vp|^2} + \int \fn{|\na^2 \vp|^2}} + \frac 18 \int N |\Del \ps|^2,\\
&\int |\dv (\dv\vp \na \ps ) \Del \ps | \le C ( \| \na \ps\|_{L^{\infty}} + \| \na \vp \|_{L^{\infty}})
 \lr{ C \int \fn{ |\na^2 \vp|^2} + C\int \fn{ |\Del \ps|^2} + \frac 14 \int N |\Del \ps|^2}\\
&\quad \qquad\qquad\qquad\qquad\leq C  
  \int \fn{ |\na^2 \vp|^2} + C\sqrt{M(t)}\int \fn{ |\Del \ps|^2} + \frac 14 \int N |\Del \ps|^2
 \end{align*} by assuming $\delta_0$ small enough.

For the $\ep$ terms,
we estimate them by
  \begin{align*} 
 & \ep\int \lr{ \Del (  \mbox{R}_2 ) \Del \ps -  \dv \vp( 
 \Del\mbox{R}_2)} \leq -\ep\int|\na\Del\ps|^2\\
 &\quad\qquad\qquad + \ep\int   \Del (  -2   P \cdot\na \ps -   |\na \ps|^2 ) \Del \ps - \ep\int  \dv \vp( 
 \Del\lr{\Del \ps -2   P \cdot\na \ps -   |\na \ps|^2 }) .
  \end{align*} For the second term and the third term, we estimate
  \begin{align*}
 &-\ep \int \Del(     2   P \cdot\na \ps+  |\na \ps|^2)\Del\ps
 =\ep \int \na(     2   P \cdot\na \ps+  |\na \ps|^2)\na\Del\ps\\
&\leq C\ep  \lr{\|\na\ps\|^2+\|\na^2\ps\|^2}+\frac{\ep}{4}\|\na \Del\ps\|^2\\
\end{align*} and 
\begin{align*}
&- \ep\int\dv\vp( \Del\lr{  \Del \ps  -2 ( P \cdot\na \ps) -  |\na \ps|^2 })
=\ep\int\na\dv\vp( \na\lr{  \Del \ps  -2 ( P \cdot\na \ps) -  |\na \ps|^2 })\\
&\leq  C\ep\int|\na^2\vp|(|\na\Del\ps|  +  |\na \ps| + |\na^2 \ps|+ |\na^2 \ps||\na\ps|)\\
&\leq \frac\ep 4\|\na\Del\ps\|^2 +C\ep  \|\na^2\vp\|^2+C\ep  \|\na\ps\|^2+C\ep  \|\na^2\ps\|^2.\\
\end{align*}
As a result, we get\begin{align*} 
 &\ep\int \lr{ \Del (  \mbox{R}_2 ) \Del \ps -  \dv \vp( 
 \Del\mbox{R}_2)} \leq -\frac \ep 2 \|\na\Del\ps\|^2 +C\ep  \|\na^2\vp\|^2+C\ep  \|\na\ps\|^2+C\ep  \|\na^2\ps\|^2.
  \end{align*}
Collecting the above estimates and using  Lemma \ref{lemma01_}, we have
 \begin{align*}\int_0^t \int N |\Del \ps|^2 +\ep\int_0^t \int  |\na\Del\ps|^2 \le &  C  ( \|  \vp_0\|_{1,w}^2  +
 \| \na \ps_0\|_w^2+ \| \ps_0\|^2+\|\Del\ps_0\|^2) \\
& + C\sqrt{M(t)} \int_0^t \int \fn{|\Del \ps|^2}.\end{align*}

To get \eqref{claim1_lemma23_} from the above estimate, we have to control 
$ \int N |\na^2\ps|^2$ by $\int N |\Del \ps|^2$ (possibly with lower order terms). 
Observe 
\begin{align*}&\int N |\Del \ps|^2 = \int N( (\pa_{zz}\ps)^2 +  (\pa_{yy}\ps)^2+  2\pa_{zz}\ps \pa_{yy}\ps)\\
&= \underbrace{\int N( (\pa_{zz}\ps)^2 +  (\pa_{yy}\ps)^2+  2(\pa_{zy}\ps)^2)}_{\int N |\na^2\ps|^2}
-2 \int N' \pa_z\ps \pa_{yy} \ps
\end{align*} 
 
and  
\[\Big| \int N' \pa_z \ps \pa_{yy} \ps \Big|= \Big|-\int(P+s) N \pa_z\ps \pa_{yy}\ps\Big|
\le \frac 14 \int N( \pa_{yy}\ps)^2
+ C\int N |\na\ps|^2.  \]  
Thus  we get
\begin{align} \label{la_1} \int N |\na^2\ps|^2
&\leq 2 \int N( (\pa_{zz}\ps)^2 + \frac 12 (\pa_{yy}\ps)^2+  2(\pa_{zy}\ps)^2)\\
&= 2\int N |\Del \ps|^2+4\int N' \pa_z \ps \pa_{yy} \ps -  \int N( \pa_{yy}\ps)^2 \nonumber\\
&\leq 2\int N |\Del \ps|^2  +  \int N( \pa_{yy}\ps)^2
+ C\int N |\na\ps|^2 -  \int N( \pa_{yy}\ps)^2\nonumber \\
&\leq 2\int N |\Del \ps|^2   
+ C\int N |\na\ps|^2.\nonumber   
  \end{align} 
 So we have
 \[ \int_0^t\int N |\na^2\ps|^2  \le 
   C ( \|  \vp_0\|_{1,w}^2  +
 \| \na \ps_0\|_w^2+ \| \ps_0\|^2)+ C\int_0^t\int N |\Del \ps|^2 \] 
by Lemma \ref{lemma01_}. Thus we proved \eqref{claim1_lemma23_}.\\

$\bullet$ Proof of \eqref{claim2_lemma23_}\\

Multiplying $ w \na^2 \ps$ to the equation $$ \na^2 \ps_t - s \na^2\ps_z -\ep \Del \na^2\ps = \na^2 (\dv \vp)
 -2\ep \na^2( P \cdot\na \ps) -\ep  \na^2(|\na \ps|^2), 
$$ we have
\begin{align*} 
&\frac 12 (w |\na^2 \ps|^2)_t - \frac s2 ( w |\na^2 \ps|^2)_z + \frac s2 w' |\na^2 \ps|^2 \\
&= w \na^2 (\dv \vp) \cdot \na^2 \ps 
+\underbrace{\ep w \na^2 \ps\cdot\lr{\Del \na^2\ps -2  \na^2( P \cdot\na \ps) -   \na^2(|\na \ps|^2)}}_{\ep- \mbox{terms}}.
\end{align*}
 
Recall that there exists a point $z_0\in\bbR$  such that
  \ben\begin{split}
&\frac{w'(z)}{w(z)}\geq \frac{s}{2}\quad \mbox{for } z\geq z_0\quad \mbox{and}\quad w(z)\leq \frac{4}{s^2}\leq \frac{16}{s^4} N \quad \mbox{for } z\leq z_0.
 \end{split}\een
 by  \eqref{rem_center}.

Integrating on each half strip 
(notation : $\int_{z>z_0}f:=\int_{z_0}^\infty\int_0^\la f (z,y,t)dy dz$)
and in time,  we get
\begin{align*}
\frac{1}{2}\int _{z>{z_0}} w |\na^2 \ps|^2 &
 \le  \frac{1}{2}\int_{z>{z_0}} w |\na^2 \ps_0|^2 + \int_0^t \int_{z>{z_0}} w \na^2 (\dv \vp) \cdot \na^2 \ps -
\frac{s^2}{4}\int_0^t \int _{z>{z_0}} w |\na^2 \ps|^2 \\&\quad -\frac s2\int_0^t  \int_0^\la w |\na^2 \ps|^2({z_0}, y) dy+\int_0^t \int_{z>{z_0}}{\ep\mbox{-terms}}\\
& \le \frac{1}{2} \int _{z>{z_0}} w |\na^2 \ps_0|^2 -\frac{s^2}{8} \int_0^t \int_{z>{z_0}} w |\na^2 \ps|^2 +
  C \int_0^t \int_{z>{z_0}} w |\na^2(\na \cdot \vp)|^2 \\&\quad -\frac s2\int_0^t  \int_0^\la w |\na^2 \ps|^2({z_0}, y) dy
  +\int_0^t \int_{z>{z_0}}{\ep\mbox{-terms}} 
  \end{align*} and
  \begin{align*}
\frac{1}{2} \int _{z<{z_0}} w |\na^2 \ps|^2 \le & \frac{1}{2} \int_{z<{z_0}} w |\na^2 \ps_0|^2 + \int_0^t \int_{z<{z_0}} w \na^2 (\dv \vp) \cdot \na^2 \ps \\&\quad
 +\frac s2\int_0^t  \int_0^\la w |\na^2 \ps|^2({z_0}, y) dy
 +\int_0^t \int_{z<{z_0}}{\ep\mbox{-terms}}\\
&\le \frac{1}{2}\int_{z<{z_0}} w |\na^2 \ps_0|^2 + C\int_0^t \int_{z<{z_0}}  | \na^3 \vp|||\na^2 \ps|  \\&\quad
  +\frac s2\int_0^t  \int_0^\la w |\na^2 \ps|^2({z_0}, y) dy
   +\int_0^t \int_{z<{z_0}}{\ep\mbox{-terms}}\\
  &\le  \frac{1}{2}\int_{z<{z_0}} w |\na^2 \ps_0|^2 + C\int_0^t \int_{z<{z_0}}  N|\na^2 \ps|^2  
  + C\int_0^t \int_{z<{z_0}}  { \fn{|\na^3 \vp|^2}} \\&\quad
  +\frac s2\int_0^t  \int_0^\la w |\na^2 \ps|^2({z_0}, y) dy
   +\int_0^t \int_{z<{z_0}}{\ep\mbox{-terms}}.
\end{align*}
As in  the proof of Lemma \ref{lemma1_2}, we get
\begin{align*}
&\frac{1}{2}\int w |\na^2 \ps|^2 + \frac{s^2}{8} \int_0^t \int w |\na^2 \ps|^2 \le \frac{1}{2}\int w |\na^2 \ps_0|^2
+ C\int_0^t \int  \Big(N  |\na^2\ps|^2 + 
  w |\na^3 \vp|^2  +  {\ep\mbox{-terms}}\Big).
\end{align*}

For the $\ep${-terms}, as before, we estimate

\begin{align*}
&\int {\ep\mbox{-terms}}=\ep\int { w \na^2 \ps\cdot\lr{\Del \na^2\ps -2  \na^2( P \cdot\na \ps) -   \na^2(|\na \ps|^2)}}\\
&=\ep\int { \Big(-w|\na^3\ps|^2 -w' \na^2 \ps\cdot  \na^2\ps_z
- w \na^2 \ps\cdot\lr{  2  \na^2( P \cdot\na \ps)} 
+  
(w'\na\psi_z+w\Del\na\ps)
\na(|\na \ps|^2)}\Big)\\
&\leq-\ep\int   w|\na^3\ps|^2 \\&\,\, +C\ep\int \Big(w |\na^2 \ps|| \na^3\ps| + w| \na^2 \ps|( |\na^3\ps|+|\na^2\ps|+|\na\ps| ) 
+w|\na \ps|( |\na^2 \ps|    |\na^2 \ps| +|\na^3\ps| |\na^2 \ps|)\Big)\\
&\leq-\frac \ep 4\int   w|\na^3\ps|^2 +C\ep\int w |\na^2 \ps|^2 +C\ep\int w |\na \ps|^2 
\end{align*} where we used the estimate $|w'|\leq C|w|$ and for the last inequality, we assumed 
$\delta_0$ small enough.\\

Collecting the above estimates, and using Lemma \ref{lemma01_} and the previous claim \eqref{claim1_lemma23_}, we have 
\begin{align*}
&\frac{1}{2}\int w |\na^2 \ps|^2 +   (\frac{s^2}{8}-C(\ep_0+\sqrt{\delta_0})) \int_0^t \int w |\na^2 \ps|^2 + \frac\ep4 \int_0^t\int   w|\na^3\ps|^2 \\
&\le C ( \| \na \ps_0\|_{1,w}^2 + \| \ps_0\|^2 + \| \vp_0\|_{1,w}^2) 
+ C \int_0^t \int \fn{ |\na^3 \vp|^2} . 
\end{align*}

  Then, by making $\ep_0$ and $\delta_0$ small enough, it proves the claim \eqref{claim2_lemma23_}.\\

Now we are ready to finish this proof for Lemma \ref{lemma23_} for the second order (the case $k=2$).
Plugging \eqref{claim2_lemma23_}  into \eqref{beforeclaims_lemma23_} with   small $\delta_0$, we  have
\begin{align*}
 &\| \na^2\vp\|_w^2 + \| \na^2\ps\|^2 + \int_0^t \| \na^3 \vp\|_w^2  + \ep \int_0^t\|\na^3\ps\|^2 \\
 &\le  C ( \| \na \ps_0\|_{1,w}^2 + \| \ps_0\|^2 + \| \vp_0\|_{2,w}^2). 
\end{align*}
In turns,  we have
\begin{align*}
\int \fn{ |\na^2 \ps|^2} + \int_0^t \int \fn{ |\na^2 \ps|^2} + 
 \ep\int_0^t \int \fn{ |\na^3 \ps|^2} \le C ( \|  \vp_0\|_{2,w}^2   + \| \na \ps_0\|_{1,w}^2
+ \| \ps_0\|^2).
\end{align*}   This proves Lemma \ref{lemma23_} for case $k=i+j=2$.

 \begin{remark}

Together with Lemma \ref{lemma01_}, we have proved 
\begin{align}\label{lemma2_} &
\|  \vp\|_{2,w}^2  +
 \| \na \ps\|_{1,w}^2+ \| \ps\|^2
+  \int_0^t  \sum_{l = 1,2,3} \| \na^{l} \vp\|_w^2
+\int_0^t   \sum_{l = 1,2}\| \na^l \ps\|_w^2
+\ep\int_0^t  \sum_{l = 1,2,3} \| \na^{l} \ps\|_w^2\\
 &\le C( \|  \vp_0\|_{2,w}^2  +
 \| \na \ps_0\|_{1,w}^2+ \| \ps_0\|^2). \nonumber
\end{align}
  \end{remark}
 \ \\

$\bullet$ Case $ k=i + j =3$\\
\ \\
For $k=3$, we present its proof for completeness even if  there is almost no new idea.  First, we recall the equation \eqref{higher_}. As before, we estimate
\begin{align*}
&\frac 12 \int |\na^3\vp|^2 \lr{ \fn{1}}''-\frac s 2 \int |\na^3\varphi|^2 \left( \fn{1}\right)'
\le  \frac{1}{8}\int  |\na^4\vp |^2\left( \fn{1}\right)+C \int |\na^3\varphi|^2  \left( \fn{1}\right). 
 \end{align*}
Observe that the quadratic terms are symbolically
\begin{align*}
 \fn{N^{(l)}} \na\na^{3-l} \ps \na^3 \vp  \quad \mbox{ for } l=1,2,3 \quad \mbox{ and }\\
 \fn{P^{(l)}} \dv(\na^{3-l} \vp) \na^3 \vp \quad \mbox{ for } l=0,1,2,3.\\
\end{align*}

 After integration by parts, the terms with $l=3$ are bounded by
\begin{align*}
   \Big|\int \fn{N'''}\na\ps\cdot\vp_{zzz} \Big|
&\leq C\int \Big[ \Big|\fn{N''}\Big||\na\ps||\na^3\vp|+\Big|\fn{N''}\Big||\na^2\ps||\na^3\vp|+\Big|\fn{N''}\Big||\na\ps||\na^4\vp|\Big]\\
&\leq C\Big( 
\|\fnn{\na \ps}\|^2+\|\fnn{\na^3 \vp}\|^2+\|\fnn{\na^2 \ps}\|^2
\Big)+\frac{1}{8}\|\fnn{\na^4 \vp}\|^2 ,
 \end{align*} 
\begin{align*}
   \Big|\int \fn{\PP'''}(\dv\vp)(\vp^1)_{zzz}\Big|
&\leq C \int \Big[ \Big|\fn{\PP''}\Big||\na \vp||\na^3\vp|
+\Big|\fn{\PP''}\Big||\na^2 \vp||\na^3\vp|+\Big|\fn{\PP''}\Big||\na \vp||\na^4\vp|\Big]\\
&\leq C\Big( 
\|\fnn{\na \vp}\|^2+\|\fnn{\na^3 \vp}\|^2+\|\fnn{\na^2 \vp}\|^2
\Big)+\frac{1}{8}\|\fnn{\na^4 \vp}\|^2.
 \end{align*} 
 
All the other quadratic  terms   are estimated by 
\begin{align*}
&  C\int 
\Big[ 
\Big|\fn{N''}\Big|
| \na^2 \ps|| \na^3 \vp|
+ \Big|\fn{P''} \Big||\na^2 \vp ||\na^3 \vp|
 +\Big|\fn{P'} \Big||\na^3 \vp ||\na^3 \vp|\Big]\\
 &\leq  C \lr{
 \| \frac{\na^2\ps }{\sqrt{N}}\|^2
 +\| \frac{\na\vp }{\sqrt{N}}\|^2+\| \frac{\na^2\vp }{\sqrt{N}}\|^2 +\| \frac{\na^3\vp }{\sqrt{N}}\|^2},
\end{align*}

\begin{align*}
C\int \Big|\frac{P}{N}\Big|  |\na^4 \vp| |\na^3\vp|   
 \le
 \frac{1}{8} \|  \frac{\na^4\vp }{ \sqrt{N}} \|^2 +C\| \frac{\na^3\vp }{\sqrt{N}}\|^2
\end{align*}  and 
\begin{align*}
& C\Big|\int \fn{N'} \na^3 \ps \na^3 \vp\Big|\leq C \int  | \na^2 \ps| |\na^3 \vp|
+C\int   |\na^2 \ps ||\na^4 \vp|\\
 &\leq  C \|  {\na^2\ps } \|\lr{\|  {\na^3\vp } \|+\|  {\na^4\vp } \|}\leq 
 C \|  {\na^2\ps } \|^2+ C\|  {\na^3\vp } \|^2+\frac 1 8 \|  {\na^4\vp } \|^2
\end{align*} where we used integration by parts for the last estimate.

So by \eqref{lemma2_}, 
we have
\begin{align*} &\int \fn{ |\na^3 \vp|^2} +\int |\na^3 \ps|^2 + \int_0^t \int \fn{ |\na^4 \vp|^2} +
 \ep\int_0^t \int { |\na^4 \ps|^2}\\
 &\le C( \|  \vp_0\|_{3,w}^2  + \| \na^3 \ps_0\|^2+
 \| \na \ps_0\|_{1,w}^2+ \| \ps_0\|^2) \\
 & +\mbox{the cubic terms } +\mbox{the }\ep\mbox{-terms }.
 \end{align*}

The cubic terms are estimated by 
\begin{align*} C\left|\int \na^3 (\na \vp \na \ps) \fn{ \na^3 \vp} \right|
&\le C\left|  \int  \na^2 ( \na \vp \na \ps) \lr{ \fn{\na^4 \vp} +  {\na^3 \vp}(\frac{1}{N})'} \right |\\
&\le C\left|  \int  \na^2 ( \na \vp \na \ps)   \fn{\na^4 \vp}   \right |
+C\left|  \int  \na^2 ( \na \vp \na \ps)   {\na^3 \vp}(\frac{1}{N})'  \right |.
 \end{align*}
From  $\left|(\frac{1}{N})'\right|+\left|(\frac{1}{N})''\right|\leq \frac{C}{N}$ (Lemma \ref{lem_NP}), we can estimate each term. Indeed, recall $$\|\na\ps\|_{L^\infty}+
\|\na\vp\|_{L^\infty}\leq C\sqrt{M(t)}.$$ So we get
\begin{align*}
&C\left| \int \na^3 \vp \na \ps \fn{ \na^4\vp}\right| + C\left|\int \na \vp \na^3 \ps \fn{ \na^4 \vp}\right|
\le  C\sqrt{M(t)} \lr{ \int \fn{|\na^3\vp|^2} +   \fn{|\na^3 \ps|^2} }+\frac{1}{8}\int \fn{|\na^4 \vp|^2} ,\end{align*}
\begin{align*}
C\left|\int \na^3 \vp \na \ps  { \na^3\vp} (\frac{1}{N})' \right| + \left|\int\na \vp \na^3 \ps { \na^3 \vp}(\frac{1}{N})' \right|
&\le C \int |\na^3 \vp ||\na \ps |\fn{| \na^3\vp|}  + |\na \vp|| \na^3 \ps| \fn{ |\na^3 \vp|} \\
&\le  C\sqrt{M(t)} \lr{ \int \fn{|\na^3\vp|^2} +   \fn{|\na^3 \ps|^2} }\quad\mbox{and}
\end{align*}
\begin{align*}
& C\left|\int \na^2  \vp \na^2 \ps  {\na^3 \vp} (\frac{1}{N})' \right|\\&\,\,
\leq C\left|\int \na^3  \vp \na \ps  {\na^3 \vp} (\frac{1}{N})'\right| +C\left|
\int \na^2  \vp \na  \ps  {\na^4 \vp} (\frac{1}{N})'  \right|
+C\left|\int \na^2  \vp \na  \ps  {\na^4 \vp} (\frac{1}{N})''\right| \\
&\,\,\le C   \int \Big[|\na^3 \vp|| \na \ps |\fn{|\na^3\vp|} +  |\na^2 \vp ||\na\ps| \fn{|\na^4 \vp|} \Big]
\le C \sqrt{M(t)} \lr{  
 \int \fn{|\na^3\vp|^2} +   \fn{|\na^2 \vp|^2}
} +  \frac{1}{8} \int \fn{|\na^4\vp|^2}.
\end{align*}
Then the  term $ \int \na^2\vp \na^2 \ps \fn{\na^4 \vp}$ remains. Note that by the Sobolev embedding, 
$$\|f\|_{L^4}\leq C(\|f\|_{L^2}+\|\na f\|_{L^2}).$$
So we  estimate
\begin{align*}
&C\Big|\int \na^2\vp \na^2 \ps \fn{\na^4 \vp}\Big|  
\le C \|    {\na^2 \ps} \|_{L^4}
\|  \frac {\na^2 \vp}{\sqrt  N}\|_{L^4}\| \|  \frac {\na^4 \vp}{\sqrt N}\|_{L^2} \\
& \le \frac 18 \|  \frac {\na^4 \vp}{\sqrt N}\|^2_{L^2} + 
C    \lr{ \|  \na\left( \frac {\na^2 \vp}{\sqrt  N}\right)\|_{L^2} +\|  \frac {\na^2 \vp}{\sqrt  N}\|_{L^2}}^2 
\cdot \lr{ \|      {\na^3 \ps}  \|_{L^2} +\|    {\na^2 \ps} \|_{L^2}}^2\\
& \le \frac 18 \|  \frac {\na^4 \vp}{\sqrt N}\|^2_{L^2} + 
C    \lr{ \|    \frac {|\na^3 \vp|}{\sqrt N}+|\na^2\vp|
{|(\frac{1}{\sqrt{N}})'|}\|_{L^2} +C\|  \frac {\na^2 \vp}{\sqrt N}\|_{L^2}}^2 
\cdot \lr{ \|      {\na^3 \ps}  \|_{L^2} +{\|    {\na^2 \ps} \|_{L^2}}}^2.\\
\end{align*}
Using ${|(\frac{1}{\sqrt{N}})'|}{\leq \frac{C}{\sqrt{N}}}$ (Lemma \ref{lem_NP}), we get
\begin{align*} & \lr{ \|    \frac {|\na^3 \vp|}{\sqrt N}+|\na^2\vp|
 {|(\frac{1}{\sqrt{N}})'|} \|_{L^2}
+C\|  \frac {\na^2 \vp}{\sqrt N}\|_{L^2}}^2 
\leq 
C \lr{\|    \frac {\na^3 \vp}{\sqrt N}  \|^2_{L^2} +\|  \frac {\na^2 \vp}{\sqrt N}\|_{L^2}^2}
\leq CM(t).
\end{align*}

 For the $\ep$-terms, we can write them  symbolically:
$$ 
    \ep \int \na^3(\calP \ps_z) \na^3\psi  \quad\mbox{ and }\quad
 \ep \int\nabla^3(|\na \ps|^2) \nabla^3\ps.
$$
After integration by parts, we can estimate these terms by
\begin{align*}
 & C\ep \int |\na^2(\calP \ps_z)| |\na^4\psi|\leq
 C\ep \int (|\na \ps| +|\na^2 \ps|+|\na^3 \ps|) |\na^4\psi|\\
 &\leq  C\ep ( \|\nabla\ps\|^2+ \|\nabla^2\ps\|^2+\|\nabla^3\ps\|^2) +\frac{\ep}{4}\|\nabla^4\ps\|^2\\
\end{align*}
 and
\begin{align*}&C\ep \int|\na^2(|\na \ps|^2)| |\na^4\ps|
\leq C\ep\int(|\nabla\ps||\nabla^3\ps|+|\nabla^2\ps||\nabla^2\ps|) |\nabla^4\ps|\\
&\leq C\ep \sqrt{M(t)}\int  |\nabla^3\ps| |\nabla^4\ps|+C\ep \|\na^2\ps\|^2_{L^4}\cdot\|\na^4\ps\|_{L^2} \\
&\leq  C\ep \sqrt{M(t)} \|\nabla^3\ps\|^2 +\frac{\ep}{8}\|\nabla^4\ps\|^2+C\ep\underbrace{(\|\na^3\ps\|^2_{L^2}
+\|\na^2\ps\|^2_{L^2})}_{\leq C M(t)}\cdot\|\na^4\ps\|_{L^2}\\
&\leq  C\ep \sqrt{M(t)} \|\nabla^3\ps\|^2 +\frac{\ep}{4}\|\nabla^4\ps\|^2
\end{align*} by assuming $\delta_0$ small enough.

 Collecting the above estimates and using \eqref{lemma2_}, we get
 the third order version of \eqref{beforeclaims_lemma23_}:
\begin{align}\label{beforeclaims34_lemma23_} &\int \fn{ |\na^3 \vp|^2} +\int |\na^3 \ps|^2 + \int_0^t \int \fn{ |\na^4 \vp|^2} +
 \ep\int_0^t \int{ |\na^4 \ps|^2}\\
 &\le C( \|  \vp_0\|_{3,w}^2  + \| \na^3 \ps_0\|^2+
 \| \na \ps_0\|_{1,w}^2+ \| \ps_0\|^2) 
 +C\sqrt{M(t)}\int_0^t \int  \fn{|\na^3 \ps|^2}.   \nonumber
 \end{align}  
 
 As before, we claim the following two estimates for the third order derivatives:
 \begin{align} \label{claim3_lemma23_}
&\int_0^t \int N |  \na^3 \ps|^2  +\ep\int_0^t \int  |\na^2\Del\ps|^2 \\&\le   C  ( \|  \vp_0\|_{2,w}^2  +
 \| \na \ps_0\|_{1,w}^2+ \| \ps_0\|^2+\|\na\Del\ps_0\|^2) 
+ C\sqrt{M(t)} \int_0^t \int \fn{|\na^3 \ps|^2},\nonumber\\
& \int \fn {|\na^3  \ps|^2} +   \int _0^ t \int \fn {|\na^3 \ps|^2 } 
+ \ep\int _0^ t \int \fn {|\na^4 \ps|^2 } \le  C    ( \| \na \ps_0 \|_{2,w}^2 + \|  \ps _0\|^2 + \|  \vp_0\|_{2,w}^2)
+ C\int_0^t \int \fn{ |\na^4\vp|^2}. \label{claim4_lemma23_}
\end{align}
We will prove \eqref{claim3_lemma23_} below and we will use the result in order to get \eqref{claim4_lemma23_}. Then we will apply \eqref{claim4_lemma23_} to close \eqref{beforeclaims34_lemma23_}. \\

$\bullet$ Proof of \eqref{claim3_lemma23_}\\

Taking $ D\dv$ to $\vp$ equation where $D$ is either $\partial_z$ or $\partial_y$, we have
\begin{align*}
&D\dv \vp_t - s D\dv \vp_z - \Del D\dv \vp \\
& = \quad N D\Del \ps + \underbrace{D N\Del\ps+D\Big(\na N \cdot\na\ps  + \dv P \dv \vp
+ P\cdot\na ( \dv \vp) + \dv ( \dv \vp \na \ps)\Big)}_{R_1 }.
\end{align*}

We multiply  $D\Del \ps$ on the both sides
to get
\begin{align*}
&N |D\Del \ps|^2 =  D( \dv \vp_t - s\dv \vp_z - \Del \dv \vp) D\Del \ps - \mbox{ R}_1D\Del\ps\\
&= (D\dv \vp D\Del \ps )_t - D\dv \vp D\Del \ps_t - s D\dv \vp_z D\Del \ps -\underbrace{ D\Del \dv \vp D\Del \ps}_{(*)} -\mbox{ R}_1D\Del\ps.
\end{align*}

For the second term $D\dv \vp D\Del \ps_t$, we use the $\psi$ equation (after taking $D\Del$):
\[ D\Del  \ps_t  =  sD\Del \ps_z  +\ep \Del D\Del \ps +  D\Del\lr{-2\ep  P \cdot\na \ps -\ep  |\na \ps|^2 }+ 
D\Del  (\dv \vp)\]
in order to get
\begin{align*}
 N |D\Del \ps|^2  
&= (D\dv \vp D\Del \ps )_t - D\dv \vp( s D\Del \ps_z + D\Del \dv \vp
) - s D\dv \vp_z D\Del \ps - (*)\\
&\quad-\mbox{ R}_1D\Del\ps -\ep D\dv \vp( 
 D\Del\underbrace{\lr{\Del \ps -2   P \cdot\na \ps -   |\na \ps|^2 }}_{R_2}).
\end{align*}
 We observe that
\begin{align*} \int(*)&=\int  D\Del (\dv \vp) D\Del \ps =\int  D\Del (\ps_t - s \ps_z ) D\Del \ps  -
 \int  D\Del (\ep \Del \ps-2\ep  P \cdot\na \ps -\ep  |\na \ps|^2) D\Del \ps\\
 & =\frac 12 \ddt \int |D\Del \ps|^2  -
 \ep\int  D\Del ( \Del \ps-2   P \cdot\na \ps -   |\na \ps|^2) D\Del \ps\\
&=\frac 12 \ddt \int |D\Del \ps|^2  -
 \ep\int  D\Del ( R_2) D\Del \ps .
 \end{align*}
 
  So, integrating on the strip, we have
\begin{align*}   \int N |D\Del \ps|^2 =& \ddt \int D\dv \vp D\Del \ps + \int |\na D  \dv \vp|^2 - \frac 12 \ddt \int |D\Del \ps|^2 \\
 &  - \int \mbox{ R}_1D\Del\ps  + \ep\int D \Del (  \mbox{R}_2 ) D\Del \ps -\ep \int D\dv \vp( 
 D\Del\mbox{R}_2).\end{align*}

  Note that 
 \begin{align*}
 \int_0^t\lr{\ddt \int D\dv\vp D\Del \ps - \frac 12 \ddt \int  |D\Del\ps|^2}\leq C\Big(\|\na^2\vp(t)\|^2 
 + \|D\Del \ps_0\|^2+\|\na^2\vp_0\|^2 \Big)&.
 \end{align*}

  The integral containing $R_1$ is estimated as follows;
  \\ The quadratic terms:
 \begin{align*}
 &C\Big|\int (DN)(\Del\ps )(D\Del\ps)\Big|\leq  C \int  |\Del \ps|^2 + \frac 18 \int N |D\Del \ps|^2,\\
 &C\Big|\int   D( \na N \cdot \na\ps) D\Del \ps\Big| \le C\int \fn{ |\na\ps|^2 +|\na^2\ps|^2} + \frac 18 \int  N |D\Del \ps|^2 \quad\mbox{and} \\
&C \Big|\int D(\calP' \dv \vp) D\Del \ps + D(\calP ( \dv \vp_z))D \Del \ps\Big| \le 
C \lr{ \int \fn{ |\na \vp|^2+|\na^2 \vp|^2+|\na^3 \vp|^2}  } + \frac 18 \int N |D\Del \ps|^2. 
 \end{align*}  The cubic term:
 \begin{align*}
&   C\Big|\int D(\dv (\dv\vp \na \ps ) )D\Del \ps \Big|\\& \le C ( \| \na \ps\|_{L^{\infty}} + \| \na \vp \|_{L^{\infty}})
 \lr{ C \int \fn{ |\na^3 \vp|^2} + C\int \fn{ |D\Del \ps|^2} + \frac 14 \int N |D\Del \ps|^2}
 + C\int |\na^2\vp||\na^2\ps||D\Del\ps|\\
& \leq C  
  \int \fn{ |\na^3 \vp|^2} + C\sqrt{M(t)}\int \fn{ |D\Del \ps|^2} + \frac 14 \int N |D\Del \ps|^2
   + C\int |\na^2\vp||\na^2\ps||D\Del\ps| 
 \end{align*} 
  for small $\delta_0$. The last term in the above can be estimated:
 
 \begin{align*}
&C\int |\na^2\vp||\na^2\ps||D\Del\ps|=C\int \frac{|\na^2\vp|}{\sqrt{N}}{|\na^2\ps|}\sqrt{N}|D\Del\ps|
\le C \|    {\na^2 \ps} \|_{L^4}
\|   \frac{\na^2 \vp}{\sqrt{N}}\|_{L^4}\| \| \sqrt{N} D\Del\ps\|_{L^2} \\
& \le \frac 18 \| \sqrt{N} D\Del\ps\|^2_{L^2} + 
C    \lr{ \|  \na\left( \frac {\na^2 \vp}{\sqrt  N}\right)\|_{L^2} +\|  \frac {\na^2 \vp}{\sqrt  N}\|_{L^2}}^2 
\cdot \lr{ \|      {\na^3 \ps}  \|_{L^2} +\|    {\na^2 \ps} \|_{L^2}}^2\\
& \le \frac 18 \| \sqrt{N} D\Del\ps\|^2_{L^2} + 
C    \underbrace{\lr{ \|    \frac {|\na^3 \vp|}{\sqrt N}+|\na^2\vp|
{|(\frac{1}{\sqrt{N}})'|}\|_{L^2} +C\|  \frac {\na^2 \vp}{\sqrt N}\|_{L^2}}^2}_{\leq CM(t)}
\cdot \lr{ \|      {\na^3 \ps}  \|_{L^2} +{\|    {\na^2 \ps} \|_{L^2}}}^2.
\end{align*}

Up to now, using  \eqref{lemma2_}, we have
 \begin{align*}&\int_0^t \int N |D\Del \ps|^2  \le   C  ( \|  \vp_0\|_{2,w}^2  +
 \| \na \ps_0\|_{1,w}^2+ \| \ps_0\|^2+\|D\Del\ps_0\|^2) 
+ C\sqrt{M(t)} \int_0^t \int \fn{|\na^3 \ps|^2}\\ &
+ \ep\int \lr{ D\Del (  \mbox{R}_2 ) D\Del \ps -  D\dv \vp( 
 D\Del\mbox{R}_2)} \end{align*}
where $R_2={\lr{\Del \ps -2   P \cdot\na \ps -   |\na \ps|^2 }}.$\\

 For $\ep$ terms,
we estimate them by
  \begin{align*} 
 &\ep\int \lr{ D\Del (  \mbox{R}_2 ) D\Del \ps -  D\dv \vp( 
 D\Del\mbox{R}_2)} \leq -\ep\int|\na D\Del\ps|^2\\
 &+ \ep\int   D\Del (  -2   P \cdot\na \ps -   |\na \ps|^2 ) D\Del \ps - \ep\int D\dv \vp( 
 D\Del\lr{\Del \ps -2   P \cdot\na \ps -   |\na \ps|^2 }) .
  \end{align*} For the last two terms,
  thanks to $$ {\|\na^2\ps\|^4_{L^4}} {\leq C(\|\na^2\ps\|+\|\na^3\ps\|)^4
\leq CM(t)(\|\na^2\ps\|+\|\na^3\ps\|)^2
},$$ we estimate
  \begin{align*}
 &-\ep \int\Del D(     2   P \cdot\na \ps+  |\na \ps|^2)D\Del\ps
 =\ep \int \na D(     2   P \cdot\na \ps+  |\na \ps|^2)\na D\Del\ps\\
&\leq C\ep  \lr{\|\na\ps\|^2+\|\na^2\ps\|^2+\|\na^3\ps\|^2
+ {\|\na^2\ps\|^4_{L^4}} }+\frac{\ep}{4}\|\na D\Del\ps\|^2\\
&\leq C\ep  \lr{\|\na\ps\|^2+\|\na^2\ps\|^2+\|\na^3\ps\|^2}+\frac{\ep}{4}\|\na D\Del\ps\|^2\\
\end{align*} and 
\begin{align*}
&- \ep\int D\dv\vp(  \Del D\lr{  \Del \ps  -2 ( P \cdot\na \ps) -  |\na \ps|^2 })
=\ep\int\na D\dv\vp( \na D\lr{  \Del \ps  -2 ( P \cdot\na \ps) -  |\na \ps|^2 })\\
&\leq  C\ep\int|\na^3\vp|\lr{|\na D\Del\ps|  +  |\na \ps| + |\na^2 \ps|+ |\na^3 \ps|+ |\na^3 \ps||\na\ps|+  |\na^2\ps|^2}\\
&\leq \frac\ep 4\|\na D\Del\ps\|^2 +C\ep  \|\na^3\vp\|^2+C\ep  \|\na\ps\|^2+C\ep  \|\na^2\ps\|^2 +C\ep  \|\na^3\ps\|^2
+C\ep
{ \|\na^2\ps\|_{L^4}^4}\\
&\leq \frac\ep 4\|\na D\Del\ps\|^2 +C\ep  \|\na^3\vp\|^2+C\ep  \|\na\ps\|^2+C\ep  \|\na^2\ps\|^2 +C\ep  \|\na^3\ps\|^2.
\end{align*} 
As a result, we get \begin{align*} 
  \ep\int \lr{ D\Del (  \mbox{R}_2 ) D\Del \ps -  D\dv \vp( 
 D\Del\mbox{R}_2)} \leq & -\frac \ep 2 \|\na D\Del\ps\|^2 \\&+C\ep \lr{ \|\na^3\vp\|^2+   \|\na\ps\|^2
 +   \|\na^2\ps\|^2+   \|\na^3\ps\|^2}. 
  \end{align*}

We use  \eqref{lemma2_} again to get
 \begin{align*}&\int_0^t \int N |\Del D \ps|^2 +\ep\int_0^t \int  |\na D\Del\ps|^2 \\&\le   C  ( \|  \vp_0\|_{2,w}^2  +
 \| \na \ps_0\|_{1,w}^2+ \| \ps_0\|^2+\|D\Del\ps_0\|^2) 
+ C\sqrt{M(t)} \int_0^t \int \fn{|\na^3 \ps|^2}.\end{align*} 

   We replace $D$ with $\partial_z$ and $\partial_y$ and add these two estimates to get
 \begin{align}\label{claim3_middle}&\int_0^t \int N |\Del \na \ps|^2  +\ep\int_0^t \int  |\na^2\Del\ps|^2 \\&\le   C  ( \|  \vp_0\|_{2,w}^2  +
 \| \na \ps_0\|_{1,w}^2+ \| \ps_0\|^2+\|\na\Del\ps_0\|^2) 
+ C\sqrt{M(t)} \int_0^t \int \fn{|\na^3 \ps|^2}.\nonumber \end{align}
To get \eqref{claim3_lemma23_} from the above estimate, we have to estimate 
$ \int N |\na^3\ps|^2=\int N |\na^2 (\na\ps)|^2$ from $\int N |\Del (\na \ps)|^2$ (possibly with lower order terms). 
 We apply the estimate \eqref{la_1} by replacing $\ps$ with $\na \ps$ then we get
\begin{align*} \int N |\na^2(\na\ps)|^2
&\leq 2\int N |\Del (\na\ps)|^2   
+ C\int N |\na(\na\ps)|^2.  
  \end{align*}  
 So we have
 \[ \int_0^t\int N |\na^3\ps|^2  \le 
 C\int_0^t\int N |\Del \na\ps|^2  + C ( \|  \vp_0\|_{2,w}^2  +
 \| \na \ps_0\|_{1,w}^2+ \| \ps_0\|^2) \] 
by \eqref{lemma2_}. Together with \eqref{claim3_middle}, it proves \eqref{claim3_lemma23_}.\\

$\bullet$ Proof of \eqref{claim4_lemma23_}\\

Multiplying $ w \na^3\ps$ to the equation $$ \na^3 \ps_t - s \na^3\ps_z -\ep \Del \na^3\ps = \na^3 (\dv \vp)
 -2\ep \na^3( P \cdot\na \ps) -\ep  \na^3(|\na \ps|^2), 
$$ we have
\begin{align*} 
&\frac 12 (w |\na^3 \ps|^2)_t - \frac s2 ( w |\na^3 \ps|^2)_z + \frac s2 w' |\na^3 \ps|^2 \\
&= w \na^3 (\dv \vp) \cdot \na^3 \ps 
+\underbrace{\ep w \na^3 \ps\cdot\lr{\Del \na^3\ps -2  \na^3( P \cdot\na \ps) -   \na^3(|\na \ps|^2)}}_{ \ep\mbox{-terms}  }.
\end{align*}

As before, we use the point $z_0\in\bbR$ satisfying
  \ben\begin{split}
&\frac{w'(z)}{w(z)}\geq \frac{s}{2}\quad \mbox{for } z\geq z_0\quad \mbox{and}\quad w(z)\leq \frac{4}{s^2}\leq \frac{16}{s^4} N \quad \mbox{for } z\leq z_0.
 \end{split}\een
 by  \eqref{rem_center}.  
 
Integrating on each half strip    and in time, we get
\begin{align*}
\frac{1}{2}\int _{z>{z_0}} w |\na^3 \ps|^2 &
 \le \frac{1}{2} \int_{z>{z_0}} w |\na^3 \ps_0|^2 + \int_0^t \int_{z>{z_0}} w \na^3 (\dv \vp) \cdot \na^3 \ps -
\frac{s^2}{4}\int_0^t \int _{z>{z_0}} w |\na^3 \ps|^2 \\&\quad -\frac s2\int_0^t  \int_0^\la w |\na^3 \ps|^2({z_0}, y) dy
+\int_0^t \int_{z>{z_0}}{\ep\mbox{-terms}}\\
& \le  \frac{1}{2}\int _{z>{z_0}} w |\na^3 \ps_0|^2 -\frac{s^2}{8} \int_0^t \int_{z>{z_0}} w |\na^3 \ps|^2 +
  C \int_0^t \int_{z>{z_0}} w |\na^3(\na \cdot \vp)|^2 \\&\quad  -\frac s2\int_0^t  \int_0^\la w |\na^3 \ps|^2({z_0}, y) dy
  +\int_0^t \int_{z>{z_0}}{\ep\mbox{-terms}}
  \end{align*} and
  \begin{align*}
\frac{1}{2} \int _{z<{z_0}} w |\na^3 \ps|^2 \le & \frac{1}{2}\int_{z<{z_0}} w |\na^3 \ps_0|^2 + \int_0^t \int_{z<{z_0}} w \na^3 (\dv \vp) 
 \cdot \na^3 \ps \\&\quad 
 +\frac s2\int_0^t  \int_0^\la w |\na^3 \ps|^2({z_0}, y) dy
 +\int_0^t \int_{z<{z_0}}{\ep\mbox{-terms}}\\
&\le \frac{1}{2}\int_{z<{z_0}} w |\na^3 \ps_0|^2 + C\int_0^t \int_{z<{z_0}}  | \na^4 \vp|||\na^3 \ps| 
 \\&\quad  +\frac s2\int_0^t  \int_0^\la w |\na^3 \ps|^2({z_0}, y) dy
  +\int_0^t \int_{z<{z_0}}{\ep\mbox{-terms}}\\
  &\le \frac{1}{2} \int_{z<{z_0}} w |\na^3 \ps_0|^2 + C\int_0^t \int_{z<{z_0}}  N|\na^3 \ps|^2  
  + C\int_0^t \int_{z<{z_0}}  { \fn{|\na^4 \vp|^2}}
 \\&\quad  +\frac s2\int_0^t  \int_0^\la w |\na^3 \ps|^2({z_0}, y) dy
   +\int_0^t \int_{z<{z_0}}{\ep\mbox{-terms}}.
\end{align*}  
As before, we get
\begin{align*}
&\frac{1}{2} \int w |\na^3 \ps|^2 + \frac{s^2}{8} \int_0^t \int w |\na^3 \ps|^2 \le \frac{1}{2} \int w |\na^3 \ps_0|^2
+ C\int_0^t \int \Big(N  |\na^3\ps|^2 + 
  w |\na^4 \vp|^2  +  {\ep\mbox{-terms}}\Big). 
\end{align*} 
For $\ep${-terms}, as before, we estimate
\begin{align*}
&\int {\ep\mbox{-terms}}=\ep\int { w \na^3 \ps\cdot\lr{\Del \na^3\ps -2  \na^3( P \cdot\na \ps) -   \na^3(|\na \ps|^2)}}\\
&=\ep\int \Big({ -w|\na^4\ps|^2 -w' \na^3 \ps\cdot  \na^3\ps_z
- w \na^3 \ps\cdot\lr{  2  \na^3( P \cdot\na \ps)} 
+  
(w'\na^2\psi_z+w\Del\na^2\ps)
\na^2(|\na \ps|^2)}\Big)\\
&\leq-\ep\int   w|\na^4\ps|^2 +C\ep\int w\ps_{zzz}\cdot\PP'''\ps_z  +C\ep\int \Big[w |\na^3 \ps|| \na^4\ps| + w| \na^3 \ps|( |\na^4\ps|+|\na^3\ps|+|\na^2\ps| 
 )
 \\ &\quad
+w |\na^3 \ps|   \lr{ |\na^3 \ps||\na \ps|+ |\na^2 \ps||\na^2\ps|}+w|\na^4\ps| \lr{ |\na^3 \ps||\na \ps|+ |\na^2 \ps||\na^2\ps|}\Big]\\
&\leq-\frac \ep 4\int   w|\na^4\ps|^2 +C\ep\int w \sum_{k=1}^3 |\na^k \ps|^2
+C\ep\int \Big(w |\na^3 \ps|   |\na^2 \ps|^2+w|\na^4\ps|   |\na^2 \ps|^2\Big). 
\end{align*}  For the last  term (the cubic term), we estimate
\begin{align*}
&C\ep\int \Big(w |\na^3 \ps|   |\na^2 \ps|^2+w|\na^4\ps|   |\na^2 \ps|^2\Big)
\le    C\ep\lr{\|\sqrt{w}\na^3\ps\|+\|\sqrt{w}\na^4\ps\|}\|\sqrt{w}\na^2\ps\|_{L^4}\| \na^2\ps\|_{L^4}\\
&\le   C\ep \lr{\|\sqrt{w}\na^3\ps\|+\|\sqrt{w}\na^4\ps\|}\lr{\||\sqrt{w}\na^3\ps|+\underbrace{|(\sqrt{w})'|}_{
\leq C\sqrt{w}}
|\na^2\ps|\|+\|\sqrt{w}\na^2\ps\|}(\| \na^3\ps\|+\| \na^2\ps\|)\\
&\le   C \ep \lr{\|\sqrt{w}\na^3\ps\|+\|\sqrt{w}\na^4\ps\|}\cdot\sqrt{ M(t)}\cdot
(\| \na^3\ps\|+\| \na^2\ps\|)\\
& \le \frac \ep 8\int   w|\na^4\ps|^2 +C\ep\int  w\sum_{k=2}^3 |\na^k \ps|^2
\end{align*} for small $\delta_0$.\\

 In sum, using the estimate \eqref{lemma2_} and the previous claim \eqref{claim3_lemma23_}, we obtain\begin{align*}
&\frac{1}{2}\int w |\na^3 \ps|^2 +  (\frac{s^2}{8}-C(\ep_0+\sqrt{\delta_0})) \int_0^t \int w |\na^3 \ps|^2+ \frac\ep 4  \int   w|\na^4\ps|^2 \\
&\le C  (\| \na \ps_0\|_{2,w}^2+
  \| \ps_0\|^2 + \| \vp_0\|_{2,w}^2
)
+ C \int_0^t \int \fn{ |\na^4 \vp|^2}. 
\end{align*} By making $\ep_0$ and $\delta_0$ small enough, we proved the claim \eqref{claim4_lemma23_}.\\

Now we can prove Lemma \ref{lemma23_} fully. Indeed
plugging \eqref{claim4_lemma23_}  into \eqref{beforeclaims34_lemma23_} with $\delta_0$ small, we  have
\begin{align*}
 &\| \na^3\vp\|_w^2 + \| \na^3\ps\|^2 + \int_0^t \| \na^4 \vp\|_w^2  + \ep \int_0^t\|\na^4\ps\|^2 \\
 &\le  C ( \| \na \ps_0\|_{2,w}^2 + \| \ps_0\|^2 + \| \vp_0\|_{3,w}^2). 
\end{align*}
So from \eqref{claim4_lemma23_},  we have
\begin{align*}
\int \fn{ |\na^3 \ps|^2} + \int_0^t \int \fn{ |\na^3 \ps|^2} + 
 \ep\int_0^t \int \fn{ |\na^4 \ps|^2} \le C ( \|  \vp_0\|_{3,w}^2   + \| \na \ps_0\|_{2,w}^2
+ \| \ps_0\|^2).
\end{align*}   This proves Lemma \ref{lemma23_} for the case $k=3$.
 \end{proof}
 
Finally, we obtain Proposition  \ref{uniform_}. Indeed, by
  adding  Lemma \ref{lemma01_} and Lemma \ref{lemma23_} for $k=2,3$, we have 
  \begin{equation*}\begin{split}
&  
M(T)
+ \int_0^T  \sum_{l = 1}^4 \| \na^{l} \vp\|_w^2
+\int_0^T   \sum_{l = 1}^3\| \na^l \ps\|_w^2
+\ep\int_0^T     \| \na^{4} \ps\|_w^2
 \le
 C M(0).
\end{split}\end{equation*}  

{\Large \section*{acknowledgement}}
The work of MC was supported by NRF-2018R1A1A3A04079376.
The work of KC was supported by NRF-2018R1D1A1B07043065 and by the POSCO Science Fellowship of POSCO TJ Park Foundation.

\end{document}